\DeclareMathOperator*{\esssup}{ess\,sup}
\DeclareMathOperator\supp{supp}
\newtheorem{lemma}{Lemma}[section]
\newtheorem{remark}{Remark}[section]
\numberwithin{equation}{section}
\newtheorem{theorem}{Theorem}[section]
\newtheorem{proposition}[theorem]{Proposition}
\title{Bochner-Riesz commutators for Grushin Operators}
\author[Md N. Molla, J. Singh]
{Md Nurul Molla \and Joydwip Singh} 
\address[Md N. Molla]{Department of Mathematics, Indian Institute of Science Education and Research Kolkata, Mohanpur--741246, West Bengal, India.}
\email{nurul.pdf@iiserkol.ac.in}
\address[J. Singh]{Department of Mathematics and Statistics, Indian Institute of Science Education and Research Kolkata, Mohanpur--741246, West Bengal, India.}
\email{js20rs078@iiserkol.ac.in}
\subjclass[2020]{42B15, 43A85}
\keywords{Commutators, Bochner-Riesz means, Grushin operator, Spectral multipliers, Compact operator}
\begin{document}

\begin{abstract}
In this paper, we study the boundedness of Bochner-Riesz commutator $$[b, S^{\alpha}(\mathcal{L})](f) = b S^{\alpha}(\mathcal{L})(f) - S^{\alpha}(\mathcal{L})(bf)$$ of a $BMO^{\varrho}(\mathbb{R}^d)$ function $b$ and the Bochner-Riesz operator $S^{\alpha}(\mathcal{L})$ associated to the Grushin operator $\mathcal{L}$ on $\mathbb{R}^d$ with $d:= d_1 +d_2$. We prove that for $1\leq p \leq \min \{2d_1/(d_1 +2), 2(d_2 +1)/(d_2+3)\}$ and  $\alpha > d(1/p - 1/2) - 1/2$, if $b \in BMO^{\varrho}(\mathbb{R}^d)$, then $[b, S^{\alpha}(\mathcal{L})]$ is bounded on $L^q(\mathbb{R}^d)$ whenever $p < q < p'$. Moreover, if $b \in CMO^{\varrho}(\mathbb{R}^d)$, then we show that $[b, S^{\alpha}(\mathcal{L})]$ is a compact operator on $L^q(\mathbb{R}^d)$ in the same range.
\end{abstract}

\maketitle

\section{Introduction}
The Bochner-Riesz operator $S^{\alpha}$ of order $\alpha \geq 0$ in $\mathbb{R}^n$ is defined by
\begin{align*}
    S^{\alpha}(f)(x) = \int_{\mathbb{R}^n} \left(1-|\xi|^2\right)_{+}^{\alpha} \widehat{f}(\xi) e^{2 \pi i x \cdot \xi} \ d\xi,
\end{align*}
where $(r)_{+} = \max\{r, 0\}$ for $r \in \mathbb{R}$, $f \in \mathcal{S}(\mathbb{R}^n)$, the Schwartz class functions in $\mathbb{R}^n$, and $\widehat{f}$ is its Fourier transform. Related to the summability of Fourier series, the $L^p$- boundedness of Bochner-Riesz operator is one of the most fundamental problem in harmonic analysis, and still remains open for $n \geq 3$ and $1 \leq p \leq \infty$. For $1\leq p \leq \infty$ and $p\neq 2$, it is conjectured that $S^{\alpha}$ is bounded on $L^p(\mathbb{R}^n)$ if and only if $\alpha> \alpha(p) = \max\left\{n\left|\frac{1}{p}-\frac{1}{2}\right|-\frac{1}{2}, 0 \right\}$. When $\alpha=0$, $S^{\alpha}$ is the ball multiplier operator and Fefferman showed that it is unbounded on $L^p(\mathbb{R}^n)$ for $n \geq 2$, except $p=2$. For $n=2$, the conjecture was shown to be true by Carleson and Sj\"olin. However, for $n \geq 3$ the conjecture is verified on a restricted range and remains open in general.

The Bochner-Riesz commutator $[b, S^{\alpha}]$ of a $BMO(\mathbb{R}^n)$ function $b$ and $S^{\alpha}$ is defined by
\begin{align*}
    [b, S^{\alpha}](f) = b S^{\alpha}(f) - S^{\alpha}(bf).
\end{align*}
The following results are well known in the literature regarding the $L^p$-boundedness of $[b, S^{\alpha}]$. If $\alpha \geq \frac{n-1}{2}$, then $[b, S^{\alpha}]$ is bounded on $L^p(\mathbb{R}^n)$ for $1<p<\infty$ (see \cite{Lu_Yan_Bochner_Riesz_Book_2013}). On $\mathbb{R}^n$ for $n\geq 2$, \cite{Hu_Lu_Bochner_Riesz_Commutator_1996} and \cite{Lu_Xia_Bochner_Riesz_Commutator_2007} proved that if $\frac{n-1}{2n+2}<\alpha<\frac{n-1}{2}$ then $[b, S^{\alpha}]$ is bounded on $L^p(\mathbb{R}^n)$ if and only if $\frac{2n}{n+1+2\alpha}<p< \frac{2n}{n-1-2\alpha}$. Uchiyama laid the foundation of the study of the compactness of commutators for some important class of operators in harmonic analysis. In his pioneering work \cite{Uchiyama_compactness_commutator_1978}, he characterized all locally integrable  functions $b$ on $\mathbb{R}^n$ such that for any $1 < p < \infty$ and  any Calder\'on-Zygmund operator $T$ with smooth kernel, $[b, T]$ is compact on $L^p(\mathbb{R}^n)$. The function $b$ turned out in the space $ CMO(\mathbb{R}^n)$ which is the closure in $BMO(\mathbb{R}^n)$ of the space of $C_c^{\infty}(\mathbb{R}^n)$ functions. Since then, a lot of attention has been paid to the compactness of many different operators in harmonic analysis, and  many remarkable results are known. Interested reader are refer to \cite{Duong_Li_Mao_Wu_yang_Compactness_Riesz_transform_2018}, \cite{Chen_Duong_Li_Wu_Compactness_Riesz_commutator_2019}, \cite{Tao_Yang_Yuan_Zhang_Compactness_commutator_2023} and the references therein for glimpse of compactness of some important operators. The compactness of Bochner-Riesz commutator in $\mathbb{R}^n$ has been studied in \cite{Bu_Chen_Hu_Compactness_commutator_2017}.

Beyond the Euclidean setup, the Bochner-Riesz operators and its commutator also have been studied by many authors. In case of Hermite operator see \cite{Chen_Lin_Yan_Commutator_Bochner_Riesz_Hermite_2023}. On metric measure space $X$ with certain assumptions, for a non-negative self-adjoint operator $L$ on $L^2(X)$, the boundedness of Bochner-Riesz operator $S^{\alpha}(L)$ has been studied by \cite{Chen_Ouhabaz_Sikora_Yan_Restriction_Estimate_Bochner-Riesz_2016}. In \cite{Bui_Commutators_Spectral_Multiplier_2012} the boundedness of $[b, S^{\alpha}(L)]$ was proved for $\alpha> \frac{n-1}{2}$ on $L^p(X)$ for all $1<p<\infty$. On the other hand, when $0< \alpha \leq \frac{n-1}{2}$, Chen et. al. proved the following result.
\begin{theorem}[\cite{Chen_Tian_Ward_Commutator_Bochner_Riesz_Elliptic_2021}, Theorem 1.1.]
    Assume that on $X$ ball volume is polynomial. Suppose that $L$ satisfies the finite speed propagation property and restriction type condition for some $1\leq p<2$. Let $b \in BMO(X)$. Then for all $\alpha> n(1/p-1/2)-1/2$, the $[b, S^{\alpha}(L)]$ is bounded on $L^q(X)$ for all $p<q<p'$.
\end{theorem}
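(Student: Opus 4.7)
The plan is to reduce the boundedness of $[b, S^{\alpha}(L)]$ to that of commutators with dyadic spectral pieces of $S^{\alpha}(L)$, each controlled via the restriction type condition together with the finite speed propagation property. First I would dyadically decompose the Bochner-Riesz multiplier near its singularity at $\lambda = 1$. Fix $\varphi \in C_c^{\infty}((1/2, 2))$ with $\sum_{j\geq 0}\varphi(2^j(1-\lambda^2)) = 1$ for $\lambda \in [0,1)$, and write
\begin{align*}
    S^{\alpha}(L) = m_0(\sqrt{L}) + \sum_{j\geq 1} m_j(\sqrt{L}),
    \qquad
    m_j(\lambda) = (1-\lambda^2)_+^{\alpha}\,\varphi\!\left(2^j(1-\lambda^2)\right).
\end{align*}
Each $m_j$ is supported in an interval of length $\sim 2^{-j}$ near $\lambda=1$, has sup-norm $\sim 2^{-j\alpha}$, and its natural rescaling is a fixed smooth bump. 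It therefore suffices to establish an $L^q\to L^q$ bound on $[b, m_j(\sqrt{L})]$ of order $2^{-j(\alpha - n(1/p-1/2)+1/2)}$, up to a polynomial-in-$j$ loss, since this sums under the hypothesis $\alpha > n(1/p-1/2)-1/2$.

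Second, I would bound each $m_j(\sqrt{L})$ using restriction plus finite speed propagation. The restriction type condition yields $L^p \to L^2$ estimates for smooth spectral multipliers of $\sqrt{L}$, and representing $m_j(\sqrt{L})$ via Fourier inversion in $\cos(t\sqrt{L})$ with a cutoff $|t|\lesssim 2^j$ localizes the kernel within distance $2^j$ by finite speed propagation. Combined with the polynomial ball volume, this yields the sharp $L^q \to L^q$ bound $\|m_j(\sqrt{L})\|_{q\to q} \lesssim 2^{-j(\alpha - n(1/p-1/2)+1/2)}$ for $p<q<p'$ (with the case $q=2$ handled directly by the spectral theorem and then interpolated), together with $L^2$ off-diagonal estimates on annuli of scale $2^j$.

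Third, for the commutator I would cover $X$ by balls of radius $\sim 2^j$ and on each ball $B$ write $b = (b-b_B)+b_B$, so that locally $[b, m_j(\sqrt{L})] = [b-b_B, m_j(\sqrt{L})]$. The kernel localization confines the effective contribution to a bounded dilate $cB$, and John-Nirenberg gives $\|(b-b_B)\mathbf{1}_{cB}\|_{L^s} \lesssim \|b\|_{BMO(X)}\, V(cB)^{1/s}$ for every $s<\infty$, with constants at most polynomial in $j$. Interleaving this with the $L^2$ off-diagonal estimates produces the desired commutator bound with only polynomial-in-$j$ loss. The main obstacle will be executing this gain without an extra geometric factor in $j$: since the restriction type condition gives only $L^2$ off-diagonal estimates rather than pointwise kernel decay, the classical Coifman-Rochberg-Weiss scheme is not directly available, and the BMO approximation must be coupled to the $L^2$ off-diagonal bounds via a careful annular decomposition. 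As a parallel route I would attempt to upgrade to a sparse $(p, p')$-form domination of $S^{\alpha}(L)$, which would yield the BMO commutator bound automatically, at the cost of proving sharp weak-type endpoint estimates near the critical $\alpha$.
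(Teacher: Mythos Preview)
This statement is quoted from \cite{Chen_Tian_Ward_Commutator_Bochner_Riesz_Elliptic_2021} and is not proved in the present paper; there is no proof here to compare against directly. The paper does, however, prove the Grushin analogue (Theorems~\ref{Theorem: Bochner-Riesz Commutator} and~\ref{Theorem: Multiplier for Commutator}) by adapting the CTW21 scheme, so one can compare your proposal to that.

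Your organization differs from the paper's in the choice of initial decomposition. You decompose the Bochner--Riesz symbol dyadically near its singularity $\lambda=1$, producing pieces $m_j$ supported in intervals of width $\sim 2^{-j}$. The paper instead proves a general multiplier--commutator theorem and decomposes on the \emph{Fourier-transform side} of the multiplier: writing $F=\sum_{l\ge 0}F^{(l)}$ with $\widehat{F^{(l)}}$ supported in $\{|\xi|\sim 2^l\}$, finite speed propagation gives \emph{exact} kernel support $\supp K_{F^{(l)}(\sqrt{L})}\subset\{d(x,y)\le 2^l\}$. One then covers $X$ by balls of radius $\sim 2^l$, writes $[b,F^{(l)}(\sqrt{L})]=(b-b_{B_j})F^{(l)}(\sqrt{L})-F^{(l)}(\sqrt{L})(b-b_{B_j})$ on each ball, and combines the restriction estimate with John--Nirenberg via H\"older. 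The sum in $l$ is controlled by $\|F\|_{W^2_\beta}$ with $\beta>n(1/p-1/2)$, and Bochner--Riesz follows since $(1-\lambda^2)_+^\alpha\in W^2_\beta$ iff $\alpha>\beta-1/2$.

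Your route is workable, but note a point you leave implicit: $m_j$ does not have compactly supported Fourier transform, so the kernel of $m_j(\sqrt{L})$ is not genuinely supported in $\{d(x,y)\lesssim 2^j\}$. The cutoff $|t|\lesssim 2^j$ you mention leaves a tail that must be estimated separately (by the rapid decay of $\widehat{m_j}$ at scale $2^j$); carrying this out essentially reproduces the paper's Fourier-side decomposition inside each $m_j$, with a further split into a compactly supported piece and a remainder handled via the bump $\psi$ and $(1-\psi)$ as in the paper. The paper's approach buys a clean one-parameter decomposition and a multiplier theorem of independent interest; yours is tailored to the specific symbol and requires managing two scales. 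Your alternative suggestion of sparse domination is a genuinely different strategy not pursued here.
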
 


In this paper, we are concern about the Grushin operator on $\mathbb{R}^d := \mathbb{R}^{d_1} \times \mathbb{R}^{d_2}$, $d_1, d_2 \geq 1$ with $d=d_1 +d_2$. The Grushin operator $\mathcal{L}$ is defined by
\begin{align*}
    \mathcal{L} &= -\Delta_{x'} - |x'|^2 \Delta_{x''} ,
\end{align*}
where $x=(x',x'')\in \mathbb{R}^{d_1} \times \mathbb{R}^{d_2}$, while $\Delta_{x'}$, $\Delta_{x''}$ are the Laplacian on $\mathbb{R}^{d_1}$, $\mathbb{R}^{d_2}$ respectively and $|x'|$ is the Euclidean norm of $x'$. The operator $\mathcal{L}$
is positive and essentially self-adjoint on $L^2(\mathbb{R}^d)$ but not elliptic on the plane $x' = 0$. More details can be found in \cite{Martini_Sikora_Grushin_Weighted_Plancherel_2012}, \cite{Martini_Muller_Multiplier_Grushin_2014}, \cite{Robinson_Sikora_Analysis_Grusin_Operator_2008} and references therein.

The spectral decomposition of Grushin operator is well known (see \cite{Bagchi_Garg_Grushin_2024}). For $f \in \mathcal{S}(\mathbb{R}^d)$ let $f^{\lambda}(x') = \int_{\mathbb{R}^{d_2}} f(x,x'') e^{-i \lambda \cdot x''} \ dx''$, denote the Fourier transform in the second variable $x''$. Then Bochner-Riesz means for Grushin operator is defined by
\begin{align*}
    S^{\alpha}(\mathcal{L}) f(x) &= \frac{1}{(2\pi)^{d_2}} \int_{\mathbb{R}^{d_2}} \sum_{k=0}^{\infty} \left(1-(2k+d_1)|\lambda| \right)_{+}^{\alpha} P_k^{\lambda}f^{\lambda}(x') e^{i \lambda \cdot x''}\ d\lambda ,
\end{align*}
where $P_k^{\lambda}$ denotes the projection of scaled Hermite operator onto the eigenspace associated to the eigenvalue $(2k+d_1)|\lambda|$, (see \cite{Jotsaroop_Sanjay_Thangavelu_Grushin_2013}).

As we have seen above that the Grushin operator is not elliptic, but it is sub-elliptic. For sub-elliptic operators, there are two types of dimensions of the underlying space: one is called the \emph{homogeneous dimension} and the other is the \emph{topological dimension}. And in general homogeneous dimension is strictly bigger than the topological dimension. In recent times, proving sharp spectral multipliers results  in terms of smoothness properties of the multiplier $F$ featuring topological dimension instead of homogeneous dimension of the underline space has been the subject of great interest. The genesis of this problem can be traced down to the seminal works of M\"uller and Stein \cite{Muller_Stein_Spectral_Multiplier-Heisenberg_1994} and  Hebisch \cite{Hebisch_Spectral_Multiplier_Heisenberg_1993} in the context of Heisenberg group. 

Like Heisenberg group, spectral multipliers and boundedness of Bochner-Riesz operators for Grushin operator have been studied recently by many authors. In \cite{Martini_Sikora_Grushin_Weighted_Plancherel_2012}, Martini and Sikora proved the H\"ormander-Mikhlin multiplier theorem for the Grushin operator and from that they deduced that if $\alpha> \max\{d_1+d_2, 2d_2\}/2-1/2$, then the Bochner-Riesz mean $S^{\alpha}(\mathcal{L})$ is bounded on $L^p(\mathbb{R}^d)$ for all $1\leq p \leq \infty$. After that Martini and M\"uller, \cite{Martini_Muller_Multiplier_Grushin_2014} improved the condition to $\alpha>\frac{d-1}{2}$, where $d=d_1 + d_2$ is the topological dimension, which is strictly smaller than the homogeneous dimension $Q=d_1+2d_2$ of $\mathbb{R}^d$. For  the case, $0< \alpha \leq \frac{d-1}{2}$ the scenario is quiet different. One cannot expect the result to hold for all $p \in [1, \infty]$. In this situation,  Chen and Ouhabaz, in \cite{Chen_Ouhabaz_Bochner-Riesz_Grushin_2016} proved the $p$-specific boundedness result for Grushin operator with smoothness indices $\alpha> \max\{d_1+d_2, 2d_2\}|1/p-1/2|-1/2 $ and recently which was improved by Niedorf in \cite{Niedorf_Grushin_Multiplier_2022}. He proved that for $1 \leq  p \leq \min{\{2d_1/(d_1 + 2), (2d_2 + 2)/(d_2 + 3)\}}$ and $\alpha >d|1/p-1/2|-1/2$, the Bochner–Riesz mean $S^{\alpha}(\mathcal{L})$ is bounded on $L^p(\mathbb{R}^d )$.

In this paper we study the $L^q$-boundedness and compactness of the Bochner-Riesz commutator associated to the Grushin operator $\mathcal{L}$. For $b\in BMO^{\varrho}(\mathbb{R}^d)$ (see (\ref{Definition of BMO})) and $f \in C_c^{\infty}(\mathbb{R}^d)$ the Bochner-Riesz commutator $[b, S^{\alpha}(\mathcal{L})]$  of Bochner-Riesz operators $S^{\alpha}(\mathcal{L})$ is defined by
\begin{align*}
    [b, S^{\alpha}(\mathcal{L})]f = b S^{\alpha}(\mathcal{L})f - S^{\alpha}(\mathcal{L})(bf).
\end{align*}
Note that if $f \in C_c^{\infty}(\mathbb{R}^d)$ and $b\in BMO^{\varrho}(\mathbb{R}^d)$, then $bf \in L^p(\mathbb{R}^d)$ for all $p<\infty$. The first main result of this paper is the following.
\begin{theorem}\label{Theorem: Bochner-Riesz Commutator}
    Let $1\leq p \leq \min\{2d_1/(d_1+2), 2(d_2+1)/(d_2+3) \} $ and $\alpha>d(1/p-1/2)-1/2$. Then for $b \in BMO^{\varrho}(\mathbb{R}^d)$ we have
    \begin{align*}
        \|[b, S^{\alpha}(\mathcal{L})]f\|_{L^q(\mathbb{R}^d)} &\leq C \|b\|_{BMO^{\varrho}(\mathbb{R}^d)} \|f\|_{L^q(\mathbb{R}^d)},
    \end{align*}
    for all $p<q<p'$.
\end{theorem}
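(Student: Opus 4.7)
The plan is to adapt the general framework of Chen--Tian--Ward (the theorem quoted above) to the Grushin setting, treating the statement as the combination of two ingredients already present in the literature plus an appropriate commutator decomposition. The key inputs I would verify for $\mathcal{L}$ are: finite speed of propagation for the wave equation $\partial_t^2 u + \mathcal{L} u = 0$, and the restriction-type estimate underlying Niedorf's Bochner--Riesz theorem (which is the source of the particular $p$-range $p \le \min\{2d_1/(d_1+2),\,2(d_2+1)/(d_2+3)\}$). Once these are in hand, the interaction with $BMO^{\varrho}(\mathbb{R}^d)$ provides the bridge to the commutator bound.

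First I would dyadically decompose the Bochner--Riesz multiplier,
\[
(1-s)^{\alpha}_{+} = \psi_{0}(s) + \sum_{j\ge 1} 2^{-j\alpha}\,\psi_{j}(s),
\]
where $\psi_j$ is a smooth bump concentrated near $s \approx 1-2^{-j}$ (so $\psi_j(\mathcal{L})$ is a frequency-localized piece at scale $2^{-j}$ near the boundary of the spectral ball). This reduces matters to proving a uniform-in-$j$ commutator estimate
\[
\|[b,\psi_{j}(\mathcal{L})]\|_{L^{q}\to L^{q}} \;\lesssim\; 2^{j\,\sigma(p)}\,\|b\|_{BMO^{\varrho}(\mathbb{R}^d)},
\]
with $\sigma(p) = d(1/p-1/2) - 1/2$, so that the geometric series $\sum_{j} 2^{-j\alpha}\cdot 2^{j\sigma(p)}$ converges precisely under the hypothesis $\alpha > d(1/p-1/2)-1/2$.

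The estimate on each dyadic piece I would obtain as follows. Using finite propagation speed one can replace $\psi_j(\mathcal{L})$ by an operator whose integral kernel is supported within Grushin-distance $\sim 2^{j/2}$ of the diagonal. Combined with the Niedorf restriction-type inequality, this yields an $L^p\to L^2$ bound on $\psi_j(\mathcal{L})$ with the right $2^{j\cdot(\cdots)}$ factor; by duality and interpolation with the trivial $L^2\to L^2$ estimate one gets corresponding $L^{q}\to L^{q'}$ control on the piece for $p<q<p'$. To pass from the plain operator norm to the commutator norm, I would decompose in Grushin-balls $B$ of radius $2^{j/2}$, write $b = (b-b_{B}) + b_{B}$ on each, absorb the constant part (which commutes), and estimate the kernel-convolution against $(b-b_{B})$ using John--Nirenberg for $BMO^{\varrho}$; the weighted $L^q$-integrability of $(b-b_{B})$ on Grushin-balls exactly matches the $L^p\to L^q$ bound available for $\psi_j(\mathcal{L})$.

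The main obstacle I anticipate is reconciling the Grushin quasi-metric geometry (which degenerates on $\{x'=0\}$ and where balls have a doubling dimension larger than $d$) with the fact that the smoothness threshold features the topological dimension $d$. Concretely, the restriction-type estimate of Niedorf is the only sharp tool producing the $d$-dependence rather than the $Q$-dependence, and one has to ensure that $BMO^{\varrho}$, defined through Grushin balls, nevertheless yields a John--Nirenberg control that is compatible with the topological-dimension scaling throughout the dyadic sum. Once that compatibility is established, the summation over $j$ is essentially automatic and the statement follows by density of $C_{c}^{\infty}(\mathbb{R}^d)$.
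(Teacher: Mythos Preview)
Your overall architecture is correct and matches the paper's: reduce to dyadic multiplier pieces, use finite propagation speed to localize to Grushin balls of the right scale, split $b = (b-b_B) + b_B$, and bring in Niedorf's restriction estimate to control each piece. The paper in fact deduces the theorem from a general commutator multiplier estimate (its \Cref{Theorem: Multiplier for Commutator}) by exactly this kind of reasoning.

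The genuine gap is in the sentence ``Combined with the Niedorf restriction-type inequality, this yields an $L^p\to L^2$ bound on $\psi_j(\mathcal{L})$ with the right $2^{j\cdot(\cdots)}$ factor.'' Niedorf's estimate is \emph{not} a clean $L^p\to L^2$ bound on $F(\sqrt{\mathcal{L}})$ with topological-dimension scaling. It comes in two forms (see the paper's \Cref{Theorem: Restriction Estimate}): either one further truncates dyadically along the spectrum of $T=(-\Delta_{x''})^{1/2}$, obtaining bounds on $F_M(\mathcal{L},T)$ with an extra factor $2^{-Md_2(1/p-1/2)}$, or one restricts the input to a ball $B(y,s)$ with $|y'|>4s$ (i.e.\ away from the degeneration plane). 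If you simply feed the localized piece $P_{\widetilde{B}_j}f$ into a global $L^p\to L^2$ estimate, the doubling geometry forces the homogeneous dimension $Q=d_1+2d_2$ into the exponent, and you end up proving the theorem only for $\alpha>Q(1/p-1/2)-1/2$; the paper's introduction says this explicitly.

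What the paper actually does, and what your proposal is missing, is a two-case analysis on each ball $B_j=B(x_j,2^l t)$: when $|x_j'|\ge 2^{l+2}t$ one can invoke the second form of Niedorf's estimate directly and the $|x_j'|$-powers cancel against the ball-volume factors to give $d$-scaling; when $|x_j'|<2^{l+2}t$ one writes $(\psi F^{(l)})(\sqrt{\mathcal{L}})=\sum_M F^{(l)}_M(\mathcal{L},T)$, splits the sum at $M=l$, and for $M\le l$ performs a further finer decomposition of $\widetilde{B}_j$ into product boxes $\widetilde{B}_{j,m}^M$ adapted to the $T$-scale $2^M$. The ``on-diagonal'' contributions are handled by the truncated restriction estimate, while the ``off-diagonal'' remainder requires a separate weighted Plancherel estimate in the $x''$-variable (the paper's \Cref{Theorem: Weighted Plancherel in second variable}). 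None of this machinery is visible in your sketch, and the John--Nirenberg/$BMO^{\varrho}$ step alone cannot convert $Q$-scaling into $d$-scaling. Your final paragraph correctly names the obstacle, but the resolution is not ``ensure compatibility'': it is this explicit case split plus the $T$-truncation and the second-layer decomposition.
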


Instead of the above result, we shall prove the following result which is slightly more general in nature. Let us use the standard notation $W^2_{\beta}(\mathbb{R})$ for the Sobolev space with norm $\|f\|_{W^2_{\beta}(\mathbb{R})} = \|(I-d^2/dx^2)^{\beta/2}f \|_{L^2(\mathbb{R})}$.
\begin{theorem}
\label{Theorem: Multiplier for Commutator}
    Let $1\leq p \leq \min\{2d_1/(d_1+2), 2(d_2+1)/(d_2+3) \}$ and $\beta>d(1/p-1/2)$. Suppose $b \in BMO^{\varrho}(\mathbb{R}^d)$. Then for any even Borel function $F$, with $\supp{F} \subseteq [-1,1]$ and $F \in W^2_{\beta}(\mathbb{R})$ we have
    \begin{align}\label{main inequality}
        \|[b, F(t \sqrt{\mathcal{L}})]f\|_{L^q(\mathbb{R}^d)} &\leq C \|b\|_{BMO^{\varrho}(\mathbb{R}^d)} \|F\|_{W^2_{\beta}(\mathbb{R})} \|f\|_{L^q(\mathbb{R}^d)},
    \end{align}
    for all $p<q<p'$ and uniformly in $t \in (0, \infty)$.
\end{theorem}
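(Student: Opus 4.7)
I would follow the framework developed by Bui \cite{Bui_Commutators_Spectral_Multiplier_2012} and Chen--Tian--Ward \cite{Chen_Tian_Ward_Commutator_Bochner_Riesz_Elliptic_2021}, adapted to the Grushin setting. The three core ingredients will be the sharp restriction-type estimate of Niedorf \cite{Niedorf_Grushin_Multiplier_2022}, the finite propagation speed of $\cos(s\sqrt{\mathcal L})$ in the Grushin control metric, and the weighted Plancherel identity of Martini--Sikora \cite{Martini_Sikora_Grushin_Weighted_Plancherel_2012}. First, I would decompose $F$ dyadically near the endpoints $\pm 1$ of its support: using an even smooth partition of unity $\{\phi_l\}_{l\geq 0}$ with $\supp\phi_l$ at distance $\sim 2^{-l}$ from $\{\pm 1\}$, write $F = \sum_{l\geq 0}F_l$ with $F_l := F\phi_l$. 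It would then suffice to show, for some $\epsilon>0$,
\begin{equation*}
\|[b, F_l(t\sqrt{\mathcal L})]f\|_{L^q(\mathbb R^d)} \leq C\, 2^{-\epsilon l}\,\|b\|_{BMO^{\varrho}(\mathbb R^d)}\|F\|_{W^2_\beta(\mathbb R)}\|f\|_{L^q(\mathbb R^d)}
\end{equation*}
uniformly in $t\in(0,\infty)$, and then sum the resulting geometric series.

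\textbf{Kernel localization and commutator analysis.} For each $F_l$, I would represent $F_l(t\sqrt{\mathcal L}) = (2\pi)^{-1}\int_{\mathbb R}\widehat{F_l}(\tau)\cos(t\tau\sqrt{\mathcal L})\,d\tau$ and truncate the $\tau$-integral at scale $2^l$. Finite propagation speed then forces the kernel of the main piece to be supported in $\{(x,y): d_{\mathcal L}(x,y)\leq c\,2^l t\}$, with the tail rapidly decaying thanks to the Sobolev regularity of $F_l$. Writing $[b, T_l]f(x)=\int K_l(x,y)(b(x)-b(y))f(y)\,dy$, I would split $b(x)-b(y)=(b(x)-b_B)+(b_B-b(y))$ where $B=B(x,c\,2^l t)$ is the Grushin ball matching the kernel's support. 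The first piece gives $(b(x)-b_B)T_l f(x)$, which is controlled via H\"older's inequality together with the $BMO^{\varrho}$-oscillation bound on $b$ (distinguishing small balls, where classical BMO-type estimates apply, from large balls where the boundedness built into $BMO^{\varrho}$ takes over). For the second piece, I would absorb $b_B-b(y)$ into the kernel and estimate using Niedorf's restriction-type bound combined with a John--Nirenberg inequality for $BMO^{\varrho}$, converting oscillation into controlled $L^s$-growth for any $s<\infty$.

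\textbf{Interpolation, summation, and the main obstacle.} The estimates above yield $L^p\to L^p$ and, by duality, $L^{p'}\to L^{p'}$ bounds for the commutator on each dyadic piece with the desired geometric decay in $l$; Marcinkiewicz interpolation then delivers the $L^q$ bound for all $p<q<p'$. Summing over $l$ closes the argument. I expect the chief difficulty to lie in the commutator analysis above: one must extract the geometric gain $2^{-\epsilon l}$ while respecting the anisotropy of the Grushin metric (the weighted Plancherel measure degenerates on $\{x'=0\}$) and while handling the \emph{localized} nature of $BMO^{\varrho}$, so that oscillation estimates on large balls ($2^l t\gtrsim \varrho$) must be treated differently from those on small balls. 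Matching the spatial scale $2^l t$ of the kernel support with the localization scale $\varrho$, while keeping Niedorf's restriction constant sharp, should be the most delicate part of the argument.
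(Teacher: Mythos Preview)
Two concrete problems. First, your decomposition $F=\sum_l F\phi_l$ with $\phi_l$ supported at distance $\sim 2^{-l}$ from $\{\pm1\}$ is the classical Bochner--Riesz splitting, and it only produces decay in $l$ because $(1-\eta^2)_+^\alpha$ happens to vanish at the boundary. For a general even $F\in W^2_\beta(\mathbb R)$ supported in $[-1,1]$ there is no reason the roughness of $F$ sits near $\pm1$, and your pieces $F_l$ carry no a priori decay in $l$ relative to $\|F\|_{W^2_\beta}$. The paper decomposes on the Fourier side instead: with $\eta_l$ a dyadic partition of the dual variable it sets $F^{(l)}(u)=(2\pi)^{-1}\int\eta_l(\xi)\widehat F(\xi)\cos(u\xi)\,d\xi$, which gives both $\|F^{(l)}\|_{L^2}\lesssim 2^{-\beta l}\|F\|_{W^2_\beta}$ and, via finite propagation speed, an exact support condition $\supp K_{F^{(l)}(t\sqrt{\mathcal L})}\subseteq\{\varrho(x,y)\leq 2^l t\}$ with no tail to handle. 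Second, you have misread $BMO^\varrho(\mathbb R^d)$: the superscript $\varrho$ denotes the Grushin control metric, not a localization parameter, so there is no ``large ball'' regime in which boundedness of $b$ takes over and no ``localization scale $\varrho$'' to match against $2^l t$. The John--Nirenberg inequality holds uniformly over all $\varrho$-balls and is used exactly in that form.

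Beyond these, your sketch does not contain the mechanism that forces the topological dimension $d$ rather than the homogeneous dimension $Q$ into the threshold. After covering $\mathbb R^d$ by $\varrho$-balls $B_j=B(x_j,2^l t)$ and writing the localized commutator as $\sum_j P_{4B_j}(b-b_{B_j})(\psi F^{(l)})(t\sqrt{\mathcal L})P_{\widetilde B_j}f$ plus the symmetric term, the paper splits according to whether $|x_j'|\geq 2^{l+2} t$ or $|x_j'|< 2^{l+2} t$. In the first (elliptic) region Niedorf's restriction estimate already yields the $d$-dependent exponent directly. In the degenerate region one must further truncate along the spectrum of $T=(-\Delta_{x''})^{1/2}$, writing $(\psi F^{(l)})(t\sqrt{\mathcal L})=\sum_M F^{(l)}_M(\mathcal L,T)$, and for the main range $0\leq M\leq l$ additionally decompose each $\widetilde B_j$ in the $x''$-direction into $\sim 2^{(l-M)d_2}$ sub-boxes of $x''$-sidelength $\sim 2^M t\cdot 2^l t$; the truncated restriction estimate and the weighted Plancherel estimate in the second variable then combine to cancel the factor $2^{ld_2(1/p-1/2)}$ that would otherwise replace $d$ by $Q$. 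This second-layer decomposition, which is the step going beyond \cite{Chen_Tian_Ward_Commutator_Bochner_Riesz_Elliptic_2021}, is the heart of the argument and is absent from your plan.
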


Define $CMO^{\varrho}(\mathbb{R}^d) := \overline{C_c^{\infty}(\mathbb{R}^d)}^{BMO^{\varrho}(\mathbb{R}^d)}$, that is the closer of compactly supported smooth function in $BMO^{\varrho}(\mathbb{R}^d)$. Regarding the compactness of $[b, S^{\alpha}(\mathcal{L})]$ we have the following  second main result of this paper.
\begin{theorem}
\label{Theorem: Compactness of Bochner-Riesz commutator}
    Let $1\leq p \leq \min\{2d_1/(d_1+2), 2(d_2+1)/(d_2+3) \}$ and $\alpha>d(1/p-1/2)-1/2$. If $b \in CMO^{\varrho}(\mathbb{R}^d)$, then the Bochner-Riesz commutator $[b, S^{\alpha}(\mathcal{L})]$ is a compact operator on $L^q(\mathbb{R}^d)$ for all $p<q<p'$.
\end{theorem}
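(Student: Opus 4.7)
The first step will be to exploit the density of $C_c^\infty(\mathbb R^d)$ in $CMO^\varrho(\mathbb R^d)$: pick $b_n\in C_c^\infty(\mathbb R^d)$ with $\|b-b_n\|_{BMO^\varrho}\to 0$ and apply Theorem \ref{Theorem: Bochner-Riesz Commutator} to the difference $b-b_n$, obtaining
\[
\|[b,S^\alpha(\mathcal L)]-[b_n,S^\alpha(\mathcal L)]\|_{L^q\to L^q}\leq C\|b-b_n\|_{BMO^\varrho}\to 0.
\]
Since compact operators form a closed subspace of $\mathcal B(L^q(\mathbb R^d))$ in the operator-norm topology, this reduces the theorem to proving compactness of $[b,S^\alpha(\mathcal L)]$ for each fixed $b\in C_c^\infty(\mathbb R^d)$.

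\textbf{Smoothing the multiplier.} I would next fix $\beta$ with $d(1/p-1/2)<\beta<\alpha+1/2$, which is admissible by the hypothesis $\alpha>d(1/p-1/2)-1/2$. Writing $F(r)=(1-r^2)^\alpha_+$, a dyadic decomposition near $|r|=1$ produces even functions $F_n\in C_c^\infty((-1,1))$ with $\|F-F_n\|_{W^2_\beta(\mathbb R)}\to 0$. Applying Theorem \ref{Theorem: Multiplier for Commutator} (with $t=1$) to the even, $[-1,1]$-supported $W^2_\beta$ function $F-F_n$ yields
\[
\|[b,S^\alpha(\mathcal L)]-[b,F_n(\sqrt{\mathcal L})]\|_{L^q\to L^q}\leq C\|b\|_{BMO^\varrho}\|F-F_n\|_{W^2_\beta}\to 0,
\]
so the remaining task will be to show that each $[b,F_n(\sqrt{\mathcal L})]$ is compact on $L^q(\mathbb R^d)$.

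\textbf{Compactness of the truncated commutators.} Let $K_n(x,y)$ denote the integral kernel of $F_n(\sqrt{\mathcal L})$. Since $F_n\in C_c^\infty(\mathbb R)$, standard weighted-Plancherel and finite-speed-propagation estimates for the Grushin operator guarantee that $K_n(x,\cdot)\in L^2(\mathbb R^d)$ with a bound locally uniform in $x$ (indeed $|F_n|^2(\sqrt{\mathcal L})$ has a continuous kernel whose diagonal value is $\|K_n(x,\cdot)\|_{L^2}^2$). The kernel of the commutator is $(b(x)-b(y))K_n(x,y)$, which by the compact support of $b$ is confined to $\{x\in\supp b\}\cup\{y\in\supp b\}$, and the bound
\[
\int\!\!\int|b(x)-b(y)|^2|K_n(x,y)|^2\,dx\,dy\leq 2\int|b(x)|^2\|K_n(x,\cdot)\|_{L^2}^2\,dx+2\int|b(y)|^2\|K_n(\cdot,y)\|_{L^2}^2\,dy
\]
places this kernel in $L^2(\mathbb R^d\times\mathbb R^d)$. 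Hence $[b,F_n(\sqrt{\mathcal L})]$ is Hilbert-Schmidt, and in particular compact, on $L^2(\mathbb R^d)$. Since Theorem \ref{Theorem: Multiplier for Commutator} also supplies uniform $L^p$ and $L^{p'}$ boundedness of $[b,F_n(\sqrt{\mathcal L})]$, Krasnoselskii's interpolation theorem for compact operators upgrades compactness on $L^2$ to compactness on $L^q$ for all $p<q<p'$.

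\textbf{Main obstacle.} The most delicate point I anticipate is checking the local uniform bound on $\|K_n(x,\cdot)\|_{L^2(\mathbb R^d)}$. The Grushin kernel estimates are naturally expressed in the sub-Riemannian control distance, whose balls exhibit two distinct polynomial volume-growth regimes (the topological dimension $d$ locally against the homogeneous dimension $Q=d_1+2d_2$ at large scales), and one must verify that the rapid off-diagonal decay of $K_n$ interacts correctly with these regimes over $\supp b$. Should this direct kernel route prove awkward — for instance because of the non-translation-invariance of $\mathcal L$ and the degeneracy at $x'=0$ — a natural backup is to bypass Hilbert-Schmidt and instead verify the Fréchet-Kolmogorov tail-decay and equicontinuity conditions directly for the image of the $L^q$-unit ball under $[b,F_n(\sqrt{\mathcal L})]$.
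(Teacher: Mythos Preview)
Your approach is correct and genuinely different from the paper's. Both start by reducing to $b\in C_c^\infty(\mathbb R^d)$ via density and the operator-norm bound of Theorem~\ref{Theorem: Bochner-Riesz Commutator}. From there the paths diverge. The paper does not smooth the multiplier; instead it keeps the dyadic Fourier-side decomposition $F=\sum_{l\ge 0}F^{(l)}$ already used in the proof of Theorem~\ref{Theorem: Multiplier for Commutator}, observes that \eqref{equation: inequality for F^l} forces $\sum_{l\le N}[b,F^{(l)}(\sqrt{\mathcal L})]\to[b,F(\sqrt{\mathcal L})]$ in operator norm, and then verifies the three Kolmogorov--Riesz conditions of Theorem~\ref{Theorem: Characterizations of compactness} for each $[b,F^{(l)}(\sqrt{\mathcal L})]$ by hand, using the pointwise kernel estimate of Proposition~\ref{Theorem: Weighted Plancherel with L infinity condition}, the mean-value Lemma~\ref{Theorem: Mean value theorem}, and a further splitting $F^{(l)}=\psi F^{(l)}+\sum_{\iota}\phi_\iota F^{(l)}$. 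Your route---approximate $F$ in $W^2_\beta$ by $F_n\in C_c^\infty((-1,1))$, show $[b,F_n(\sqrt{\mathcal L})]$ is Hilbert--Schmidt on $L^2$, then interpolate compactness via Krasnoselskii---is more conceptual and avoids the equicontinuity and tail computations entirely. The paper's method, on the other hand, stays self-contained (no interpolation-of-compactness black box) and reuses the decomposition machinery already built for the boundedness theorem.

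Two small points to tidy. First, Theorem~\ref{Theorem: Multiplier for Commutator} gives $L^q$ boundedness only for $p<q<p'$, not at $q=p$ or $q=p'$; but this is harmless for Krasnoselskii, since for any target $q\in(p,p')$ you can choose interpolation endpoints $q_0,q_1\in(p,p')$ with $q$ strictly between them and one of them equal to $2$ (note $2\in(p,p')$ always). Alternatively, since $F_n\in C_c^\infty$ and $b\in L^\infty$, the kernel bound from Proposition~\ref{Theorem: Weighted Plancherel with L infinity condition} shows $F_n(\sqrt{\mathcal L})$ is bounded on every $L^q$, $1\le q\le\infty$, and hence so is the commutator. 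Second, your anticipated obstacle---the local bound on $\|K_n(x,\cdot)\|_{L^2}$---dissolves immediately from the paper's toolkit: Proposition~\ref{Theorem: Weighted Plancherel with L infinity condition} applied to $|F_n|^2$ gives $\|K_n(x,\cdot)\|_{L^2}^2=K_{|F_n|^2(\sqrt{\mathcal L})}(x,x)\le C|B(x,1)|^{-1}$, which is bounded on $\supp b$.
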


\begin{remark}
    Notice that in both our results, the smoothness parameter $\alpha$ is expressed in terms of the topological dimension $d$, of the underlying space $\mathbb{R}^d$. It is also important to remark that the threshold $\alpha>d(1/p-1/2)-1/2$ is  coincides with that of the Bochner-Riesz conjecture for the Grushin operator, which is optimal and can not be decreased further (\cite{Niedorf_Grushin_Multiplier_2022}). The main novelty of our work is to reduce the dimension from $Q:=d_1+2d_2$, the homogeneous dimension of the space $\mathbb{R}^d$ to $d:=d_1+d_2$, the topological dimension of  $\mathbb{R}^d$. There has been extensive research on dimension reduction problem for multiplier \cite{Martini_Sikora_Grushin_Weighted_Plancherel_2012}, \cite{Martini_Muller_Multiplier_Grushin_2014}, \cite{Muller_Stein_Spectral_Multiplier-Heisenberg_1994}, \cite{Marini_Multiplier_polynomial_Growth_2012}, yet to the best of our knowledge, for commutators we are not aware of this kind of results in the literature.
\end{remark}

\subsection{Sketch of proof of the Theorems} Concerning $L^p$-boundedness of Bochner-Riesz commutator on the classical Euclidean space $\mathbb{R}^n$ for the Laplacian $\Delta= \sum_{i = 1} ^n \partial_{x_i} ^2$, or more generally in the context of metric measure space $X$ for self-adjoint elliptic operator $L$ \cite{Chen_Tian_Ward_Commutator_Bochner_Riesz_Elliptic_2021}, one of the crucial assumption was made that the ball volume is polynomial type. The first main difficulty in our settings is that, the volume is no longer of polynomial type but rather depends on the the radius as well as the centre of the ball. This requires us to distinguish two different cases and further careful analysis. Another essential aspect of their proof is the employment of the celebrated Stein-Tomas type restriction type condition: 
\begin{align*}
    \|dE_{\sqrt{L}} (\lambda)  \|_{p \rightarrow p'} \leq C \lambda^{n(1/p - 1/p') -1}, \quad \lambda >0, 
\end{align*}
for some $1 \leq p <2$. Even if such estimates are available, this only leads us to one of our cases with the smoothness parameter $\alpha > Q(1/p - 1/2)$ rather than $d(1/p - 1/2)$. To overcome this obstacle, one needs different idea.

One way to overcome this problem is to use weight $|x'|^{\gamma}$ in the first layer $x'$ as in \cite{Martini_Sikora_Grushin_Weighted_Plancherel_2012} with weighted restriction estimate (Theorem 3.4 of \cite{Chen_Ouhabaz_Bochner-Riesz_Grushin_2016}) and modify the arguments in \cite{Chen_Tian_Ward_Commutator_Bochner_Riesz_Elliptic_2021}. However, this approach will only give us smoothness indices $\alpha> \max\{d_1 +d_2, 2d_2\}/2-1/2$. Note that this result yields the smoothness indices $\alpha$ in terms of the topological dimension $d$ only if $d_1 \geq d_2$. Therefore as in \cite{Martini_Muller_Multiplier_Grushin_2014}, one can think of use weights $|x''|^{\gamma} (\gamma>0)$ in the second layer $x''$. But as pointed out in \cite{Niedorf_Grushin_Multiplier_2022}, the weights $|x''|^{\gamma}$ are difficult to handle due to the sub-elliptic estimate of Hebisch \cite{Hebisch_Spectral_Multiplier_Heisenberg_1993}, is not applicable for these weights. To tackle down the problem irrespective of all $d_1, d_2$, we use a weaker version of restriction type estimates Theorem \ref{Theorem: Restriction Estimate}, which is due to \cite{Niedorf_Grushin_Multiplier_2022},  where the operator $F(L)$ is further truncated dyadically  along the spectrum of $T$. Nevertheless, such an approach is limited to dealing with the error part of this additional truncation. In order to address the main part, the sub-Riemannian geometry of the underlying manifold is taken into account. This helps us to further split the main part into two summand. The negligible part of the summand is tackled through the weighted Plancherel estimates (Proposition \ref{Theorem: Weighted Plancherel in second variable}), whereas the significant part of the summand is more involved.

In Euclidean space, the compactness of the commutator of Bochner-Riesz multiplier is obtained by using  Fourier transform estimates, approximation to the identity, and some refined estimates obtained by C. Fefferman in \cite{Fefferman_Spherical_multiplier_1973}. To prove the compactness of Bochner-Riesz commutator $[b, F(\sqrt{\mathcal{L}})]$, we use Kolmogorov-Riesz compactness theorem established in \cite{Olsen_Holden_Kolmogorov_Riesz_Compactness_2010} (see \Cref{Theorem: Characterizations of compactness}). Unlike the Euclidean setting here we first decompose the commutator $[b, F(\sqrt{\mathcal{L}})]$ in the Fourier transform side of $F$ into sum of $[b, F^{(l)}(\sqrt{\mathcal{L}})]$ and this will reduce the problem to the compactness of the corresponding commutator $[b, F^{(l)}(\sqrt{\mathcal{L}})]$ for each $l \geq 0$, where we have used the crucial fact that norm limit of compact operators are again compact. This is an important observation of our paper. Now we need good control for each of these pieces in terms of $L^q$-norm. This amounts to establishing suitable $L^q$-estimates for $[b, F^{(l)}(\sqrt{\mathcal{L}})]$ (see equation (3.2)).

Throughout the article we  use standard notation. We use letter $C$ to indicate a  positive constant independent of the main parameters, but may vary from line to line. While writing estimates, we shall use the notation $f \lesssim g$ to indicate $f \leq Cg$ for some
$C > 0$, and whenever $f \lesssim g\lesssim f$ , we shall write $f \sim g$. Also, we write $f \lesssim_{\epsilon} g$ when the implicit constant $C$ may depend on a parameter like $\epsilon$. For a Lebesgue measurable subset $E$ of $\mathbb{R}^d$, we denote by $\chi_E$ the characteristic function of the set $E$. Also, for any ball $ B := B(x, r)$ with centered at $x$ and radius $r$, the notation $B(x, \kappa r)$ stands for the concentric dilation of $B$ by $\kappa >0$. Moreover, for a measurable function $f$, we denote the average of $f$ over $B$ as  $f_{B} = \frac{1}{|B|} \int_{B} f(x) \, dx$ and set $P_Bf(x) = \chi_B(x) f(x)$. For any function $G$ on $\mathbb{R}$, define $\delta_R G(\eta) = G(R \eta) $. Let $\mathcal{M}$ denote the Hardy-Littlewood maximal operator defined on $\mathbb{R}^d$ relative to the distance $\varrho$. 

The article is organized as follows.  In the next section, we collect some preliminary results. First, we recall some useful properties related to the Grushin operator $\mathcal{L}$. Next, we discuss about the truncated restriction type estimates, weighted Plancherel estimates and pointwise kernel estimate associated with $\mathcal{L}$. Moreover, we also recall the notion of the function space $BMO$ and some important properties of this space. Towards the end of this section we recall Kolmogorov-Riesz compactness theorem. Finally in Section 3 and 4, we give proof of our main results \Cref{Theorem: Bochner-Riesz Commutator}, \Cref{Theorem: Multiplier for Commutator} and \Cref{Theorem: Compactness of Bochner-Riesz commutator} of this paper.

\section{Preliminaries}\label{section preli}
In order to prove our results,  we will require some preliminary results. It will be convenient to gather them here first. On $\mathbb{R}^d := \mathbb{R}^{d_1} \times \mathbb{R}^{d_2}$, $d_1, d_2 \geq 1$ with $d=d_1 +d_2$ the Grushin operator $\mathcal{L}$ is defined by
\begin{align*}
    \mathcal{L} = - \sum_{j=1}^{d_1} \partial_{x'_j}^2 - \left(\sum_{j=1}^{d_1} |x'_j|^2 \right) \sum_{k=1}^{d_2} \partial_{x''_k}^2 .
\end{align*} Due to a celebrated theorem of H\"ormander \cite{Hormander_Hypoelliptic_Differential_Operator_1967}, the Grushin operator $\mathcal{L}$ is hypoelliptic second order differential operator with smooth coefficients. For these type of operators, there are notions of control distance available in the literature, for more details see \cite{Robinson_Sikora_Analysis_Grusin_Operator_2008}, \cite{Martini_Sikora_Grushin_Weighted_Plancherel_2012}. Let $\Tilde{\varrho}$ denote the control distance associated with the Grushin operator $\mathcal{L}$. Sometimes it is convenient to work with some explicit expression of the distance $\Tilde{\varrho}$, which is given by the following formula. For all $x,y \in \mathbb{R}^d$,
    \begin{align*}
    \Tilde{\varrho}(x,y) \sim \varrho(x,y) = |x'-y'| + \left\{ \begin{array}{ll}
       \frac{|x''-y''|}{|x'|+|y'|}  & \ \text{if}\ |x''-y''|^{1/2} \leq |x'|+ |y'| \\
      |x''-y''|^{1/2}   & \ \text{if}\  |x''-y''|^{1/2} \geq |x'|+ |y'| .
      \end{array} \right.
\end{align*}

On $\mathbb{R}^d$, we have a family of non-isotropic dilation $\{\delta_t\}_{t>0}$ defined by $\delta_t(x', x'')= (t x', t^2 x'') $. Then one can easily check that for all $t>0$,
\begin{align}
\label{Effect of dilation}
    \mathcal{L}(f \circ \delta_t) = t^2 (\mathcal{L}f) \circ \delta_{t} \quad \text{and} \quad \varrho(\delta_t x, \delta_t y) = t \varrho(x,y) .
\end{align}
Let $ B^{\varrho}(x,r) := \{y \in \mathbb{R}^d : \varrho(x,y) \leq r \}$ denote the $\varrho$-ball with center at $x \in \mathbb{R}^d$ and radius $r\geq 0$. Moreover, if $|B^{\varrho}(x, r)|$ denotes the Lebesgue measure of the $\varrho$-ball $B^{\varrho}(x,r)$, then
\begin{align}
   |B^{\varrho}(x, r)| \sim r^{d_1+ d_2}\, \max\{{r, |x'|}\}^{d_2}. 
\end{align}
In particular for all $\kappa \geq 0$ we have, 
\begin{align*}
    |B^{\varrho}(x, \kappa r)| \leq C (1 + \kappa)^{Q}\, |B^{\varrho}(x, r)|.
\end{align*}
Therefore $\mathbb{R}^d$ with the distance $\varrho$ and the Lebesgue measure $|\cdot|$ become a doubling metric measure space with ``homogeneous dimension" $Q:= d_1 + 2d_2$. We call $d= d_1 + d_2$ to be the ``topological dimension" of $\mathbb{R}^d$. Here onwards we omit the superscript $\varrho$ from $B^{\varrho}(x,r)$ when there is no confusion. If there is nothing in the superscripts we always mean the ball $B := B(x,r)$ is taken with respect to the distance $\varrho$.

If $|x'| \leq 4r$, then we have the following decomposition of the ball $B(x,r)$.
\begin{align}
\label{Decomposition of balls for small radius}
    B((x', x''),r) \subseteq B^{|\cdot|}(x', r) \times B^{|\cdot|}(x'', C r^2) ,
\end{align}
where $B^{|\cdot|}(x', r)$ denotes the ball of radius $r$ and centered at $x'$ with respect to Euclidean distance and $C>0$ is a constant. For the proofs of the above results see \cite{Martini_Sikora_Grushin_Weighted_Plancherel_2012}, \cite{Niedorf_Grushin_Multiplier_2022}.

We shall make use of the finite speed of propagation property for the corresponding wave operator $\cos{(t\sqrt{\mathcal{L}})}$ for the operator $\mathcal{L}$ that is, the kernel of the operator $\cos{(t\sqrt{\mathcal{L}})}$  satisfies
\begin{align*}
    \supp K_{\cos{(t\sqrt{\mathcal{L}})}} \subseteq \mathcal{D}_{t} := \{ (x, y) \in \mathbb{R}^d \times \mathbb{R}^d: \varrho(x, y)\leq t\}, \quad \text{for all} \quad t >0 .
\end{align*}
An immediate consequence of this result is following well-known lemma.
\begin{lemma}[\cite{Chen_Lin_Yan_Commutator_Bochner_Riesz_Hermite_2023}]
\label{lemma support of kernel}
    Let $F$ be an even bounded Borel function and $\widehat{F}\in L^1(\mathbb{R})$  with $\supp{\widehat{F}} \subset [-t, t].$ Then we have that 
     \begin{align*}
        K_{F(\sqrt{\mathcal{L}})} \subseteq \mathcal{D}_t  .
    \end{align*}
\end{lemma}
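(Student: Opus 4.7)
The plan is to reduce the claim to the finite speed of propagation property of $\cos(s\sqrt{\mathcal{L}})$ recalled just above the lemma, via Fourier inversion applied to $F$. Since $F$ is even and $\widehat{F}\in L^1(\mathbb{R})$, Fourier inversion yields, for all $\lambda\in\mathbb{R}$,
\begin{align*}
    F(\lambda) \;=\; \frac{1}{2\pi}\int_{\mathbb{R}} \widehat{F}(s)\,e^{is\lambda}\,ds \;=\; \frac{1}{2\pi}\int_{\mathbb{R}} \widehat{F}(s)\cos(s\lambda)\,ds,
\end{align*}
the second equality using evenness of $F$ (so that $\widehat F$ is even and the sine contribution cancels).

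Next, I would lift this scalar identity to the operator level through the functional calculus for the non-negative self-adjoint operator $\sqrt{\mathcal{L}}$. Since $\|\cos(s\sqrt{\mathcal{L}})\|_{L^2\to L^2}\leq 1$ uniformly in $s$ and $\widehat{F}\in L^1$, the Bochner integral
\begin{align*}
    F(\sqrt{\mathcal{L}}) \;=\; \frac{1}{2\pi}\int_{-t}^{t} \widehat{F}(s)\,\cos(s\sqrt{\mathcal{L}})\,ds
\end{align*}
converges in the strong operator topology on $L^2(\mathbb{R}^d)$, with the restriction to the interval $[-t,t]$ coming from the assumption $\supp\widehat{F}\subseteq[-t,t]$.

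Finally, I would pass to integral kernels. Interpreting both sides in the distributional (or Schwartz-kernel) sense and interchanging the order of integration (Fubini, justified by $\widehat{F}\in L^1$ and the $L^2$-bound for the wave propagator), one obtains
\begin{align*}
    K_{F(\sqrt{\mathcal{L}})}(x,y) \;=\; \frac{1}{2\pi}\int_{-t}^{t} \widehat{F}(s)\,K_{\cos(s\sqrt{\mathcal{L}})}(x,y)\,ds.
\end{align*}
By the finite speed of propagation property, $K_{\cos(s\sqrt{\mathcal{L}})}(x,y)=0$ whenever $\varrho(x,y)>|s|$. Thus for any pair $(x,y)$ with $\varrho(x,y)>t$, the integrand above vanishes for every $s\in[-t,t]$, so $K_{F(\sqrt{\mathcal{L}})}(x,y)=0$, which is exactly $\supp K_{F(\sqrt{\mathcal{L}})}\subseteq \mathcal{D}_t$.

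The only delicate point, and the one I would treat carefully, is the distributional interchange and the meaning of $K_{F(\sqrt{\mathcal{L}})}$ when $F$ is merely bounded Borel rather than, say, Schwartz; this is standard in the spectral-multiplier literature and can be handled either by testing against $C_c^\infty \times C_c^\infty$ functions or by a density/approximation argument using truncations of $\widehat{F}$.
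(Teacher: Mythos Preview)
Your argument is correct and is precisely the standard route implied by the paper: the paper does not spell out a proof but simply cites the lemma and calls it ``an immediate consequence'' of the finite speed of propagation property, which is exactly what you carry out via Fourier inversion on the even function $F$ and the representation $F(\sqrt{\mathcal{L}})=\frac{1}{2\pi}\int_{-t}^{t}\widehat{F}(s)\cos(s\sqrt{\mathcal{L}})\,ds$.
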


A key ingredient in proof of our \Cref{Theorem: Multiplier for Commutator} is the ``truncated restriction type estimate'' for the multiplier operator $ F(\mathcal{L})$. To be precise, we need to further truncate the multiplier $F$ along the spectrum of the Laplacian $T:= (-\Delta_{x''})^{1/2}$. To this end, let us set some notation first. 

Suppose $F : \mathbb{R} \to \mathbb{C}$ is a bounded Borel function supported in $ [R/8, 8R]$ for some $R>0$. Also, suppose $\Theta : \mathbb{R} \to [0,1]$ be an even $C_c^{\infty}$ function supported in $[-2,-1/2] \cup [1/2, 2]$ such that 
\begin{align}
\label{Definition: Cutoff function chi}
    \sum_{M \in \mathbb{Z}} \Theta_M(\kappa) =1 \quad \text{for}\quad \kappa \neq 0,
\end{align}
where $\Theta_M(\kappa) = \Theta(2^{-M} \kappa)$.  For $M \in \mathbb{N}$, let $F_M : \mathbb{R} \times \mathbb{R} \to \mathbb{C}$ be given by 
\begin{align*}
    F_M(\kappa, r) = F(\sqrt{\kappa}) \Theta_{M}(\kappa/r) \quad \text{for} \quad \kappa \geq 0, r \neq 0
\end{align*}
and $F_M(\kappa, r) = 0$ else.

Note that as shown in \cite{Martini_Sikora_Grushin_Weighted_Plancherel_2012}, \cite{Niedorf_Grushin_Multiplier_2022}, the joint functional calculus of the operators $\mathcal{L}$ and $T$ allows us to define the operator $F_M (\mathcal{L}, T)$ for every Borel function $F_M : \mathbb{R} \times \mathbb{R} \rightarrow \mathbb{C}$.

In the sequel, the following explicit formula for the integral kernel of the operator $F_M (\mathcal{L}, T)$ play some important roles. For more details, see Lemma 3.1 in \cite{Niedorf_Grushin_Multiplier_2022}.
\begin{proposition}
\label{reprsentation of kernel}
    Let $K_{F_M (\mathcal{L}, T)}$ be the integral kernel of the operator $F_M (\mathcal{L}, T)$. Then for almost all $x, y \in \mathbb{R}^n$,
\begin{align*}
K_{F_M (\mathcal{L}, T)} (x,y) = \frac{1}{(2\pi)^{d_2}}\, \int_{\mathbb{R}^{d_2}} e^{i(x'' - y'') \cdot \lambda} \sum_{k=0} ^{\infty} F_{M} ((2k+d_1)|\lambda|, |\lambda| )  \mathcal{K}_k^{\lambda}(x',y') \  d\lambda ,
\end{align*}
where $\displaystyle{\mathcal{K}_k^{\lambda}(x',y') = \sum_{|\mu| =k} \Phi_{\mu}^{\lambda}(x') \Phi_{\mu}^{\lambda}(y') }$.
\end{proposition}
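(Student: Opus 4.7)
The plan is to read off the kernel formula from the joint spectral decomposition of $\mathcal L$ and $T=(-\Delta_{x''})^{1/2}$, obtained by passing to the partial Fourier transform in the $x''$ variable. For $f \in \mathcal{S}(\mathbb R^d)$, writing $f^\lambda(x') = \int_{\mathbb R^{d_2}} f(x',x'') e^{-i\lambda\cdot x''}\,dx''$, a direct computation shows that $\mathcal L$ is intertwined with the scaled Hermite operator $H(\lambda) = -\Delta_{x'} + |\lambda|^2 |x'|^2$ acting on $L^2(\mathbb R^{d_1})$, while $T$ becomes multiplication by $|\lambda|$. The spectral decomposition of $H(\lambda)$ is classical: its eigenvalues are $(2k+d_1)|\lambda|$ for $k \in \mathbb N_0$, and the orthogonal projector onto the $k$-th eigenspace is the integral operator $P_k^\lambda$ with kernel $\mathcal K_k^\lambda(x',y') = \sum_{|\mu|=k}\Phi_\mu^\lambda(x')\Phi_\mu^\lambda(y')$.

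Since $\mathcal L$ and $T$ commute and are self-adjoint, they admit a joint spectral resolution, and the partial Fourier decomposition above diagonalises them simultaneously. Consequently, on the partial Fourier side the joint functional calculus gives
\begin{align*}
(F_M(\mathcal L, T)f)^\lambda(x') = \sum_{k=0}^\infty F_M((2k+d_1)|\lambda|,|\lambda|)\,(P_k^\lambda f^\lambda)(x').
\end{align*}
Substituting the definition of $f^\lambda$, inverting the partial Fourier transform in $\lambda$ to recover $F_M(\mathcal L, T)f(x)$, and then interchanging the order of integration and summation yields
\begin{align*}
F_M(\mathcal L, T)f(x) = \frac{1}{(2\pi)^{d_2}}\int_{\mathbb R^d}\!\Bigl[\int_{\mathbb R^{d_2}} e^{i(x''-y'')\cdot\lambda}\sum_{k=0}^\infty F_M((2k+d_1)|\lambda|,|\lambda|)\,\mathcal K_k^\lambda(x',y')\,d\lambda\Bigr]f(y)\,dy,
\end{align*}
from which the claimed kernel formula can be read off.

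The only real technical point is justifying the interchanges and giving pointwise a.e.\ sense to the bracketed expression, and here the support assumptions make things easy rather than hard. Since $F_M(\kappa,r)$ vanishes unless $\kappa \in [R/8,8R]$ and $\kappa/r \in [-2,-1/2]\cup[1/2,2]$, the variable $|\lambda|$ is pinned to a compact annulus, and for each admissible $\lambda$ only finitely many indices $k$ (those with $(2k+d_1)\sim 2^M$ and $(2k+d_1)|\lambda|$ in the support of $F(\sqrt{\cdot})$) contribute. The inner sum is therefore a finite sum of Schwartz-class Hermite kernels, the outer $\lambda$-integral is over a compact set, and Fubini together with dominated convergence validate every exchange. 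This corresponds exactly to the content of Lemma 3.1 in \cite{Niedorf_Grushin_Multiplier_2022}, to which one can ultimately appeal; I would include it here mainly for completeness and to fix notation used later in the paper.
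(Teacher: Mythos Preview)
Your proof is correct and is precisely the natural argument one gives for this kind of kernel formula. The paper itself does not supply a proof of this proposition at all: it simply refers the reader to Lemma~3.1 of \cite{Niedorf_Grushin_Multiplier_2022}, which you likewise cite at the end of your argument. So your proposal is not merely aligned with the paper's approach---it actually fills in what the paper leaves to an external reference.
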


The following theorem gives us the restriction type estimate for the operator $\mathcal{L}$.
\begin{proposition}
\label{Theorem: Restriction Estimate}
Suppose $1 \leq p \leq \min{\{2d_1/(d_1 + 2), (2d_2 +2)/(d_2 + 3)\}}$. Let  $F$ and $F_M$ be as above. Then
\begin{align}
\label{Inequality: Truncated restriction equation 1}
    \| F_M(\mathcal{L}, T) f\|_{L^2 } \leq C R^{(d_1 + 2d_2)(1/p - 1/2) } 2^{-Md_2(1/p-1/2)} \|\delta_{R} F\|_{L^2(\mathbb{R})} \, \|f\|_{L^p} .
\end{align}
In particular for $l \in \mathbb{N}$,
\begin{align}
\label{Inequality: Truncated restriction equation 2}
    \left\| \sum_{M>l} F_M(\mathcal{L}, T) f \right\|_{L^2 } \leq C R^{(d_1 + 2d_2)(1/p - 1/2) } 2^{-ld_2(1/p-1/2)} \|\delta_{R} F\|_{L^2(\mathbb{R})} \, \|f\|_{L^p} .
\end{align}
Moreover, for $|y'| > 4s >0 $, 
\begin{align}
\label{Inequality: Truncated restriction equation 3}
    \| F(\sqrt{\mathcal{L}}) P_{B(y,s)} f\|_{ L^2} \leq C R^{(d_1 + d_2)(1/p - 1/2)  }  |y'|^{ - d_2(1/p - 1/2)} \|\delta_{R} F\|_{L^2(\mathbb{R})}\, \|f\|_{L^p}.
\end{align}

\end{proposition}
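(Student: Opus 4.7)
The three inequalities are essentially due to \cite{Niedorf_Grushin_Multiplier_2022}, and I would follow his strategy. The central idea is to exploit the joint functional calculus of $\mathcal{L}$ and $T = (-\Delta_{x''})^{1/2}$: on the joint spectrum one has $\kappa = (2k+d_1)|\lambda|$ and $r = |\lambda|$, so the dyadic cutoff $\Theta_M(\kappa/r)$ localizes the Hermite index to a window $k \sim 2^M$. The additional parameter $M$ is precisely what will later allow us to trade the homogeneous dimension $Q = d_1 + 2d_2$ for the topological dimension $d = d_1 + d_2$ in the applications to the Bochner--Riesz commutator.

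For \eqref{Inequality: Truncated restriction equation 1}, I would start from the explicit kernel formula in \Cref{reprsentation of kernel}, apply Plancherel in the $x''$ variable, and reduce the $L^p \to L^2$ bound to a $TT^{*}$-type estimate for the scaled Hermite projections $\mathcal{K}_k^{\lambda}$ on $\mathbb{R}^{d_1}$ with $k \sim 2^M$. The factor $R^{d_1(1/p-1/2)}$ is supplied by the Koch--Tataru Stein--Tomas-type estimate for Hermite eigenfunction clusters on $\mathbb{R}^{d_1}$, which is responsible for the condition $p \leq 2d_1/(d_1 + 2)$. The $\lambda$-integral is then performed on the window $|\lambda| \sim R^2/2^M$ dictated by the joint constraints $\sqrt{\kappa} \sim R$ and $\kappa/r \sim 2^M$, and contributes a factor $(R^2/2^M)^{d_2(1/p - 1/2)}$. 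Multiplying these produces the announced exponents $R^{(d_1+2d_2)(1/p-1/2)}\, 2^{-M d_2(1/p-1/2)}$, while the complementary hypothesis $p \leq 2(d_2+1)/(d_2+3)$ ensures that a Stein--Tomas-type restriction estimate in the $\lambda$ variable is available in the regime where $T$ dominates.

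For \eqref{Inequality: Truncated restriction equation 2}, I would invoke almost orthogonality: the supports of $\Theta_M$ in $\kappa/r$ are pairwise essentially disjoint, so the operators $\{F_M(\mathcal{L},T)\}_M$ project onto almost orthogonal joint spectral subspaces and
\begin{align*}
    \Bigl\| \sum_{M>l} F_M(\mathcal{L}, T) f \Bigr\|_{L^2}^{2} \lesssim \sum_{M>l} \|F_M(\mathcal{L}, T) f\|_{L^2}^{2}.
\end{align*}
Substituting \eqref{Inequality: Truncated restriction equation 1} and summing the geometric series $\sum_{M>l} 2^{-2Md_2(1/p-1/2)} \lesssim 2^{-2ld_2(1/p-1/2)}$ yields \eqref{Inequality: Truncated restriction equation 2}. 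For \eqref{Inequality: Truncated restriction equation 3}, the hypothesis $|y'| > 4s$ forces $|x'| \sim |y'|$ for every $x \in B(y,s)$, because $|x'-y'| \leq \varrho(x,y) \leq s < |y'|/4$. On this shell the Hermite eigenfunctions $\Phi_\mu^{\lambda}(x')$ decay rapidly unless $|\lambda| \gtrsim |y'|^{-2}$, so effectively only this smaller range of $\lambda$ contributes. Making this heuristic rigorous via the weighted Plancherel estimate \Cref{Theorem: Weighted Plancherel in second variable} restricts the $\lambda$-integral to a region of measure $\sim |y'|^{-2d_2}$ smaller than in the previous step, which converts the $R^{(d_1+2d_2)(1/p-1/2)}$ above into $R^{(d_1+d_2)(1/p-1/2)}\,|y'|^{-d_2(1/p-1/2)}$.

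The main obstacle is the localization argument behind \eqref{Inequality: Truncated restriction equation 3}: one must quantify the Hermite decay in $|x'|$ in a way that is uniform in $k, \lambda, R$ and is compatible with the $L^p$-side, and the bookkeeping that reduces the $R$-exponent from the homogeneous $Q$ to the topological $d$ is precisely the nontrivial step in the dimension-reduction strategy. All of the necessary machinery is, however, already set up in \cite{Niedorf_Grushin_Multiplier_2022}, so the proof reduces to invoking and assembling those ingredients.
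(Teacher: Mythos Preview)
Your proposal is sound in spirit but takes a much longer route than the paper does. The paper's proof is a two-line scaling argument: using the dilation homogeneity \eqref{Effect of dilation} one writes
\[
\|F_M(\mathcal{L},T)f\|_{L^2} = R^{-Q/2}\,\|(\delta_R F)_M(\mathcal{L},T)(f\circ\delta_{R^{-1}})\|_{L^2},
\]
and since $\delta_R F$ is supported in $[1/8,8]$, Theorem~3.4 of \cite{Niedorf_Grushin_Multiplier_2022} applies \emph{at unit scale} and gives \eqref{Inequality: Truncated restriction equation 1} directly after undoing the dilation; \eqref{Inequality: Truncated restriction equation 3} is handled identically. The second inequality is stated as an immediate consequence of the first. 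In other words, the paper treats Niedorf's result as a black box and only supplies the rescaling; you are instead re-sketching the \emph{content} of Niedorf's Theorem~3.4 (Koch--Tataru cluster bounds, $TT^*$, the $\lambda$-integration window) at general scale $R$. Both approaches lead to the same place, but the paper's buys brevity while yours makes the mechanism visible.

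One genuine inaccuracy in your sketch of \eqref{Inequality: Truncated restriction equation 3}: the weighted Plancherel estimate you cite, \Cref{Theorem: Weighted Plancherel in second variable}, carries a weight $|x''-y''|^N$ in the \emph{second} layer and says nothing about decay in $|x'|$, so it cannot be the tool that implements the Hermite turning-point heuristic you describe. Also, the concentration scale for $\Phi_\mu^\lambda$ under the constraint $(2k+d_1)|\lambda|\sim R^2$ is $|x'|\sim R/|\lambda|$, not $|\lambda|\gtrsim|y'|^{-2}$. The correct argument (as in \cite{Niedorf_Grushin_Multiplier_2022}) does not go through \Cref{Theorem: Weighted Plancherel in second variable}; since you already note that everything is in Niedorf, the cleanest fix is simply to invoke his Theorem~3.4 after the dilation, exactly as the paper does.
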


\begin{proof}
From (\ref{Effect of dilation}) we have
\begin{align*}
    (\delta_R F)(\sqrt{\mathcal{L}})(f \circ \delta_{R^{-1}}) &= (F(\sqrt{\mathcal{L}})f) \circ \delta_{R^{-1}} \\
    \text{and} \quad (\delta_R F)_{M}(\mathcal{L}, T)(f \circ \delta_{R^{-1}}) &= ( F_{M}(\mathcal{L}, T)f) \circ \delta_{R^{-1}} .
\end{align*}
Note that $\delta_R F$ is supported in $[1/8,8]$. Then from Theorem 3.4 of \cite{Niedorf_Grushin_Multiplier_2022} we get
\begin{align*}
    \| F_M(\mathcal{L}, T) f\|_{L^2 } &= R^{-Q/2} \| ( F_{M}(\mathcal{L}, T)f) \circ \delta_{R^{-1}}\|_{L^2 } \\
    &= R^{-Q/2} \| (\delta_R F)_{M}(\mathcal{L}, T)(f \circ \delta_{R^{-1}})\|_{L^2 } \\
    &\leq C R^{-Q/2} 2^{-Md_2(1/p-1/2)} \|\delta_{R} F\|_{L^2(\mathbb{R})} \, \|f \circ \delta_{R^{-1}}\|_{L^p} \\
    &\leq C R^{(d_1 + 2d_2)(1/p - 1/2) } 2^{-Md_2(1/p-1/2)} \|\delta_{R} F\|_{L^2(\mathbb{R})} \, \|f\|_{L^p} .
\end{align*}
Also the proof of (\ref{Inequality: Truncated restriction equation 2}) easily follows from (\ref{Inequality: Truncated restriction equation 1}).

Similarly use of (\ref{Effect of dilation}) and Theorem 3.4 of \cite{Niedorf_Grushin_Multiplier_2022} yields for $|y'| > 4s >0 $,
\begin{align*}
    \| F(\sqrt{\mathcal{L}}) P_{B(y,s)} f\|_{ L^2} &= R^{-Q/2} \| (\delta_R F)(\sqrt{\mathcal{L}}) P_{B(\delta_R y,R s)} (f \circ \delta_{R^{-1}})\|_{ L^2} \\
    &\leq C R^{-Q/2} (R |y'|)^{ - d_2(1/p - 1/2)} \|\delta_{R} F\|_{L^2(\mathbb{R})}\, \|f \circ \delta_{R^{-1}}\|_{L^p} \\
    &\leq C R^{(d_1 + d_2)(1/p - 1/2)  }  |y'|^{ - d_2(1/p - 1/2)} \|\delta_{R} F\|_{L^2(\mathbb{R})}\, \|f\|_{L^p}.
\end{align*}
    
\end{proof}

Another important tool for the proof of \Cref{Theorem: Multiplier for Commutator} is the weighted Plancherel estimate for the operator  $F_M (\mathcal{L}, T)$.  
\begin{proposition}
\label{Theorem: Weighted Plancherel in second variable}
  Let $F$ and $F_M$ be as above.  Then for all $N \in \mathbb{N} \cup \{0\}$ and almost all $y \in \mathbb{R}^d$,
\begin{align*}
    \left( \int_{\mathbb{R}^d} \left| |x''-y''|^N K_{F_M(\mathcal{L}, T)}(x,y)  \right|^2 \ dx  \right)^{1/2} &\leq C R^{-2N} R^{Q/2} 2^{M(N-d_2/2)} \|\delta_R F \|_{L^2_{N}} .
\end{align*}
\end{proposition}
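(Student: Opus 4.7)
The plan is to exploit the explicit kernel formula from Proposition \ref{reprsentation of kernel}, first reduce to a model case by rescaling, and then combine integration by parts in $\lambda$ with Plancherel in $x''$ and orthogonality in $x'$. To reduce to the case $R = 1$, I would apply the conjugation relations induced by (\ref{Effect of dilation}): setting $D_R f(x) := f(\delta_R x)$, one finds $D_R^{-1}\mathcal{L} D_R = R^2\mathcal{L}$ and $D_R^{-1}T D_R = R^2 T$, hence $D_R^{-1}F_M(\mathcal{L}, T)D_R = \tilde F_M(\mathcal{L}, T)$ with $\tilde F = \delta_R F$ supported in $[1/8, 8]$, which translates into $K_{F_M(\mathcal{L}, T)}(x, y) = R^Q K_{\tilde F_M(\mathcal{L}, T)}(\delta_R x, \delta_R y)$. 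Substituting $u = \delta_R x$, $v = \delta_R y$ in the weighted $L^2$ integral produces the exponent $R^{Q - 4N}$ outside, so the factor $R^{-2N}R^{Q/2}$ in the stated bound appears automatically, and the task reduces to proving
\[
\left(\int_{\mathbb{R}^d}|u''-v''|^{2N}|K_{\tilde F_M(\mathcal{L}, T)}(u, v)|^2\,du\right)^{1/2} \leq C\, 2^{M(N - d_2/2)}\|\tilde F\|_{L^2_N(\mathbb{R})}.
\]

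Next, I would use the polynomial identity $|u''-v''|^{2N} = \sum_{|\alpha| = N}\binom{N}{\alpha}(u''-v'')^{2\alpha}$ to pass to the control of $\int_{\mathbb{R}^{d_2}}|(u''-v'')^\alpha K_{\tilde F_M(\mathcal{L}, T)}(u, v)|^2\,du''$ for each $|\alpha| = N$. From the representation in Proposition \ref{reprsentation of kernel}, writing $H(\lambda, u', v') := \sum_k \tilde F_M((2k+d_1)|\lambda|, |\lambda|)\mathcal{K}_k^\lambda(u', v')$ and applying the identity $(u''-v'')^\alpha e^{i(u''-v'')\cdot \lambda} = (-i)^{|\alpha|}\partial^\alpha_\lambda e^{i(u''-v'')\cdot \lambda}$ together with integration by parts $|\alpha|$ times in $\lambda$ (justified since $\tilde F$ is supported in $[1/8, 8]$ and $\Theta_M$ confines $2k+d_1$ to the dyadic band $[2^{M-1}, 2^{M+1}]$, making the $\lambda$-integrand compactly supported), I obtain
\[
(u''-v'')^\alpha K_{\tilde F_M(\mathcal{L}, T)}(u, v) = \frac{i^{|\alpha|}}{(2\pi)^{d_2}}\int_{\mathbb{R}^{d_2}} e^{i(u''-v'')\cdot\lambda}\,\partial^\alpha_\lambda H(\lambda, u', v')\,d\lambda.
\]
Plancherel in the $u''$ variable then identifies the $L^2(du'')$ norm of the left-hand side with $(2\pi)^{-d_2}\|\partial^\alpha_\lambda H(\lambda, u', v')\|_{L^2(d\lambda)}^2$, and it remains to integrate in $u' \in \mathbb{R}^{d_1}$.

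The heart of the proof is to show $\int_{\mathbb{R}^{d_1}}\int_{\mathbb{R}^{d_2}}|\partial^\alpha_\lambda H(\lambda, u', v')|^2\,d\lambda\,du' \leq C\, 2^{2M(N-d_2/2)}\|\tilde F\|_{L^2_N}^2$. Two structural features drive this estimate. First, the cutoff simplifies to $\Theta_M((2k+d_1)|\lambda|/|\lambda|) = \Theta_M(2k+d_1)$, so it does not depend on $\lambda$ and serves only to localize $k$ to the shell $2k+d_1 \sim 2^M$. Second, the change of variables $\tilde\lambda = (2k+d_1)\lambda$ (valid on each such shell, with Jacobian $(2k+d_1)^{-d_2} \sim 2^{-M d_2}$) converts $\tilde F(\sqrt{(2k+d_1)|\lambda|})$ into $\tilde F(\sqrt{|\tilde\lambda|})$ while replacing $\partial_\lambda$ by $(2k+d_1)\partial_{\tilde\lambda} \sim 2^M\partial_{\tilde\lambda}$; together these account exactly for the factor $2^{M(2N-d_2)}$. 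Combined with orthogonality of $\{\mathcal{K}_k^\lambda\}_k$ in $u'$, radial reduction and the substitution $s = \sqrt{|\tilde\lambda|}$ identify the resulting $\tilde\lambda$-integral with $\|\tilde F\|_{L^2_N(\mathbb{R})}^2$ on the support $[1/8, 8]$, closing the estimate up to constants.

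The main obstacle is the handling of those Leibniz-type terms in which $\partial^\alpha_\lambda$ falls on the Hermite projection kernel $\mathcal{K}_k^\lambda$ rather than on the symbol $\tilde F_M$. Since $\Phi_\mu^\lambda(u') = |\lambda|^{d_1/4}\Phi_\mu(|\lambda|^{1/2}u')$, each such derivative produces Hermite-type operators acting in $u'$ and breaks the clean orthogonality between different $k$'s. Controlling these cross terms will require the Hermite-function commutator calculus developed in \cite{Niedorf_Grushin_Multiplier_2022}, which allows each derivative on $\mathcal{K}_k^\lambda$ to be traded for a multiplicative $2^M$ times a bounded operator that preserves Hermite levels up to finite shifts, so that the orthogonality survives with only harmless constants. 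Once these cross terms are absorbed, summing in $\alpha$ via Minkowski's inequality and undoing the rescaling to the general $R$ yields the asserted bound.
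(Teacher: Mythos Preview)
Your proposal is correct and the rescaling step is exactly the paper's argument, only done in the opposite order: the paper first invokes the $R=1$ estimate as a black box (Lemma~11 of \cite{Martini_Muller_Multiplier_Grushin_2014}) and then performs the dilation via the kernel formula of Proposition~\ref{reprsentation of kernel}, whereas you rescale first and then set out to prove the normalized estimate directly. The remainder of your sketch---integration by parts in $\lambda$, Plancherel in $u''$, orthogonality of the Hermite projections in $u'$, and the commutator calculus to handle the Leibniz terms where $\partial_\lambda^\alpha$ hits $\mathcal{K}_k^\lambda$---is precisely the content of that cited lemma, so you are reproving the reference rather than taking a genuinely different route. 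One small correction: the Hermite commutator calculus you need is developed in \cite{Martini_Muller_Multiplier_Grushin_2014}, not in \cite{Niedorf_Grushin_Multiplier_2022}; the latter uses and refines these weighted Plancherel estimates but does not originate them.
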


\begin{proof}
    As $\supp (\delta_R F) \subseteq [1/8, 8]$, from Lemma 11 of \cite{Martini_Muller_Multiplier_Grushin_2014}, for almost all $y \in \mathbb{R}^d$ we have
    \begin{align*}
        \left( \int_{\mathbb{R}^d} \left| |x''-y''|^N K_{(\delta_R F)_M(\mathcal{L}, T)}(x,y)  \right|^2 \ dx  \right)^{1/2} &\leq C 2^{M(N-d_2/2)} \|\delta_R F \|_{L^2_{N}} .
    \end{align*}

As the above expression is true for almost all $y \in \mathbb{R}^d$, in particular we can take $\delta_R y$ (see preliminaries section for definition). From \Cref{reprsentation of kernel} and making change of variables $x$ goes to $\delta_R x$ and $\lambda$ goes to $\lambda/R^2$, we get
\begin{align*}
     & \int_{\mathbb{R}^d} \left| |x''-R^2 y''|^N K_{(\delta_R F)_M(\mathcal{L}, T)}(x, \delta_R y)  \right|^2 \ dx \\
     &= R^Q \frac{1}{(2\pi)^{2 d_2}} \int_{\mathbb{R}^d} \Bigl| (R^2 |x''-y''|)^N \, \int_{\mathbb{R}^{d_2}} e^{i R^2(x'' - y'') \cdot \lambda} \sum_{k=0} ^{\infty} F(R\sqrt{(2k+d_1)|\lambda|}) \Theta(2^{-M} (2k+d_1)) \\
     & \hspace{10cm} \sum_{|\mu|= k} \Phi_{\mu} ^{\lambda} (R x') \Phi_{\mu} ^{\lambda} (R y')\, d\lambda  \Bigr|^2 \ dx \\
     &= R^{-Q} \frac{1}{(2\pi)^{2 d_2}} \int_{\mathbb{R}^d} \Bigl| (R^2 |x''-y''|)^N \, \int_{\mathbb{R}^{d_2}} e^{i (x'' - y'') \cdot \lambda} \sum_{k=0} ^{\infty} F(\sqrt{(2k+d_1)|\lambda|}) \Theta(2^{-M} (2k+d_1)) \\
     & \hspace{10cm} \sum_{|\mu|= k} \Phi_{\mu} ^{\lambda} ( x') \Phi_{\mu} ^{\lambda} ( y')\, d\lambda  \Bigr|^2 \ dx ,
\end{align*}
from which the required result follows immediately.
    
\end{proof}

Let us consider the first order gradient vector fields:
\begin{align*}
    X_j = \frac{\partial}{\partial x_j'} \quad \text{and} \quad X_{j,k} = x_j' \frac{\partial}{\partial x_k''} \quad \text{for} \ 1\leq j \leq d_1 \ \text{and} \ 1\leq k \leq d_2 .
\end{align*}
Also set $X = (X_j, X_{j,k})_{1\leq j \leq d_1, 1\leq k \leq d_2}$. Let us recall the `Mean value estimate' which will be used later.
\begin{lemma}[\cite{Bagchi_Basak__Garg_Ghosh_Sparsh_bound_pseudo_multiplier_Grushin_2023}]
\label{Theorem: Mean value theorem}
    There exists constants $C_1, C_2>0$ such that for any ball $B(x_0, r)$ and points $x,y \in B(x_0,r)$, there exists a $\Tilde{\varrho}$-length minimizing curve $\gamma_0 : [0,1] \to B(x_0, C_1 r)$ joining $x$ to $y$, and for any $f \in C^1(B(x_0, C_1 r))$,
    \begin{align*}
        |f(x)-f(y)| &\leq C_2 \varrho(x,y) \int_0^1 |X f(\gamma_0(s))| \ ds .
    \end{align*}
\end{lemma}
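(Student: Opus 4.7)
The plan is to build $\gamma_0$ out of the Carnot--Carath\'eodory structure generated by the H\"ormander vector fields $X_j, X_{j,k}$, and then reduce the inequality to a one-variable fundamental theorem of calculus along $\gamma_0$. First, the Chow--Rashevskii theorem together with completeness of the control distance produces, for any two points $x,y \in \mathbb{R}^d$, an absolutely continuous horizontal curve $\gamma_0 : [0,1] \to \mathbb{R}^d$ from $x$ to $y$ whose sub-Riemannian length realizes $\Tilde{\varrho}(x,y)$. After a constant-speed reparametrization we may assume that
$$\gamma_0'(s) = \sum_{j=1}^{d_1} a_j(s)\, X_j(\gamma_0(s)) + \sum_{j=1}^{d_1}\sum_{k=1}^{d_2} b_{j,k}(s)\, X_{j,k}(\gamma_0(s))$$
for almost every $s$, with the horizontal norm $\bigl(\sum_j a_j(s)^2 + \sum_{j,k} b_{j,k}(s)^2\bigr)^{1/2}$ equal to the constant $\Tilde{\varrho}(x,y)$.

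The next step is to check that $\gamma_0$ automatically remains in a controlled dilate of $B(x_0, r)$. Since $\gamma_0|_{[0,s]}$ is itself a horizontal curve from $x$ to $\gamma_0(s)$ of sub-Riemannian length $s\,\Tilde{\varrho}(x,y)$, the triangle inequality yields $\Tilde{\varrho}(x_0, \gamma_0(s)) \leq \Tilde{\varrho}(x_0,x) + s\,\Tilde{\varrho}(x,y) \leq 3r$, and the equivalence $\Tilde{\varrho} \sim \varrho$ then gives $\gamma_0([0,1]) \subseteq B(x_0, C_1 r)$ for a suitable $C_1$. Once this inclusion is in hand, for any $f \in C^1(B(x_0, C_1 r))$ the chain rule combined with the fundamental theorem of calculus gives
$$f(y)-f(x) = \int_0^1 \Bigl( \sum_j a_j(s)\, X_j f(\gamma_0(s)) + \sum_{j,k} b_{j,k}(s)\, X_{j,k}f(\gamma_0(s)) \Bigr) ds,$$
and the Cauchy--Schwarz inequality bounds the integrand pointwise by $\Tilde{\varrho}(x,y)\, |Xf(\gamma_0(s))|$. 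Passing from $\Tilde{\varrho}$ to $\varrho$ produces the claimed mean value inequality with some constant $C_2$.

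The step I expect to be most delicate is the existence of an \emph{actual} length-minimizing horizontal curve (as opposed to a minimizing sequence of near-minimizers). This relies on Filippov-type compactness of the admissible controls $(a_j, b_{j,k})$ together with lower semicontinuity of the length functional, which is standard in the Carnot--Carath\'eodory theory for smooth H\"ormander systems and is the content invoked in the cited reference. Once such a minimizer is available, all other ingredients --- the triangle-inequality confinement to $B(x_0, C_1 r)$, the chain rule along a horizontal curve, and Cauchy--Schwarz --- are routine, so this localization to a bona fide geodesic is the single nontrivial input.
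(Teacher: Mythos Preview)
The paper does not supply a proof of this lemma; it is quoted directly from \cite{Bagchi_Basak__Garg_Ghosh_Sparsh_bound_pseudo_multiplier_Grushin_2023} and used as a black box. Your sketch is the standard sub-Riemannian argument behind such a result and is correct in outline: existence of a length-minimizing horizontal curve (via completeness of the control metric for the smooth H\"ormander system $\{X_j, X_{j,k}\}$), confinement of this geodesic to a fixed dilate of the ball by the triangle inequality and the equivalence $\Tilde{\varrho}\sim\varrho$, and then the fundamental theorem of calculus plus Cauchy--Schwarz along the horizontal decomposition of $\gamma_0'$.

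One small imprecision: in the line $\Tilde{\varrho}(x_0,\gamma_0(s)) \leq \Tilde{\varrho}(x_0,x) + s\,\Tilde{\varrho}(x,y) \leq 3r$ you are implicitly using $\Tilde{\varrho}(x_0,x)\leq r$, but the hypothesis $x\in B(x_0,r)$ is stated for the $\varrho$-ball. The equivalence $\Tilde{\varrho}\sim\varrho$ only gives $\Tilde{\varrho}(x_0,x)\leq Cr$, so the bound should read $\leq 3Cr$. This changes nothing structurally --- you absorb it into $C_1$ anyway --- but it is worth writing the constants consistently. Your identification of the geodesic existence as the one nontrivial input is exactly right; everything downstream is routine calculus.
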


Now we are going to state the pointwise version of weighted Plancherel theorem.
\begin{proposition}
\label{Theorem: Weighted Plancherel with L infinity condition}
Let $\Gamma \in \mathbb{N}^{d_1 +d_1 d_2}$. For all $\beta \geq 0$ with $s> \beta + 1/2$ and all bounded Borel functions $F : \mathbb{R} \to \mathbb{C}$ supported in $[0,R]$,
\begin{align*}
    \esssup_y \left|(1+R \varrho(x,y))^{\beta} K_{F(\sqrt{\mathcal{L}})}(x,y) \right| &\leq C |B(x, R^{-1})|^{-1} \|\delta_R F\|_{L^{\infty}_{s}} \\
   \text{and} \quad \esssup_y \left|(1+R \varrho(x,y))^{\beta} X_x^{\Gamma} K_{F(\sqrt{\mathcal{L}})}(x,y) \right| &\leq C R^{|\Gamma|} \  |B(x, R^{-1})|^{-1} \|\delta_R F\|_{L^{\infty}_{s}} .
\end{align*}
\end{proposition}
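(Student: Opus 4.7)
The plan is to follow the Cowling--Sikora wave equation scheme: reduce by scaling to the normalized case $R=1$, then combine finite speed of propagation (\Cref{lemma support of kernel}) with an on-diagonal Plancherel-type $L^2$ bound, and upgrade to a pointwise estimate via Sobolev embedding in the spectral variable. First I would use the dilation identity (\ref{Effect of dilation}) to verify
\[ K_{F(\sqrt{\mathcal{L}})}(x,y) = R^{Q}\, K_{(\delta_R F)(\sqrt{\mathcal{L}})}(\delta_R x, \delta_R y), \]
which, combined with $\varrho(\delta_R x, \delta_R y) = R\,\varrho(x,y)$ and $|B(x,R^{-1})| = R^{-Q}|B(\delta_R x, 1)|$, reduces both inequalities to the case $R=1$ with $G := \delta_R F$ supported in $[0,1]$.

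For the pointwise bound at $R=1$, I would replace $G$ by its even extension (harmless since $\sqrt{\mathcal{L}} \geq 0$) and use Fourier inversion $G(\sqrt{\mathcal{L}}) = \frac{1}{2\pi}\int \widehat{G}(\xi)\cos(\xi\sqrt{\mathcal{L}})\,d\xi$. For fixed $(x,y)$ with $r := \varrho(x,y)$, pick an even cutoff $\psi \in C_c^{\infty}(\mathbb{R})$ with $\psi = 1$ near $0$ and split $G = G_r^{\flat} + G_r^{\sharp}$ by $\widehat{G_r^{\flat}}(\xi) := \psi(\xi/r)\widehat{G}(\xi)$. By \Cref{lemma support of kernel} one has $K_{G_r^{\flat}(\sqrt{\mathcal{L}})}(x,y) = 0$, while the remaining piece $G_r^{\sharp}$, whose Fourier transform is supported in $\{|\xi| \gtrsim r\}$, satisfies the Sobolev loss $\|G_r^{\sharp}\|_{L^{\infty}_{s-\beta}} \lesssim r^{-\beta}\|G\|_{L^{\infty}_{s}}$ (using the compact spectral support of $G$). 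Plugging $G_r^{\sharp}$ into the baseline unweighted pointwise bound $|K_{H(\sqrt{\mathcal{L}})}(x,\cdot)| \lesssim |B(x,1)|^{-1}\|H\|_{L^{\infty}_{s'}}$, valid for any $H$ supported in $[0,1]$ and $s' > 1/2$ (itself a standard consequence of the on-diagonal Plancherel estimate for $\mathcal{L}$---implicit in the $N=0$ case of \Cref{Theorem: Weighted Plancherel in second variable} summed over $M$---combined with a spectral-side $L^2 \hookrightarrow L^{\infty}$ Sobolev embedding), yields the weighted estimate with $(1+r)^{-\beta}$, provided $s-\beta > 1/2$, precisely our hypothesis.

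The derivative estimate is established by the same scheme: the vector fields $X_j$ and $X_{j,k}$ are homogeneous of degree one with respect to $\delta_R$, so the scaling step contributes the additional factor $R^{|\Gamma|}$; and applying $X_x^{\Gamma}$ to the kernel representation of \Cref{reprsentation of kernel} reduces the analysis to bounds on $X^{\Gamma}$-derivatives of the scaled Hermite functions $\Phi_{\mu}^{\lambda}$, which are available by standard means. The main technical obstacle is the upgrade from the on-diagonal $L^2$ Plancherel bound to a pointwise kernel estimate that retains the sharp volume factor $|B(x,R^{-1})|^{-1}$ (rather than its square root $|B(x,R^{-1})|^{-1/2}$ that naive Cauchy--Schwarz would yield); this requires a careful use of finite speed of propagation to localize the Plancherel measure to the ball of radius $\sim R^{-1}$, and the sharpness of the hypothesis $s > \beta + 1/2$ is precisely the balance between the $\beta$-order spectral decay gained from the Fourier-side truncation and the $1/2$-order cost of Sobolev embedding on $\mathbb{R}$.
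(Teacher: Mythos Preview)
Your finite-speed-of-propagation scheme is a legitimate alternative route, but the proof as written has a real gap at the ``baseline unweighted pointwise bound''. You claim that
\[
|K_{H(\sqrt{\mathcal{L}})}(x,y)| \lesssim |B(x,1)|^{-1}\|H\|_{L^{\infty}_{s'}}
\]
follows from the $N=0$ case of \Cref{Theorem: Weighted Plancherel in second variable} summed over $M$, together with a spectral-side Sobolev embedding. It does not. Summing that proposition over $M$ gives only the uniform bound $\int |K_{H(\sqrt{\mathcal{L}})}(x,y)|^2\,dx \leq C\|H\|_{L^2}^2$, with a right-hand side independent of $y$. A $TT^*$/factorisation argument then yields at best $|K_{H^2}(x,y)| \leq C$, again with no volume factor. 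But in the Grushin setting $|B(x,1)| \sim \max\{1,|x'|\}^{d_2}$ is unbounded in $x$, so the required bound $|B(x,1)|^{-1}$ can be arbitrarily small; a uniform constant is strictly weaker. Spectral-side Sobolev embedding only trades $\|H\|_{L^2}$ for $\|H\|_{L^{\infty}}$ and does nothing for the spatial pointwise control. To recover the correct volume factor you need the sharper Plancherel estimate $\int |K_{H(\sqrt{\mathcal{L}})}(x,y)|^2\,dx \leq C|B(y,1)|^{-1}\|H\|_{L^2}^2$, which is itself a consequence of the Gaussian heat kernel bound --- precisely the input your argument was trying to avoid.

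The paper takes the direct heat-kernel route: it cites the Gaussian upper bounds $|p_t(x,y)| \leq C|B(x,t^{1/2})|^{-1}e^{-c\varrho(x,y)^2/t}$ from \cite{Martini_Sikora_Grushin_Weighted_Plancherel_2012} and the derivative bounds $|X_x^{\Gamma}p_t(x,y)| \leq C t^{-|\Gamma|/2}|B(x,t^{1/2})|^{-1}e^{-c\varrho(x,y)^2/t}$ from \cite{Dziubanski_Sikora_Lie_group_Grushin_2021,Bagchi_Basak__Garg_Ghosh_Sparsh_bound_pseudo_multiplier_Grushin_2023}, and then invokes the general machinery of \cite[Lemma 4.3]{Bui_Duong_Spectral_multiplier__Trieble_Lizorkin_2021} and \cite[Theorem 7.3]{Ouhabaz_Analysis_heat_equation_domain_2005} to convert these into the weighted pointwise multiplier kernel estimates. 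This also handles the derivative inequality cleanly, whereas your suggestion to ``reduce to $X^{\Gamma}$-derivatives of the scaled Hermite functions'' via \Cref{reprsentation of kernel} is vague: obtaining those bounds uniformly in $\lambda$ and $k$ with the correct decay is essentially equivalent to the heat-kernel derivative estimates the paper cites.
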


\begin{proof}
For all $t>0$ the integral kernel $p_t$ of the operator $\exp{(-t\mathcal{L})}$ satisfies (see \cite{Martini_Sikora_Grushin_Weighted_Plancherel_2012})
\begin{align*}
    | p_t(x,y)| & \leq C |B(x, t^{1/2})|^{-1} e^{-c \varrho(x,y)^2/t} ,
\end{align*}
for all $x,y \in \mathbb{R}^d$.
Note that from \cite{Dziubanski_Sikora_Lie_group_Grushin_2021}, (also see \cite{Bagchi_Basak__Garg_Ghosh_Sparsh_bound_pseudo_multiplier_Grushin_2023}) we have
\begin{align*}
    |X_x^{\Gamma} p_t(x,y)| & \leq C t^{-|\Gamma|/2} |B(x, t^{1/2})|^{-1} e^{-c \varrho(x,y)^2/t}  .
\end{align*}
Therefore following the proof of Lemma 4.3 of \cite{Bui_Duong_Spectral_multiplier__Trieble_Lizorkin_2021} and using Theorem 7.3 of \cite{Ouhabaz_Analysis_heat_equation_domain_2005} we get our required estimates.
\end{proof}

The following lemma will be used later in our proof.
\begin{lemma}
\label{lemma: outside distance}
    Let $R> 0$. Then for any $M> Q$ we have
    \begin{align*}
        \int_{\varrho(x, y) \geq r} \frac{dy}{\big( 1 + R\varrho(x, y)   \big)^M} \lesssim R^{-M} r^{-M + d_1 + d_2}\, \max\{r, |x'|\}^{d_2}.
    \end{align*}
\end{lemma}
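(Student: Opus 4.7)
The plan is to estimate the integral by dyadic annular decomposition, exploiting the ball volume formula $|B(x,s)| \sim s^{d_1+d_2}\max\{s,|x'|\}^{d_2}$ recorded in the preliminaries. Let $A_k := \{y \in \mathbb{R}^d : 2^k r \leq \varrho(x,y) < 2^{k+1} r\}$ for $k \geq 0$. On $A_k$, the bound $1 + R\varrho(x,y) \geq R \cdot 2^k r$ gives $(1+R\varrho(x,y))^{-M} \leq (R \cdot 2^k r)^{-M}$, while $|A_k| \leq |B(x, 2^{k+1} r)| \lesssim (2^k r)^{d_1+d_2}\max\{2^k r, |x'|\}^{d_2}$. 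Summing the annular contributions will then yield
\begin{equation*}
\int_{\varrho(x,y) \geq r} \frac{dy}{(1+R\varrho(x,y))^M} \lesssim R^{-M} r^{-M+d_1+d_2}\sum_{k=0}^{\infty} 2^{-k(M-d_1-d_2)}\max\{2^k r, |x'|\}^{d_2},
\end{equation*}
so the problem reduces to bounding the displayed series by $\max\{r, |x'|\}^{d_2}$.

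Next I would split into two cases according to the relative size of $r$ and $|x'|$. If $r \geq |x'|$, then $\max\{2^k r, |x'|\} = 2^k r$ for every $k \geq 0$, and the series becomes the geometric sum $r^{d_2}\sum_{k\geq 0} 2^{-k(M-Q)}$, which is $\lesssim r^{d_2}$ because $M>Q$; since $\max\{r,|x'|\} = r$ here, the required bound follows. If instead $r < |x'|$, pick $k_0 := \lceil \log_2(|x'|/r) \rceil$, so that $2^{k_0} r \sim |x'|$, and split the sum at $k_0$. For $k < k_0$ the max equals $|x'|$, contributing $|x'|^{d_2}\sum_{k<k_0} 2^{-k(M-d)} \lesssim |x'|^{d_2}$ by convergence of a geometric series (using $M>d$). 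For $k \geq k_0$ the max equals $2^k r$, contributing $r^{d_2}\sum_{k\geq k_0} 2^{-k(M-Q)} \lesssim r^{d_2}(r/|x'|)^{M-Q} \leq r^{d_2} \leq |x'|^{d_2}$, using $M>Q$ and $r<|x'|$. In both subcases the bound is $\lesssim |x'|^{d_2} = \max\{r,|x'|\}^{d_2}$, completing the argument.

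The argument is a standard doubling-type computation, and I do not anticipate a real obstacle. The only point requiring care is the case split forced by the non-polynomial volume growth of $\varrho$-balls (which is absent in the Euclidean setting); the hypothesis $M>Q$ appears precisely to make the relevant geometric series converge.
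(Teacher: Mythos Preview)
Your proof is correct and follows essentially the same approach as the paper: dyadic annular decomposition combined with the ball volume formula $|B(x,s)| \sim s^{d_1+d_2}\max\{s,|x'|\}^{d_2}$. The paper's proof compresses your case split into a single asserted inequality $\sum_{k\geq 0} 2^{-k(M-d)}\max\{2^k r,|x'|\}^{d_2} \leq C\max\{r,|x'|\}^{d_2}$, whereas you supply the details; the substance is identical.
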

\begin{proof}
Decomposing the integral into annular region we have
\begin{align*}
  \int_{\varrho(x, y) \geq r} \frac{dy}{\big( 1 + R\varrho(x, y)   \big)^M} &= \sum_{k = 0} ^\infty \int_{2^k r \leq \varrho(x, y) \leq 2^{k+ 1} r}  \frac{dy}{\big( 1 + R \varrho(x, y)   \big)^M} \\
 & \leq C \sum_{k = 0} ^\infty \frac{R^{-M}}{(2^k r)^M} (2^k r)^{d_1+d_2} \max\{(2^k r), |x'|\}^{d_2} \\
 & \leq C R^{-M} r^{-M + d_1 + d_2} \max\{r, |x'|\}^{d_2} ,
\end{align*}
provided $M>Q$. 
\end{proof}

Next, we recall the notion of the
space of functions of bounded mean oscillation ($BMO$). A function $f \in L^1 _{loc} (\mathbb{R}^d)$ is said to be in the space $BMO^{\varrho}(\mathbb{R}^d)$ if 
\begin{align}
\label{Definition of BMO}
    \|f\|_{BMO^{\varrho}(\mathbb{R}^d)}:= \sup_{B} \frac{1}{|B|} \int_{B} |f(x) - f_B| \, dx < \infty.
\end{align}
One of the fundamental properties of functions with bounded mean oscillation is the celebrated John–Nirenberg inequality  which says that a function with bounded mean oscillation has exponential decay of its distribution function. An immediate consequence of this distribution inequality is the $L^p$ characterization of $BMO^{\varrho}(\mathbb{R}^d)$ norms:
\begin{align}
\label{Cherecterisation of BMO}
    \|f\|_{BMO^{\varrho}(\mathbb{R}^d)} \sim \sup_{B}\left(\frac{1}{|B|}\int_{B}|f(x)-f_B|^p \, dx \right)^{\frac{1}{p}},
\end{align}
for any $1<p<\infty$. 

Moreover, we also need the following inequality. Let $f$ be in $BMO^{\varrho}(\mathbb{R}^d)$. Given a ball $B$ and a positive integer $m$ we have
\begin{align}
\label{Difference of average in terms of BMO}
    |b_{B}- b_{2^m B}| &\leq 2^Q m \|f\|_{BMO^{\varrho}(\mathbb{R}^d)}.
\end{align}
One can easily see that $L^{\infty}(\mathbb{R}^d)$ is a proper subspace of $BMO^{\varrho}(\mathbb{R}^d)$, for example the unbounded function $\log (\varrho(x,0))$ is in $BMO^{\varrho}(\mathbb{R}^d)$. For the proof the above results related to $BMO^{\varrho}(\mathbb{R}^d)$ functions see \cite{Ding_lee_Lin_Hardy_BMO_General_Sets_2014}, \cite{Duong_Yan_BMO_John_Nirenber_2005}, \cite{Chen_Tian_Ward_Commutator_Bochner_Riesz_Elliptic_2021}, \cite{Grafakos_Modern_Fourier_Analysis_2014}.

To prove the compactness of the Bochner-Riesz commutators $[b, S^{\alpha}]$,  we need the following Kolmogorov-Riesz compactness theorem.
\begin{theorem}[\cite{Olsen_Holden_Kolmogorov_Riesz_Compactness_2010}]
\label{Theorem: Characterizations of compactness}
Let $1 < p < \infty$. The subset $\mathcal{F}$ in $L^p(\mathbb{R}^d)$ is called totally bounded if $\mathcal{F}$ satisfies the following conditions:
\begin{enumerate}
       \item  Norm boundedness uniformly : $\displaystyle{ \sup_{f \in \mathcal{F}} \|f\|_{L^p (\mathbb{R}^d)} < \infty}$;
       \item Equicontinuous uniformly: $\displaystyle{\lim_{t \rightarrow 0} \| f(\cdot + t) - f(\cdot)\|_{L^p (\mathbb{R}^d)} = 0   }$ uniformly in $f\in \mathcal{F}$;
       \item  Control uniformly away from origin: $\displaystyle{\lim_{A \rightarrow \infty} \|\chi_{\mathcal{F}_A} f \|_{L^p (\mathbb{R}^d)}=0}$, uniformly in $f\in \mathcal{F}$, where $\chi_{\mathcal{F}_A} = \{x \in \mathbb{R}^d : \varrho(x, 0) > A\}$.
\end{enumerate}
\end{theorem}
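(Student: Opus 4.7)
The strategy is to establish this as a characterization of relative compactness in $L^p$ by constructing, for each $\epsilon > 0$, a finite $\epsilon$-net for $\mathcal{F}$. The standard route proceeds through three successive reductions: a spatial truncation using condition (3), a regularization by convolution using condition (2), and finally an application of Arzelà-Ascoli exploiting condition (1). I would handle sufficiency in this way (necessity is elementary: a totally bounded family in a metric space is bounded, and each of the other two properties holds for a single $L^p$ function and transfers to a finite $\epsilon$-net).

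First I would fix $\epsilon > 0$ and invoke condition (3) to choose $A > 0$ so large that $\|\chi_{\mathcal{F}_A} f\|_{L^p} < \epsilon/4$ uniformly for $f \in \mathcal{F}$. Since $\varrho$-balls centered at the origin are bounded in the Euclidean sense (this is immediate from the explicit formula for $\varrho$), this reduces the approximation problem to the truncated family supported in a Euclidean compact set $K$. Next, with $\phi_\delta$ a standard smooth Euclidean mollifier supported in a ball of radius $\delta$, Minkowski's integral inequality gives
\begin{align*}
\|\phi_\delta * f - f\|_{L^p(\mathbb{R}^d)} \leq \int \phi_\delta(t)\, \|f(\cdot - t) - f(\cdot)\|_{L^p(\mathbb{R}^d)}\, dt,
\end{align*}
so condition (2) lets me pick $\delta > 0$ small enough that the left side is less than $\epsilon/4$ uniformly in $f \in \mathcal{F}$.

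Finally, the smoothed family $\{\phi_\delta * f : f \in \mathcal{F}\}$ restricted to a slightly enlarged compact set $K'$ is (i) uniformly bounded in sup norm via Young's inequality by $\|\phi_\delta\|_{L^{p'}} \sup_f \|f\|_{L^p}$, and (ii) uniformly equicontinuous, because
\begin{align*}
|(\phi_\delta * f)(x) - (\phi_\delta * f)(y)| \leq \|\phi_\delta(x-\cdot) - \phi_\delta(y-\cdot)\|_{L^{p'}} \|f\|_{L^p},
\end{align*}
and the first factor tends to zero as $|x-y| \to 0$ since $\phi_\delta$ is smooth with compact support. Arzelà-Ascoli then extracts a finite $\eta$-net in $C(K')$; taking $\eta = \epsilon/(4 |K'|^{1/p})$ converts this into a finite $\epsilon/4$-net for the smoothed family in $L^p(K')$. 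Combining the three approximations by the triangle inequality produces a finite $\epsilon$-net for $\mathcal{F}$ itself.

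The main technical point to check — more a bookkeeping issue than a genuine obstacle — is the compatibility of the Euclidean tools with the $\varrho$-metric appearing in condition (3): one must verify that $\varrho$-balls around the origin are Euclidean-bounded (and vice versa on compact regions) so that the truncation produces a Euclidean compact set, and one must confirm that equicontinuity of translates in condition (2), formulated via $f(\cdot + t)$ with $t \in \mathbb{R}^d$, is exactly what drives the convolution estimate above. Both points follow at once from the explicit expression for $\varrho$ given in the preliminaries, so the proof is complete.
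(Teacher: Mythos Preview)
The paper does not prove this statement at all: it is quoted from \cite{Olsen_Holden_Kolmogorov_Riesz_Compactness_2010} in the preliminaries as a tool to be applied later in the proof of \Cref{Theorem: Compactness of Bochner-Riesz commutator}, with no accompanying argument. So there is no ``paper's own proof'' to compare against.

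That said, your argument is the correct standard proof of the Kolmogorov--Riesz theorem, and you have handled the one nonstandard feature --- that condition (3) is phrased with the Grushin distance $\varrho$ rather than the Euclidean one --- appropriately: the check that $\{x:\varrho(x,0)\leq A\}$ is Euclidean-bounded follows exactly as you say from the explicit formula for $\varrho$, and condition (2) is already Euclidean translation, so the mollification step goes through without modification. The only remark worth adding is that the theorem as stated in the paper is phrased somewhat loosely (``is called totally bounded if'') whereas what is actually used is the implication that (1)--(3) together force total boundedness; your proof supplies exactly this direction, and your parenthetical remark on necessity covers the converse.
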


\section{Boundedness of Bochner-Riesz Commutator}

\begin{proof}[Proof of \Cref{Theorem: Bochner-Riesz Commutator}]
Assuming the proof of \Cref{Theorem: Multiplier for Commutator} one can be easily prove \Cref{Theorem: Bochner-Riesz Commutator}. Indeed take $F(\eta)= \left(1-\eta^2 \right)^{\alpha}_{+}$ and $t= 1$. Then one easily check that $F \in W^2_{\beta}$ if and only if $\alpha> \beta -1/2$. Therefore from \Cref{Theorem: Multiplier for Commutator} for $\alpha>d(1/p-1/2)-1/2$ and $p<q<p'$ we get
\begin{align*}
    \|[b, S^{\alpha}(\mathcal{L})]f\|_{L^q} &= \|[b, F(t \sqrt{\mathcal{L}})]f\|_{L^q} \leq C \|b\|_{BMO^{\varrho}(\mathbb{R}^d)} \|F\|_{W^2_{\beta}} \|f\|_{L^q} \leq C \|b\|_{BMO^{\varrho}(\mathbb{R}^d)} \|f\|_{L^q} .
\end{align*}    
\end{proof}

Next we prove the \Cref{Theorem: Multiplier for Commutator}.

\begin{proof}[Proof of \Cref{Theorem: Multiplier for Commutator}]

We proceed in several steps. Let $f \in C_c^{\infty}(\mathbb{R}^d)$.

$(1)$ \emph{Reduction to the case $p<q<2$:} Observe that 
\begin{align*}
    [b, F(t\sqrt{\mathcal{L}})]^{*} &= -[\Bar{b}, \Bar{F}(t\sqrt{\mathcal{L}})].
\end{align*}
Therefore, with the aid of duality and interpolation argument, \Cref{Theorem: Multiplier for Commutator} is reduced to proving only for the indices $p<q<2$.

$(2)$ \emph{Decomposition of $F$ into sum of dyadic parts in the Fourier transform side:} Let us fix an even function $\eta \in C_c ^{\infty} (\mathbb{R})$ such that $\supp{\eta} \subseteq \{\xi : 1/4\leq |\xi| \leq 1 \}$ and $\sum_{l\in \mathbb{Z}} \eta(2^{-l} \xi) = 1$, for all $\xi \neq 0$.  Set $\eta_0 (\xi) = 1 - \sum_{l\geq 1} \eta(2^{-l} \xi)$ and $\eta_l (\xi) = \eta(2^{-l} \xi), \,l \in \mathbb{N}$. Now we define 
\begin{align*}
    F^{(l)} (u) = \frac{1}{2 \pi} \int_{\mathbb{R}} \eta_l (\xi) \widehat{F}(\xi) \cos{u\xi} \, d\xi, \quad l \geq 0.
\end{align*}
Then by spectral theorem we decompose  $ F(t\sqrt{\mathcal{L}})$ as
\begin{align}
\label{Equation: Dyadic decomposition}
    F(t\sqrt{\mathcal{L}}) &= \sum_{l \geq 0} F^{(l)} (t\sqrt{\mathcal{L}}).
\end{align}
Therefore
\begin{align*}
    \|[b, F(t\sqrt{\mathcal{L}})] f\|_{L^q} & \leq \sum_{l\geq 0} \|[b, F^{(l)}(t\sqrt{\mathcal{L}})f]\|_{L^q}.
\end{align*}
This shows that, in order to prove (\ref{main inequality}),  it suffices to show that 
\begin{equation}
\label{equation: inequality for F^l}
  \|[b, F^{(l)}(t\sqrt{\mathcal{L}})f]\|_{L^q(\mathbb{R}^d)} \leq C 2^{-\varepsilon \,l} \|b\|_{BMO^{\varrho}(\mathbb{R}^d)} \|F\|_{W^2_{\beta}(\mathbb{R})} \|f\|_{L^q(\mathbb{R}^d)} \quad \mbox{for}\,\,\mbox{some}\,\, \varepsilon>0.
\end{equation}

$(3)$ \emph{Reduction of boundedness of the commutator to that of corresponding operator:} We use the idea from \cite{Chen_Ouhabaz_Bochner-Riesz_Grushin_2016}. Note that $\mathbb{R}^d$ is separable with respect to the distance $\varrho$. Therefore we can choose a sequence $\{x_n\}_{n \in \mathbb{N}}$ in $\mathbb{R}^d$ such that $\varrho(x_i, x_j) > \frac{2^l t}{10}$ for $i\neq j$ and $\sup_{x \in \mathbb{R}^d} \inf_i \varrho(x, x_i) \leq \frac{2^l t}{10}$. For $\kappa>0$ let $\kappa B_i := B(x_i, \kappa 2^l t)$. Therefore we may find a decomposition of disjoint sets $\widetilde{B}_i \subseteq B\left(x_i, \frac{2^l t}{10}\right)$ such that for any $\kappa>0$, number of overlapping disjoint balls $\kappa B_i$ can be bounded by $C(\kappa)$, which is independent of $l$ and $t$. One can check that for $i \neq j$, $B(x_i, \frac{2^l t}{20}) \bigcap B(x_j, \frac{2^l t}{20})  = \emptyset$.

Now we can see that 
\begin{eqnarray*}
    \mathcal{D}_{2^l t} \subseteq \bigcup_{\{i,j : \varrho(x_i, x_j) < 2^{l+1}t\}} \widetilde{B}_i \times \widetilde{B}_j \subseteq \mathcal{D}_{2^{l+2} t}.
\end{eqnarray*}
Therefore we get
\begin{eqnarray*}
F^{(l)} (t\sqrt{\mathcal{L}})f = \sum_{\{i,j : \varrho(x_i, x_j) < 2^{l+1} t\}} P_{\widetilde{B}_i} F^{(l)} (t\sqrt{\mathcal{L}})P_{\widetilde{B}_j} f   . 
\end{eqnarray*}
Using \Cref{lemma support of kernel}, one can check that $\supp K_{[b,F^{(l)} (t\sqrt{\mathcal{L}})]} \subseteq \mathcal{D}_{2^l t}$. Then we have 

\begin{align*}
    [b, F^{(l)} (t\sqrt{\mathcal{L}})] f =  \sum_{\{i,j : \varrho(x_i, x_j) < 2^{l+1} t\}} P_{\widetilde{B}_i}  [b, F^{(l)} (t\sqrt{\mathcal{L}})]P_{\widetilde{B}_j} f  .
\end{align*}
Using $[b, F^{(l)} (t\sqrt{\mathcal{L}})] = (b - b_{B_j}) F^{(l)} (t\sqrt{\mathcal{L}}) - F^{(l)} (t\sqrt{\mathcal{L}})(b - b_{B_j})$, we get
\begin{align*}
   & \|[b, F^{(l)} (t\sqrt{\mathcal{L}})]f\|_{L^q}^q \\
   & \leq C K^{q-1} \sum_i \sum_{\{j : \varrho(x_i, x_j) < 2^{l+1} t\}} \Bigl(\| P_{\widetilde{B}_i}\, (b - b_{B_j})\, F^{(l)} (t\sqrt{\mathcal{L}}) P_{\widetilde{B}_j} f \|_{L^q}^q \\
   & \hspace{7cm} + \| P_{\widetilde{B}_i}\, F^{(l)} (t\sqrt{\mathcal{L}}) (P_{\widetilde{B}_j}\, (b - b_{B_j}) f )\|_{L^q}^q  \Bigr) \\
   & \leq C K^{q} \sum_j \Bigl(\| P_{4 B_j}\, (b - b_{B_j})\, F^{(l)} (t\sqrt{\mathcal{L}}) P_{\widetilde{B}_j} f \|_{L^q}^q \\
   & \hspace{7cm} + \| P_{4 B_j}\, F^{(l)} (t\sqrt{\mathcal{L}}) (P_{\widetilde{B}_j}\, (b - b_{B_j}) f )\|_{L^q}^q  \Bigr) \\
   & := S_1 + S_2 .
\end{align*}
We estimate each term separately.

\underline{\emph{Estimation of $S_1$}:}
Since $F^{(l)}$ is not compactly supported, we take a bump function $\psi \in C_c ^{\infty}(\mathbb{R})$  with support in $[1/16, 4]$ and $\psi = 1$ in $[1/8, 2]$. Then, it follows that 
\begin{align*}
    S_1^{1/q} & \leq C \left( \sum_j \|P_{4 B_j}\, (b - b_{B_j})\, (\psi F^{(l)})(t\sqrt{\mathcal{L}}) P_{\widetilde{B}_j} f\|_{L^q}^q \right)^{1/q} \\
    & \hspace{5cm} + C \left( \sum_j \|P_{4 B_j}\, (b - b_{B_j})\, ((1-\psi) F^{(l)}) (t\sqrt{\mathcal{L}}) P_{\widetilde{B}_j} f\|_{L^q}^q \right)^{1/q} \\
    & := S_{11} + S_{12}.
\end{align*}
Therefore, in order to estimate $S_1$, it suffices to estimate each $S_{11}$ and $S_{12}$ separately. First, we focus our attention to $S_{11}$.

\underline{\emph{Estimation of $S_{11}$}:}
 As measure of a ball depends also on the center of that ball, we consider the following two cases.

\underline{Case I:} Assume $|x_j'| \geq 2^{l+2} t$.

An application of H\"older's inequality implies that
\begin{align*}
     \|P_{4 B_j}\, (b - b_{B_j})\, (\psi F^{(l)})(t\sqrt{\mathcal{L}}) P_{\widetilde{B}_j} f\|_{L^q} &\leq C \|P_{4 B_j}\, (b - b_{B_j}) \|_{L^{\frac{2q}{2-q}}} \| (\psi F^{(l)})(t\sqrt{\mathcal{L}}) P_{\widetilde{B}_j} f \|_{L^2} .
\end{align*}
Notice that, by (\ref{Cherecterisation of BMO}) and (\ref{Difference of average in terms of BMO}) we get
\begin{align}
\label{Inequality: BMO norm calculation} 
   \|P_{4 B_j}\, (b - b_{B_j})\|_{L^{\frac{2q}{2-q}}} &= |4B_j|^{\frac{2-q}{2q}}\, \left( \frac{1}{|4B_j|} \int_{4 B_j} |b - b_{B_j}|^{\frac{2q}{2-q}}   \right)^{ \frac{2-q}{2q}} \\
   &\nonumber \leq C |4B_j|^{\frac{2-q}{2q}}\, \left( \frac{1}{|4B_j|} \int_{4B_j} |b - b_{4B_j}|^{\frac{2q}{2-q}}   \right)^{ \frac{2-q}{2q}} \\
   & \hspace{6cm} \nonumber + C |4B_j|^{\frac{2-q}{2q}} |b_{4B_j} - b_{B_j}| \\
   &\nonumber \leq C |4B_j|^{\frac{2-q}{2q}} \|b\|_{BMO^{\varrho}(\mathbb{R}^d)} .
 \end{align}
Applying this estimate together with \Cref{Theorem: Restriction Estimate} and H\"older's inequality, we see that
\begin{align*}    
     & \|P_{4 B_j}\, (b - b_{B_j})\, (\psi F^{(l)})(t\sqrt{\mathcal{L}}) P_{\widetilde{B}_j} f\|_{L^q} \\
     & \leq C \|b\|_{BMO^{\varrho}(\mathbb{R}^d)}\, |B(x_j, 2^{l+2} t)|^{1/q - 1/2}\, \|  (\psi F^{(l)}) (t\sqrt{\mathcal{L}}) P_{\widetilde{B}_j} f \|_{L^2}\\
     & \leq C \|b\|_{BMO^{\varrho}(\mathbb{R}^d)}\, (2^l t)^{(d_1 + d_2) (1/q - 1/2)} |x_j'|^{d_2(1/q - 1/2)} \, t^{- (d_1 + d_2) (1/p - 1/2)}\,  \\
     & \hspace{7cm} |x_j'|^{-d_2 (1/p -1/2)}\, \|\delta_{t^{-1}}(\psi F^{(l)})(t \cdot)\|_{L^2} \|P_{\widetilde{B}_j} f\|_{L^p}\\
     & \leq C \|b\|_{BMO^{\varrho}(\mathbb{R}^d)}\, (2^l t)^{(d_1 + d_2) (1/q - 1/2)} |x_j'|^{d_2(1/q - 1/2)} \, t^{- (d_1 + d_2) (1/p - 1/2)}  |x_j'|^{-d_2 (1/p -1/2)}\, \\
     & \hspace{6cm}  \|\psi F^{(l)}\|_{L^2} \, (2^l t)^{(d_1 + d_2) (1/p - 1/q)} |x_j'|^{d_2(1/p - 1/q)} \|P_{\widetilde{B}_j} f\|_{L^q}\\
     & \leq C \|b\|_{BMO^{\varrho}(\mathbb{R}^d)}\, 2^{l(d_1 +d _2) (1/p - 1/2)}\, 2^{-\beta l}\|F\|_{W^2_{\beta}} \|P_{\widetilde{B}_j} f\|_{L^q} .
\end{align*}
Therefore, for $\beta>d(1/p-1/2)$, we have 
\begin{align*}
    S_{11} &\leq C  2^{-l \varepsilon} \|b\|_{BMO^{\varrho}(\mathbb{R}^d)} \|F\|_{W^2_{\beta}} \|f\|_{L^q},
\end{align*}
for some $\varepsilon > 0$.

\underline{Case II:} Assume $|x_j'| < 2^{l+2} t$.

Let $\Theta$ be as in (\ref{Definition: Cutoff function chi}). Define the function $F^{(l)}_M : \mathbb{R} \times \mathbb{R} \to \mathbb{C}$ by setting
\begin{align*}
    F^{(l)}_M (\kappa, r) &= (\psi F^{(l)}) (t\sqrt{\kappa}) \,\Theta_{M}(\kappa /r)  \quad \text{for} \quad \kappa \geq 0, r \neq 0
\end{align*}
and $F^{(l)}_M (\kappa , r) = 0$ else. Then we can write 
\begin{align}
\label{Equation: Truncation along spectrum for S_{11}}
    P_{4 B_j}\, (b - b_{B_j})\, (\psi F^{(l)})(t\sqrt{\mathcal{L}}) P_{\widetilde{B}_j} f &=  P_{4 B_j}\, (b - b_{B_j}) \, \left( \sum_{M=0 } ^l + \sum_{M=l+ 1 } ^{\infty}\right) F^{(l)}_M (\mathcal{L}, T) P_{\widetilde{B}_j} f \\
    & \nonumber =: g_{j, \leq l}  + g_{j, > l} .
\end{align}
Note that we have $d_1 \geq 2$, because of the range of $p$. Thus $\Theta_{M}([k])=0$ for $M \leq 0$. The second term $g_{j,>l}$ can be easily controlled by using H\"older's inequality, (\ref{Inequality: BMO norm calculation}), \Cref{Theorem: Restriction Estimate} and again using H\"older's inequality. Indeed
\begin{align}
\label{Estimate: g_{j,>l} for S_{11}}
   & \| g_{j, > l}\|_{L^q}\\
   & \nonumber \leq  \|P_{4 B_j}\, (b - b_{B_j})\|_{L^{\frac{2q}{2-q}}}  \left\|\sum_{M=l+ 1 } ^{\infty} F^{(l)}_M (\mathcal{L}, T) P_{\widetilde{B}_j} f  \right\|_{L^2} \\
   & \nonumber \leq C \|b\|_{BMO^{\varrho}(\mathbb{R}^d)} (2^l t) ^{Q(1/q - 1/2)}\, t^{-Q(1/p - 1/2)}\, 2^{-ld_2 (1/p - 1/2)}\, \|\delta_{t^{-1}}(\psi F^{(l)})(t \cdot)\|_{L^2}\, \|P_{\widetilde{B}_j} f\|_{L^p} \\
   & \nonumber \leq C \|b\|_{BMO^{\varrho}(\mathbb{R}^d)} (2^l t) ^{Q(1/q - 1/2)}\, t^{-Q(1/p - 1/2)}\, 2^{-ld_2 (1/p - 1/2)}\, \|\psi F^{(l)}\|_{L^2}\, (2^lt)^{Q(1/p - 1/q)}\, \|P_{\widetilde{B}_j} f\|_{L^q}\\
   & \nonumber \leq C \|b\|_{BMO^{\varrho}(\mathbb{R}^d)} 2^{ld(1/p - 1/2)}\,2^{-\beta l} \|F\|_{W^2_{\beta}} \, \|P_{\widetilde{B}_j} f\|_{L^q}.
\end{align}
It remains to estimate the term $g_{j, \leq l}$. As we have assumed $|x_j'| < 2^{l+2} t$, notice that in this case, from (\ref{Decomposition of balls for small radius}) we have $\widetilde{B}_j \subseteq B(x_j, 2^l t) \subseteq B^{|\cdot|} (x_j', 2^l t) \times B^{|\cdot|} (x_j'', C 2^{2l} t^2)$.

For each $0 \leq M \leq l$, we decompose 
\begin{align*}
    \widetilde{B}_j = \bigcup_{m = 1}^{N_M} \widetilde{B}_{j, m} ^M ,
\end{align*}
where $\widetilde{B}_{j, m} ^M \subseteq B^{|\cdot|} (x_j', 2^l t) \times B^{|\cdot|} (x_{j,m}^M, C 2^Mt  \cdot 2^{l} t) $ are disjoint subsets and  
$|x_{j,m}^M- x_{j,m'}^M| >2^M t \cdot  2^{l} t/2 $ for $m \neq m'$. One can see that $N_M \leq C 2^{(l-M)d_2}$. Moreover, for given $\gamma>0$ the number $N_{\gamma}$ of overlapping ball 
\begin{align*}
    \widetilde{B_{j, m} ^M} := B^{|\cdot|} (x_j', 2^{l+2} t) \times B^{|\cdot|} (x_{j,m}^M, C 2^{\gamma l +1} 2^Mt  \cdot 2^{l} t), \quad 1 \leq m \leq N_M 
\end{align*}
can be bounded by $N_{\gamma} \leq C 2^{C\gamma l}$. 

With the aid of this decomposition, we further decompose $f$ as $\displaystyle{P_{\widetilde{B}_j} f = \sum _{m = 1} ^{N_M} P_{\widetilde{B}_{j, m} ^M} f }$. Let us set $g_{j, m} ^M := P_{4 B_j}\, (b - b_{B_j}) \, F^{(l)}_M (\mathcal{L}, T) P_{\widetilde{B}_{j, m} ^M} f$ and write 
\begin{align*}
    g_{j, \leq l} &= \sum_{M= 0} ^l \sum_{m= 1} ^{N_M} \chi_{\widetilde{B_{j, m} ^M}} g_{j, m} ^M + \sum_{M= 0} ^l \sum_{m= 1} ^{N_M}(1 - \chi_{\widetilde{B_{j, m} ^M}}) g_{j, m} ^M ,
\end{align*}
where $\chi_{\widetilde{B_{j, m} ^M}}$ is the characteristic function of the set $\widetilde{B_{j, m} ^M}$.

Let us estimate the first term. In view of H\"older's inequality and bounded overlapping property of the balls $\widetilde{B_{j, m} ^M}$, we obtain that
\begin{align}
 \label{inequality of first term in S_{11}} \left\|  \sum_{M= 0} ^l \sum_{m= 1} ^{N_M} \chi_{\widetilde{B_{j, m} ^M}} g_{j, m} ^M \right\|_{L^q} ^q & \leq C ((l+ 1)N_{\gamma})^{q-1} \sum_{M= 0} ^l \sum_{m= 1} ^{N_M} \left\|  \chi_{\widetilde{B_{j, m} ^M}} g_{j, m} ^M \right\|_{L^q} ^q .
\end{align}
Choose $s$ such that $q<s<2$. Now, using H\"older's inequality together with (\ref{Inequality: BMO norm calculation}) and (\ref{Theorem: Restriction Estimate}) and again H\"older's inequality, we see that
\begin{align*}
    & \left\|  \chi_{\widetilde{B_{j, m} ^M}} g_{j, m} ^M \right\|_{L^q}\\
    & \leq \left\|  P_{4 B_j}\, (b - b_{B_j})  \right\|_{L^{\frac{sq}{s-q}}}\, \left\| \chi_{\widetilde{B_{j, m} ^M}} F^{(l)}_M (\mathcal{L}, T) P_{\widetilde{B}_{j, m} ^M} f \right\|_{L^s}\\
    & \leq C (2^l t)^{Q(1/q - 1/s)} \, \|b\|_{BMO^{\varrho}(\mathbb{R}^d)} (2^l t)^{d_1(1/s - 1/2)}\, (2^{\gamma l} 2^M t \cdot 2^l t)^{d_2(1/s - 1/2)} \left\|  F^{(l)}_M (\mathcal{L}, T) P_{\widetilde{B}_{j, m} ^M} f \right\|_{L^2}\\
    & \leq C (2^l t)^{Q(1/q - 1/s)} \, \|b\|_{BMO^{\varrho}(\mathbb{R}^d)} (2^l t)^{d_1(1/s - 1/2)}\, (2^{\gamma l} 2^M t \cdot 2^l t)^{d_2(1/s - 1/2)}  t^{-Q(1/p - 1/2)}  \\
    & \hspace{8cm} 2^{-Md_2(1/p - 1/2)}\, \|\delta_{t^{-1}}(\psi F^{(l)})(t \cdot)\|_{L^2}\, \|P_{\widetilde{B}_{j, m} ^M} f\|_{L^p}\\
    & \leq C (2^l t)^{Q(1/q - 1/s)} \, \|b\|_{BMO^{\varrho}(\mathbb{R}^d)} (2^l t)^{d_1(1/s - 1/2)}\, (2^{\gamma l} 2^M t \cdot 2^l t)^{d_2(1/s - 1/2)}  t^{-Q(1/p - 1/2)} 2^{-Md_2(1/p - 1/2)}\,  \\
    & \hspace{6cm} \|\psi F^{(l)}\|_{L^2}\, (2^l t)^{d_1(1/p - 1/q)} (2^M t\cdot  2^l t)^{d_2(1/p - 1/q)}\, \|P_{\widetilde{B}_{j, m} ^M} f\|_{L^q}\\
    & \leq C 2^{l\varepsilon} 2^{ld_2(1/q-1/s) + ld(1/p - 1/2)}\, 2^{-Md_2(1/q - 1/s)}\,  \|b\|_{BMO^{\varrho}(\mathbb{R}^d)} \, \|F^{(l)}\|_{L^2} \|P_{\widetilde{B}_{j, m} ^M} f\|_{L^q}\, \\
    & \leq C  2^{ ld(1/p - 1/2) }\, 2^{l\varepsilon}\, 2^{-l\beta} \, \|b\|_{BMO^{\varrho}(\mathbb{R}^d)} \,  \|F\|_{W_{\beta} ^2} \|P_{\widetilde{B}_{j, m} ^M} f\|_{L^q}, 
\end{align*}
for some $\varepsilon>0$, by choosing $s$ very close to $q$ and $\gamma>0$ small enough.

Substituting this estimate into (\ref{inequality of first term in S_{11}}), we get
\begin{align}
\label{Estimate: g_{j, <l} for S_{11} main part}
  &  \left\|  \sum_{M= 0} ^l \sum_{m= 1} ^{N_M} \chi_{\widetilde{B_{j, m} ^M}} g_{j, m} ^M \right\|_{L^q} ^q\\
  & \nonumber \leq C ((l+ 1)N_{\gamma})^{q-1} \sum_{M= 0} ^l \sum_{m= 1} ^{N_M} 2^{ ldq (1/p - 1/2)} 2^{lq\varepsilon}\, 2^{-l\beta q} \|b\|_{BMO^{\varrho}(\mathbb{R}^d)} ^q\,  \|F\|_{W_{\beta} ^2} ^q\, \|P_{\widetilde{B}_{j, m} ^M} f\|_{L^q} ^q\\
  & \nonumber \leq C 2^{ ldq (1/p - 1/2) + lq \varepsilon - l \beta q}\, \|b\|_{BMO^{\varrho}(\mathbb{R}^d)} ^q\, \|F\|_{W_{\beta} ^2} ^q\, \|P_{\widetilde{B}_j} f\|_{L^q} ^q .
\end{align}
We are left with the estimation of the remainder term $\displaystyle{\sum_{M= 0} ^l \sum_{m= 1} ^{N_M}(1 - \chi_{\widetilde{B_{j, m} ^M}}) g_{j, m} ^M }$. Using H\"older's inequality, we get
\begin{align}
\label{Inequality: Error part 1-chi for S_11}
   & \left\| \sum_{M= 0} ^l \sum_{m= 1} ^{N_M}(1 - \chi_{\widetilde{B_{j, m} ^M}}) g_{j, m} ^M  \right\|_{L^q} \\
   &\nonumber \leq \left \|  P_{4B_j}(b - b_{B_j})  \right\|_{L^{\frac{2q}{2-q}}} \left \| P_{4B_j} \sum_{M=0} ^l \sum_{m= 1} ^{N_M} ( 1- \chi_{\widetilde{B_{j, m} ^M}}) F^{(l)} _M (\mathcal{L}, T) P_{\widetilde{B}_{j, m} ^M} f    \right \|_{L^2} .
\end{align}
Now, in order to get our desired estimates, we investigate $L^p \rightarrow L^2$ boundedness property of the operator $\displaystyle{ P_{4B_j} \sum_{M=0} ^l \sum_{m= 1} ^{N_M} ( 1- \chi_{\widetilde{B_{j, m} ^M}}) F^{(l)} _M (\mathcal{L}, T) P_{\widetilde{B}_{j, m} ^M} }$ for $1 \leq p < 2$.

For $L^2 \rightarrow L^2$ boundedness, using Plancherel theorem, H\"older's inequality and upper bound of $N_M$(calculated above) we get 
\begin{align*}
\left \|P_{4B_j} \sum_{M=0} ^l \sum_{m= 1} ^{N_M} ( 1- \chi_{\widetilde{B_{j, m} ^M}}) F^{(l)} _M (\mathcal{L}, T) P_{\widetilde{B}_{j, m} ^M} f    \right \|_{L^2} & \leq C \|\psi F^{(l)}\|_{L^{\infty}} \sum_{M= 0} ^l \sum_{m = 1} ^{N_M}\| P_{\widetilde{B}_{j, m} ^M} f \|_{L^2}\\
& \leq C \|F^{(l)} \|_{L^{2}_{1/2+\varepsilon}} (l+1) \,2^{ld_2/2}\,\|P_{\widetilde{B}_{j} } f \|_{L^2} \\
& \leq C \|F^{(l)} \|_{L^{2}}\, 2^{l\varepsilon} 2^{l(1 + d_2)/2} \, \|P_{\widetilde{B}_{j} } f \|_{L^2},
\end{align*}
where the second inequality follows from the Sobolev embedding theorem.

For $L^1 \rightarrow L^2$ boundedness,  we analyze the kernel of the operator $F^{(l)} _M (\mathcal{L}, T) $. Using \Cref{reprsentation of kernel}, we can write
\begin{align*}
 &  P_{4B_j} \sum_{M=0} ^l \sum_{m= 1} ^{N_M} ( 1- \chi_{\widetilde{B_{j, m} ^M}}) F^{(l)} _M (\mathcal{L}, T) P_{\widetilde{B}_{j, m} ^M} f(x) \\
 & = P_{4B_j} \sum_{M=0} ^l \sum_{m= 1} ^{N_M} ( 1- \chi_{\widetilde{B_{j, m} ^M}}) \int_{\mathbb{R}^d} K_{F^{(l)}_M(\mathcal{L}, T)}(x, y) P_{\widetilde{B}_{j, m} ^M} f(y) \, dy.
\end{align*}
Consider the set
\begin{align*}
    B^y _j := \{x \in B (x_j, 2^{l+2} t): |x'' - y''| \geq  C 2^{\gamma l} 2^M t \cdot 2^l t   \}.
\end{align*}
Notice that  for any $x\in \supp ( 1- \chi_{\widetilde{B_{j, m} ^M}}) \chi_{B(x_j, 2^{l+2} t)} $ and  for any $y \in \supp P_{\widetilde{B}_{j, m} ^M} f$, we have $|x'' - x_{j, m} ^M| \geq C 2^{\gamma l +1} 2^M t \cdot 2^{l} t $ and $|y'' - x_{j, m} ^M| \leq C 2^M t \cdot 2^l t $ respectively. Therefore $|x'' - y''| \geq  C 2^{\gamma l} 2^M t \cdot 2^l t $, that is $x \in B^y _j$.

The above observation together with \Cref{Theorem: Weighted Plancherel in second variable} implies that
\begin{align}
\label{Estimate: L1 L2 kernel estimate in S_11}
    \Bigl( \int_{B^y _j} |K_{F^{(l)}_M(\mathcal{L}, T)}(x,y)|^2 \, dx \Bigr)^{1/2} & \leq C (2^{\gamma l}\, 2^M t \cdot 2^l t)^{-N}\, \Bigl( \int_{B^y _j} \left| |x'' - y''|^N\, K_{F^{(l)}_M(\mathcal{L}, T)}(x,y) \right|^2 \, dx \Bigr)^{1/2}\\
    & \nonumber \leq C (2^{\gamma l}\, 2^M t \cdot 2^l t)^{-N}\, 2^{M(N - d_2/2)}\, t^{2N}\, t^{-Q/2}\, \| \delta_{t^{-1}} (\psi F^{(l)}) (t\cdot)\|_{L^2_N}\\
    & \nonumber \leq C 2^{-\gamma l N} t^{-Q/2}\, \|F^{(l)}\|_{L^2}.
\end{align}
This estimate, along with Minkowski's integral inequality implies that
\begin{align*}
    & \left \| P_{4B_j} \sum_{M=0} ^l \sum_{m= 1} ^{N_M} ( 1- \chi_{\widetilde{B_{j, m} ^M}}) F^{(l)} _M (\mathcal{L}, T) P_{\widetilde{B}_{j, m} ^M} f    \right \|_{L^2} \\
    & \leq  \sum_{M=0} ^l \sum_{m= 1} ^{N_M} \int_{\widetilde{B}_{j, m} ^M} | f(y)| \, \Bigl( \int_{B^y _j} |K_{F^{(l)}_M(\mathcal{L}, T)}(x,y)|^2 \, dx \Bigr)^{1/2}\, dy\\
    &  \leq C  \sum_{M=0} ^l \sum_{m= 1} ^{N_M} \int_{\widetilde{B}_{j, m} ^M} | f(y)| \, 2^{-\gamma l N} t^{-Q/2}\, \|F^{(l)}\|_{L^2}\, dy\\
    & \leq C 2^{l\varepsilon} 2^{-\gamma l N} t^{-Q/2}\, \|F^{(l)}\|_{L^2}\, \|P_{\widetilde{B}_{j} } f \|_{L^1}.
\end{align*}
Interpolating between the $L^2 \rightarrow L^2$ and $L^1 \rightarrow L^2$ estimates obtained above yields,
\begin{align}
\label{Interpolation in error part for S11}
    &  \left \| P_{4B_j} \sum_{M=0} ^l \sum_{m= 1} ^{N_M} ( 1- \chi_{\widetilde{B_{j, m} ^M}}) F^{(l)} _M (\mathcal{L}, T) P_{\widetilde{B}_{j, m} ^M} f \right \|_{L^2} \\
   \nonumber &\leq C t^{-Q(1/p - 1/2)}\, 2^{-2\gamma l N(1/p - 1/2)}\, 2^{l\varepsilon} 2^{l (1 + d_2)(1-1/p)}\, \|F^{(l)}\|_{L^2}\, \|P_{\widetilde{B}_{j} } f \|_{L^p}.
\end{align}
By this estimate from (\ref{Inequality: Error part 1-chi for S_11}) and using (\ref{Inequality: BMO norm calculation}), we see that
\begin{align}
\label{Estimate: g_{j,<l} for S_{11} error part}
   & \left\| \sum_{M= 0} ^l \sum_{m= 1} ^{N_M}(1 - \chi_{\widetilde{B_{j, m} ^M}}) g_{j, m} ^M  \right\|_{L^q} \\
   & \nonumber \leq C (2^l t)^{Q(1/q - 1/2)}\, \|b\|_{BMO^{\varrho}(\mathbb{R}^d)} \, t^{-Q(1/p - 1/2)}\, 2^{-2\gamma l N(1/p - 1/2)}\,  \\
   & \nonumber \hspace{7cm} 2^{l\varepsilon} 2^{l (1 + d_2)(1-1/p)}\, \|F^{(l)}\|_{L^2}\, \|P_{\widetilde{B}_{j} } f \|_{L^p} \\
   & \nonumber \leq C (2^l t)^{Q(1/q - 1/2)}\, \|b\|_{BMO^{\varrho}(\mathbb{R}^d)} \, t^{-Q(1/p - 1/2)}\, 2^{-2\gamma l N(1/p - 1/2)}\, \\
   & \nonumber \hspace{5cm} 2^{l\varepsilon} 2^{l (1 + d_2)(1-1/p)}\, 2^{-l\beta} \|F\|_{W^2 _{\beta}}\, (2^l t)^{Q(1/p - 1/q)}\, \|P_{\widetilde{B}_{j} } f \|_{L^q}\\
   & \nonumber \leq C 2^{lQ(1/p - 1/2)}\, 2^{-2\gamma l N(1/p - 1/2)}\, 2^{l\varepsilon } 2^{l (1 + d_2)(1-1/p)}\, 2^{-l\beta} \|b\|_{BMO^{\varrho}(\mathbb{R}^d)} \|F\|_{W^2 _{\beta}}\, \|P_{\widetilde{B}_{j} } f \|_{L^q} \\
   & \nonumber \leq C 2^{-l\varepsilon} \|b\|_{BMO^{\varrho}(\mathbb{R}^d)} \|F\|_{W^2 _{\beta}} \|P_{\widetilde{B}_{j} } f \|_{L^q} ,
\end{align}
by choosing $N$ sufficiently large enough.

 As $\beta>d(1/p-1/2)$,  combining estimates obtained in (\ref{Estimate: g_{j,>l} for S_{11}}), (\ref{Estimate: g_{j, <l} for S_{11} main part}) and (\ref{Estimate: g_{j,<l} for S_{11} error part}), we conclude that
\begin{align*}
    S_{11} &\leq C 2^{-l\varepsilon} \|b\|_{BMO^{\varrho}(\mathbb{R}^d)} \|F\|_{W^2 _{\beta}}\, \|f \|_{L^q}.
\end{align*}

\underline{\emph{Estimation of $S_{12}$}:}
Next,  we turn our attention to $S_{12}$. Recall that 
\begin{align*}
 S_{12}=  C \left( \sum_j \|P_{4 B_j}\, (b - b_{B_j})\, ((1-\psi) F^{(l)}) (t\sqrt{\mathcal{L}}) P_{\widetilde{B}_j} f\|_{L^q}^q \right)^{1/q}.  
\end{align*}
We further decompose the function $1 -\psi$ as follows. Since $1-\psi$ is supported outside the interval $(1/8,2)$, we can choose a function $\phi\in C_c ^{\infty} (2,8)$ such that
\begin{align*}
    1 - \psi(\xi) = \sum_{\iota \geq 0} \phi(2^{-\iota} \xi) + \sum_{\iota \leq -6}  \phi(2^{-\iota} \xi) = \sum_{\iota \geq 0} \phi_{\iota} (\xi) + \sum_{\iota \leq -6} \phi_{\iota} (\xi), \quad \text{for all}\  \xi >0 .
\end{align*}
Therefore, by spectral theorem 
    \begin{equation}
        ((1 - \psi) F^{(l)})(t\sqrt{\mathcal{L}}) =\sum_{\iota \geq 0} (\phi_{\iota}  F^{(l)}) (t\sqrt{\mathcal{L}}) + \sum_{\iota \leq -6} (\phi_{\iota}  F^{(l)}) (t\sqrt{\mathcal{L}})\nonumber.
\end{equation} 
As before,  we consider the following two cases.

\underline{Case I:} Assume $|x_j'| \geq 2^{l+2} t$.

First,  using H\"older's inequality, we get
\begin{align*}
     & \|P_{4 B_j}\, (b - b_{B_j})\, ((1-\psi) F^{(l)})(t\sqrt{\mathcal{L}}) P_{\widetilde{B}_j} f\|_{L^q} \\
     & \leq \|P_{4 B_j}\, (b - b_{B_j}) \|_{L^{\frac{2q}{2-q}}} \| P_{4 B_j}\, ((1-\psi) F^{(l)})(t\sqrt{\mathcal{L}}) P_{\widetilde{B}_j} f \|_{L^2} .
\end{align*}
On the other hand,  from \Cref{Theorem: Restriction Estimate}, we see that
\begin{align*}
 & \| ((1- \psi) F^{(l)})(t\sqrt{\mathcal{L}}) P_{\widetilde{B}_j} f \|_{L^2}\\
 & \leq C \Bigl(\sum_{\iota \geq 0} + \sum_{\iota \leq -6}\Bigr)  \|  (\phi_{\iota} F^{(l)})(t\sqrt{\mathcal{L}}) P_{\widetilde{B}_j} f \|_{L^2} \\
 & \leq C \Bigl(\sum_{\iota \geq 0} + \sum_{\iota \leq -6}\Bigr) (2^{\iota} t^{-1})^{ (d_1 + d_2) (1/p - 1/2)}\, |x_j'|^{-d_2 (1/p -1/2)}\, \|\delta_{2^{\iota+3} t^{-1}}(\phi_{\iota}F^{(l)})(t\cdot)\|_{L^{\infty}} \|P_{\widetilde{B}_j} f\|_{L^p} .
\end{align*}
Notice that as $\supp{F} \subseteq [-1,1]$, $\supp{\phi} \subseteq [2,8] $ and $\check{\eta}$ is Schwartz class function, we have
\begin{align}
\label{Inequality: Calculation with cutoff phi}
 \|\phi_{\iota} F^{(l)}\|_{L^{\infty}} = 2^l \|\phi_{\iota} (F*\delta_{2^l} \check{\eta}) \|_{L^{\infty}} \leq C 2^{-N(l + \max{\{\iota,0}\})} \| F \|_{L^2} .
\end{align}
Using the above fact in the last expression, we further see that 
\begin{align*}
    & \| ((1- \psi) F^{(l)})(t\sqrt{\mathcal{L}}) P_{\widetilde{B}_j} f \|_{L^2}\\
    & \leq C \Bigl(\sum_{\iota \geq 0} + \sum_{\iota \leq -6}\Bigr) (2^{\iota} t^{-1})^{ (d_1 + d_2) (1/p - 1/2)}\, |x_j'|^{-d_2 (1/p -1/2)}\, 2^{-N(l + \max{\{\iota,0}\})} \| F \|_{L^2} \|P_{\widetilde{B}_j} f\|_{L^p} \\
    &\leq C t^{ -(d_1 + d_2) (1/p - 1/2)}\, |x_j'|^{-d_2 (1/p -1/2)}\, 2^{-Nl} \|  F \|_{L^2} \|P_{\widetilde{B}_j} f\|_{L^p} .
\end{align*}
Therefore, using the above estimate, (\ref{Inequality: BMO norm calculation}) and applying H\"older's inequality,  we obtain that
\begin{align*} 
     &\| P_{4 B_j}\, (b - b_{B_j})\, ((1- \psi) F^{(l)})(t\sqrt{\mathcal{L}}) P_{\widetilde{B}_j} f \|_{L^q}\\
     & \leq C \|b\|_{BMO^{\varrho}(\mathbb{R}^d)} |B(x_j, 2^{l+2} t)|^{(1/q - 1/2)}\,  \|  ((1-\psi) F^{(l)}) (t\sqrt{\mathcal{L}}) P_{\widetilde{B}_j} f \|_{L^2}\\
     & \leq C \|b\|_{BMO^{\varrho}(\mathbb{R}^d)}\, (2^l t)^{(d_1 + d_2) (1/q - 1/2)} |x_j'|^{d_2(1/q - 1/2)}  \,\\
     & \hspace{5cm} t^{ -(d_1 + d_2) (1/p - 1/2)}\, |x_j'|^{-d_2 (1/p -1/2)}\, 2^{-Nl} \|  F \|_{L^2} \|P_{\widetilde{B}_j} f\|_{L^p} \\
     & \leq C \|b\|_{BMO^{\varrho}(\mathbb{R}^d)}\, (2^l t)^{(d_1 + d_2) (1/q - 1/2)} |x_j'|^{d_2(1/q - 1/2)} \, t^{ -(d_1 + d_2) (1/p - 1/2)}\, |x_j'|^{-d_2 (1/p -1/2)}  \\
     & \hspace{5cm} 2^{-Nl}\|  F \|_{L^2}\, (2^l t)^{(d_1 + d_2) (1/p - 1/q)} |x_j'|^{d_2(1/p - 1/q)} \|P_{\widetilde{B}_j} f\|_{L^q}\\
     & \leq C \|b\|_{BMO^{\varrho}(\mathbb{R}^d)}\, 2^{l(d_1 +d _2) (1/p - 1/2)}\, 2^{-Nl} \|  F \|_{L^2}\, \|P_{\widetilde{B}_j} f\|_{L^q} .
\end{align*}
So that choosing $N$ large enough, we get
\begin{align*}
    S_{12} &\leq C 2^{-l\varepsilon} \|b\|_{BMO^{\varrho}(\mathbb{R}^d)} \|  F \|_{L^2}\, \| f\|_{L^q} .
\end{align*}

\underline{Case II:} Assume $|x_j'| < 2^{l+2} t$.

Again, let $\Theta$ be defined as in (\ref{Definition: Cutoff function chi}). Define the function $F^{(l,\iota)}_M$ as
\begin{align*}
    F^{(l,\iota)}_M (\kappa, r) &= (\phi_{\iota} F^{(l)}) (t\sqrt{\kappa}) \,\Theta_{M}(\kappa/r)  \quad \text{for} \quad \kappa \geq 0,\ r \neq 0
\end{align*}
and $F^{(l,\iota)}_M (\kappa, r) = 0$ else. This allows us to write
\begin{align*}
    & P_{4 B_j}\, (b - b_{B_j})\, ((1-\psi) F^{(l)})(t\sqrt{\mathcal{L}}) P_{\widetilde{B}_j} f \\
    &=  P_{4 B_j}\, (b - b_{B_j})\, \Bigl(\sum_{\iota \geq 0} + \sum_{\iota \leq -6}\Bigr) \, \left( \sum_{M=0 } ^l + \sum_{M=l+ 1 } ^{\infty}\right) F^{(l,\iota)}_M (\mathcal{L}, T) P_{\widetilde{B}_j} f \\
    &=:\Bigl(\sum_{\iota \geq 0} + \sum_{\iota \leq -6}\Bigr) g_{j, \leq l} ^{\iota} + \Bigl(\sum_{\iota \geq 0} + \sum_{\iota \leq -6}\Bigr)g_{j, > l} ^{\iota} .
\end{align*}
For the second term, first note that from \Cref{Theorem: Restriction Estimate} and (\ref{Inequality: Calculation with cutoff phi}), we have
\begin{align}
\label{Estimate of S_{12} case II second term}
    & \left\|P_{4 B_j} \Bigl(\sum_{\iota \geq 0} + \sum_{\iota \leq -6}\Bigr) \,  \sum_{M=l+ 1 } ^{\infty} F^{(l,\iota)}_M (\mathcal{L}, T) P_{\widetilde{B}_j} f  \right\|_{L^2} \\
    &\nonumber \leq \Bigl(\sum_{\iota\geq 0} + \sum_{\iota \leq -6}\Bigr) \left \|  \sum_{M=l+ 1 } ^{\infty} F^{(l,\iota)}_M (\mathcal{L}, T) P_{\widetilde{B}_j} f  \right\|_{L^2} \\
    &\nonumber \leq C \Bigl(\sum_{\iota \geq 0} + \sum_{\iota \leq -6}\Bigr) (2^{\iota} t^{-1})^{Q(1/p - 1/2)}\, 2^{-ld_2 (1/p - 1/2)}\, \|\delta_{2^{\iota+3} t^{-1}}(\phi_{\iota} F^{(l)})(t\cdot)\|_{L^{\infty}} \, \|P_{\widetilde{B}_j} f\|_{L^p} \\
    &\nonumber \leq C \Bigl(\sum_{\iota \geq 0} + \sum_{\iota \leq -6}\Bigr) (2^{\iota} t^{-1})^{Q(1/p - 1/2)}\, 2^{-ld_2 (1/p - 1/2)}\, 2^{-N(l + \max{\{\iota,0}\})} \|  F \|_{2} \, \|P_{\widetilde{B}_j} f\|_{L^p} \\
    &\nonumber \leq C t^{-Q(1/p - 1/2)}\, 2^{-ld_2 (1/p - 1/2)}\, 2^{-Nl}\|F\|_{L^{2}} \, \|P_{\widetilde{B}_j} f\|_{L^p} .
\end{align}
Therefore using H\"older's inequality, the above estimate, (\ref{Inequality: BMO norm calculation}), again H\"older's inequality and choosing $N$ large enough, we find that 
\begin{align}
\label{Estimate: g_{j,>l} for S_{12}}
   & \left\|\Bigl(\sum_{\iota \geq 0} + \sum_{\iota \leq -6}\Bigr)g_{j, > l} ^{\iota} \right\|_{L^q}\\
   & \nonumber \leq  \|P_{4 B_j}\, (b - b_{B_j})\|_{L^{\frac{2q}{2-q}}}  \left\| P_{4 B_j}\, \, \Bigl(\sum_{\iota \geq 0} + \sum_{\iota \leq -6}\Bigr) \,  \sum_{M=l+ 1 } ^{\infty} F^{(l,\iota)}_M (\mathcal{L}, T) P_{\widetilde{B}_j} f  \right\|_{L^2} \\
   & \nonumber \leq C \|b\|_{BMO^{\varrho}(\mathbb{R}^d)} (2^l t) ^{Q(1/q - 1/2)}\,  t^{-Q(1/p - 1/2)}\, 2^{-ld_2 (1/p - 1/2)}\, 2^{-N l} \|F\|_{L^2} \, \|P_{\widetilde{B}_j} f\|_{L^p} \\
   & \nonumber \leq C \|b\|_{BMO^{\varrho}(\mathbb{R}^d)} (2^l t) ^{Q(1/q - 1/2)}\,  t^{-Q(1/p - 1/2)}\, 2^{-ld_2 (1/p - 1/2)}\, 2^{-N l} \|F\|_{L^2} (2^l t)^{Q(1/p- 1/q)} \|P_{\widetilde{B}_j} f\|_{L^q}\\
   & \nonumber \leq C 2^{ld(1/p - 1/2)}\, 2^{-N l} \|b\|_{BMO^{\varrho}(\mathbb{R}^d)}  \|F\|_{L^2} \, \|P_{\widetilde{B}_j} f\|_{L^q}\\
   & \nonumber \leq C 2^{-l\varepsilon} \|b\|_{BMO^{\varrho}(\mathbb{R}^d)} \|F\|_{L^2} \, \|P_{\widetilde{B}_j} f\|_{L^q}.
\end{align}

For the first term $\Bigl(\sum_{\iota \geq 0} + \sum_{\iota \leq -6}\Bigr) g_{j, \leq l} ^{\iota}$, we utilize the decomposition applied in the estimation of $S_{11}$ for the term $g_{j, \leq l}$ of $ \widetilde{B}_j $ and $P_{\widetilde{B}_j} f$. With the same notation, let us set 
\begin{align*}
    g_{j, m} ^{\iota,M} = P_{4 B_j}\, (b - b_{B_j}) \, F^{(l,\iota)}_M (\mathcal{L}, T) P_{\widetilde{B}_{j, m} ^M} f ,
\end{align*}
and write 
\begin{align*}
   \Bigl(\sum_{\iota \geq 0} + \sum_{\iota \leq -6}\Bigr) g_{j, \leq l} ^{\iota} &= \Bigl(\sum_{\iota \geq 0} + \sum_{\iota \leq -6}\Bigr) \sum_{M= 0} ^l \sum_{m= 1} ^{N_M} \chi_{\widetilde{B_{j, m} ^M}} g_{j, m} ^{\iota,M} + \Bigl(\sum_{\iota \geq 0} + \sum_{\iota \leq -6}\Bigr) \sum_{M= 0} ^l \sum_{m= 1} ^{N_M}(1 - \chi_{\widetilde{B_{j, m} ^M}}) g_{j, m} ^{\iota,M} .
\end{align*}

From H\"older's inequality and the bounded overlapping property of the balls $\widetilde{B_{j, m} ^M}$, we see that
\begin{align*}
  \left\| \Bigl(\sum_{\iota \geq 0} + \sum_{\iota \leq -6}\Bigr)  \sum_{M= 0} ^l \sum_{m= 1} ^{N_M} \chi_{\widetilde{B_{j, m} ^M}} g_{j, m} ^{\iota, M} \right\|_{L^q} ^q & \leq C ((l+ 1)N_{\gamma})^{q-1} \sum_{M= 0} ^l \sum_{m= 1} ^{N_M} \left\| \Bigl(\sum_{\iota \geq 0} + \sum_{\iota \leq -6}\Bigr) \chi_{\widetilde{B_{j, m} ^M}} g_{j, m} ^{\iota,M} \right\|_{L^q} ^q .
\end{align*}

Now using H\"older's inequality together with (\ref{Inequality: BMO norm calculation}), \Cref{Theorem: Restriction Estimate} and (\ref{Inequality: Calculation with cutoff phi}) and again H\"older's inequality, we find that
\begin{align*}
    & \left\| \Bigl(\sum_{\iota \geq 0} + \sum_{\iota \leq -6}\Bigr) \chi_{\widetilde{B_{j, m} ^M}} g_{j, m} ^{\iota,M} \right\|_{L^q}\\
    & \leq \left\|  P_{4 B_j}\, (b - b_{B_j})  \right\|_{L^{\frac{sq}{s-q}}}\, \left\| \chi_{\widetilde{B_{j, m} ^M}} \Bigl(\sum_{\iota \geq 0} + \sum_{\iota \leq -6}\Bigr) F^{(l,\iota)}_M (\mathcal{L}, T) P_{\widetilde{B}_{j, m} ^M} f \right\|_{L^s}\\
    & \leq C (2^l t)^{Q(1/q - 1/s)} \, \|b\|_{BMO^{\varrho}(\mathbb{R}^d)} (2^l t)^{d_1(1/s - 1/2)}\, (2^{\gamma l} 2^M t \cdot 2^l t)^{d_2(1/s - 1/2)} \\
    & \hspace{8cm} \left\| \Bigl(\sum_{\iota \geq 0} + \sum_{\iota \leq -6}\Bigr) F^{(l,\iota)}_M (\mathcal{L}, T) P_{\widetilde{B}_{j, m} ^M} f \right\|_{L^2}\\
    & \leq C (2^l t)^{Q(1/q - 1/s)} \, \|b\|_{BMO^{\varrho}(\mathbb{R}^d)} (2^l t)^{d_1(1/s - 1/2)}\, (2^{\gamma l} 2^M t \cdot 2^l t)^{d_2(1/s - 1/2)} \Bigl(\sum_{\iota \geq 0} + \sum_{\iota \leq -6}\Bigr)  \\
    & \hspace{3cm} (2^{\iota} t^{-1})^{Q(1/p - 1/2)} 2^{-Md_2(1/p - 1/2)}\, \|\delta_{2^{\iota+3} t^{-1}}(\phi_{\iota} F^{(l)})(t\cdot)\|_{L^{\infty}} \,\, \|P_{\widetilde{B}_{j, m} ^M} f\|_{L^p} \\
    & \leq C (2^l t)^{Q(1/q - 1/s)} \, \|b\|_{BMO^{\varrho}(\mathbb{R}^d)} (2^l t)^{d_1(1/s - 1/2)}\, (2^{\gamma l} 2^M t \cdot 2^l t)^{d_2(1/s - 1/2)} 2^{-Md_2(1/p - 1/2)} \\
    & \hspace{4cm} \Bigl(\sum_{\iota \geq 0} + \sum_{\iota \leq -6}\Bigr)  (2^{\iota} t^{-1})^{Q(1/p - 1/2)} 2^{-N(l + \max{\{\iota,0}\})} \|  F \|_{L^2} \\
    & \hspace{6cm} (2^l t)^{d_1(1/p - 1/q)} (2^M t\cdot  2^l t)^{d_2(1/p - 1/q)}\, \|P_{\widetilde{B}_{j, m} ^M} f\|_{L^q} \\
    & \leq C (2^l t)^{Q(1/q - 1/s)} \, \|b\|_{BMO^{\varrho}(\mathbb{R}^d)} (2^l t)^{d_1(1/s - 1/2)}\, (2^{\gamma l} 2^M t \cdot 2^l t)^{d_2(1/s - 1/2)}  t^{-Q(1/p - 1/2)} \\
    & \hspace{3cm} 2^{-Md_2(1/p - 1/2)}\, 2^{-Nl} \|  F \|_{L^2}\, (2^l t)^{d_1(1/p - 1/q)} (2^M t\cdot  2^l t)^{d_2(1/p - 1/q)}\, \|P_{\widetilde{B}_{j, m} ^M} f\|_{L^q} \\
    &\leq C 2^{ ld(1/p - 1/2) }\, 2^{l\varepsilon}\,2^{-Nl}  \|b\|_{BMO^{\varrho}(\mathbb{R}^d)} \|  F \|_{L^2} \|P_{\widetilde{B}_{j, m} ^M} f\|_{L^q} ,
\end{align*}
for some $\varepsilon>0$, by choosing $s$ very close to $q$ and $\gamma>0$ small enough before.
 
Hence, 
\begin{align}
\label{Estimate: g_{j, <l} for S_{12} main part}
   & \left\| \Bigl(\sum_{i \geq 0} + \sum_{i \leq -6}\Bigr)  \sum_{M= 0} ^l \sum_{m= 1} ^{N_M} \chi_{\widetilde{B_{j, m} ^M}} g_{j, m} ^{i, M} \right\|_{L^q} ^q\\
   & \nonumber \leq C ((l+ 1)N_{\gamma})^{q-1} \sum_{M= 0} ^l \sum_{m= 1} ^{N_M} 2^{ ldq(1/p - 1/2) }\, 2^{lq\varepsilon}\,2^{-Nlq} \, \|b\|_{BMO^{\varrho}(\mathbb{R}^d)}^q \|  F \|_{L^2}^q \|P_{\widetilde{B}_{j, m} ^M} f\|_{L^q}^q \\
   & \nonumber \leq C 2^{-lq\varepsilon}  \|b\|_{BMO^{\varrho}(\mathbb{R}^d)} ^q\, \|F\|_{L^2} ^q\,\|P_{\widetilde{B}_j} f\|_{L^q} ^q,
\end{align}
by choosing $N$ sufficiently large.

Estimate for the remainder term $\displaystyle{\Bigl(\sum_{\iota \geq 0} + \sum_{\iota \leq -6}\Bigr) \sum_{M= 0} ^l \sum_{m= 1} ^{N_M}(1 - \chi_{\widetilde{B_{j, m} ^M}}) g_{j, m} ^{\iota,M}}$. In this case, we use argument similar to that used in the estimation of $S_{11}$ for the term $\displaystyle{\sum_{M= 0} ^l \sum_{m= 1} ^{N_M}(1 - \chi_{\widetilde{B_{j, m} ^M}}) g_{j, m} ^M }$. Let $K_{F^{(l,\iota)}_M (\mathcal{L}, T)}$ denote the kernel associated to the operator $F^{(l,\iota)}_M (\mathcal{L}, T) $. Applying \Cref{Theorem: Weighted Plancherel in second variable} for $N=0$, we get
\begin{align*}
    \Bigl( \int_{\mathbb{R}^d} |K_{F^{(l,\iota)}_M (\mathcal{L}, T)}(x,y)|^2 \, dx \Bigr)^{1/2} & \leq C \, 2^{-M d_2/2}\, (2^{\iota} t^{-1})^{Q/2}\, \| \delta_{2^{\iota +3}t^{-1}} (\phi_{\iota} F^{(l)})(t\cdot)\|_{L^{\infty}} \\
    & \leq  C \, 2^{\iota Q/2}  \, t^{-Q/2}\, \| \phi_{\iota} F^{(l)} \|_{L^{\infty}} .
\end{align*}

This estimate and an application of Minkowski's inequality, implies that 
\begin{align*}
    \left \| \sum_{M= 0} ^l \sum_{m= 1} ^{N_M}(1 - \chi_{\widetilde{B_{j, m} ^M}})  F^{(l,\iota)} _M (\mathcal{L}, T) P_{\widetilde{B}_{j, m} ^M} f \right \|_{L^2} & \leq  \,C \sum_{M=0} ^l \, 2^{\iota Q/2}  \, t^{-Q/2}\, \| \phi_{\iota} F^{(l)} \|_{L^{\infty}}\, \|P_{\widetilde{B}_{j} } f \|_{L^1}\\
     & \leq C  2^{l \varepsilon} \, 2^{\iota Q/2}  \, t^{-Q/2}\,  \| \phi_{\iota} F^{(l)} \|_{L^{\infty}}\, \|P_{\widetilde{B}_{j} } f \|_{L^1} .
\end{align*}
Also, using Plancherel theorem, H\"older's inequality and upper bound of $N_M$ we see that
\begin{align*}
\left \| \sum_{M= 0} ^l \sum_{m= 1} ^{N_M}(1 - \chi_{\widetilde{B_{j, m} ^M}})  F^{(l,\iota)}_M (\mathcal{L}, T) P_{\widetilde{B}_{j, m} ^M} f \right \|_{L^2} & \leq C \|\phi_{\iota} F^{(l)} \|_{L^{\infty}} \sum_{M= 0} ^l \sum_{m = 1} ^{N_M}\| P_{\widetilde{B}_{j, m} ^M} f \|_{L^2}\\
& \leq C \|\phi_{\iota} F^{(l)} \|_{L^{\infty}} \, 2^{l \varepsilon} 2^{l d_2/2} \, \|P_{\widetilde{B}_{j} } f \|_{L^2} .
\end{align*}
Hence, interpolating above results we get
\begin{align}
 \label{interpolation in remaindeder term for S_{12}}   &  \left \| \sum_{M= 0} ^l \sum_{m= 1} ^{N_M}(1 - \chi_{\widetilde{B_{j, m} ^M}})  F^{(l,\iota)}_M (\mathcal{L}, T) P_{\widetilde{B}_{j, m} ^M} f \right \|_{L^2}\\
    \nonumber&\leq C 2^{l\varepsilon }\, t^{-Q(1/p - 1/2)}\, 2^{\iota Q (1/p-1/2)} \, 2^{l d_2 (1-1/p)}\, \|\phi_{\iota} F^{(l)}\|_{L^{\infty}}\, \|P_{\widetilde{B}_{j} } f \|_{L^p}
\end{align}
for all $1\leq p < 2$.

Now using above estimate together with H\"older's inequality and (\ref{Inequality: BMO norm calculation}), (\ref{Inequality: Calculation with cutoff phi}) and again H\"older's inequality, we get
\begin{align}
\label{Inequality: Error part 1-chi for S_12}
   & \left \| P_{4 B_j}\, (b - b_{B_j}) \, \Bigl(\sum_{\iota \geq 0} + \sum_{\iota \leq -6}\Bigr) \sum_{M= 0} ^l \sum_{m= 1} ^{N_M}(1 - \chi_{\widetilde{B_{j, m} ^M}})  F^{(l,\iota)}_M (\mathcal{L}, T) P_{\widetilde{B}_{j, m} ^M} f  \right \|_{L^q} \\
   & \nonumber \leq C (2^l t)^{Q(1/q - 1/2)} \|b\|_{BMO^{\varrho}(\mathbb{R}^d)}\, \Bigl(\sum_{\iota \geq 0} + \sum_{\iota \leq -6}\Bigr)\left \| \sum_{M= 0} ^l \sum_{m= 1} ^{N_M}(1 - \chi_{\widetilde{B_{j, m} ^M}})  F^{(l,\iota)}_M (\mathcal{L}, T) P_{\widetilde{B}_{j, m} ^M} f  \right \|_{L^2}\\
    & \nonumber \leq C (2^l t)^{Q(1/q - 1/2)} \|b\|_{BMO^{\varrho}(\mathbb{R}^d)}\,\Bigl(\sum_{\iota \geq 0} + \sum_{\iota \leq -6}\Bigr) 2^{l\varepsilon }\,  t^{-Q(1/p - 1/2)}\, 2^{\iota Q (1/p-1/2)} \, 2^{l d_2 (1-1/p)} \\
    & \nonumber\hspace{9cm} 2^{-N(l + \max{\{\iota,0}\})} \,  \|F\|_{L^2}\, \|P_{\widetilde{B}_{j} } f \|_{L^p}\\
    & \nonumber \leq C (2^l t)^{Q(1/q - 1/2)}\, \|b\|_{BMO^{\varrho}(\mathbb{R}^d)} \, 2^{l\varepsilon }\,  t^{-Q(1/p - 1/2)}\,  2^{-Nl } \, 2^{l d_2 (1-1/p)}\, \\
   & \nonumber \hspace{8cm}  \|F\|_{L^2}\, (2^l t)^{Q(1/p - 1/q)} \|P_{\widetilde{B}_{j} } f \|_{L^q}\\
   & \nonumber \leq C 2^{lQ(1/p - 1/2)}\,2^{l\varepsilon }\, 2^{-Nl } 2^{l d_2 (1-1/p)}\, \|b\|_{BMO^{\varrho}(\mathbb{R}^d)} \|F\|_{L^2}\, \|P_{\widetilde{B}_{j} } f \|_{L^q} \\
   & \nonumber \leq C 2^{-l\varepsilon }\,  \|b\|_{BMO^{\varrho}(\mathbb{R}^d)} \|F\|_{L^2}\, \|P_{\widetilde{B}_{j} } f \|_{L^q},
\end{align}
by choosing $N$ large enough.

Finally, combining the estimates (\ref{Estimate: g_{j,>l} for S_{12}}), (\ref{Estimate: g_{j, <l} for S_{12} main part}) and (\ref{Inequality: Error part 1-chi for S_12}),  we conclude that
\begin{align*}
    S_{12} &\leq C 2^{-l\varepsilon} \|b\|_{BMO^{\varrho}(\mathbb{R}^d)} \|F\|_{L^2}\, \|f \|_{L^q}.
\end{align*}

\underline{\emph{Estimation of $S_2$}:}

In this situation, we modified slightly the argument  used in the estimation of $S_1$. So, for the shake of brevity, we will be concise as much as possible. Let $\psi$ be as in $S_1$. Then, we have that
\begin{align*}
    S_2^{1/q} & \leq C \left( \sum_j \|P_{4 B_j}\,  (\psi F^{(l)})(t\sqrt{\mathcal{L}}) P_{\widetilde{B}_j} (b - b_{B_j}) f\|_{L^q}^q \right)^{1/q} \\
    & \hspace{5cm} + \left( \sum_j \|P_{4 B_j}\, ((1-\psi) F^{(l)}) (t\sqrt{\mathcal{L}}) P_{\widetilde{B}_j} (b - b_{B_j})f\|_{L^q}^q \right)^{1/q} \\
    & := S_{21} + S_{22}.
\end{align*}

\underline{\emph{Estimation of $S_{21}$}:}
As earlier, we consider the following two cases.

\underline{Case I:} Assume $|x_j'| \geq 2^{l+2} t$.

Applying H\"older's inequality, \Cref{Theorem: Restriction Estimate}, again H\"older's inequality and (\ref{Inequality: BMO norm calculation}), we have
\begin{align*}    
     & \|P_{4 B_j}\, (\psi F^{(l)})(t\sqrt{\mathcal{L}}) P_{\widetilde{B}_j} (b - b_{B_j})f\|_{L^q} \\
     & \leq C  |B(x_j, 2^{l+2} t)|^{1/q - 1/2}\,  \|  (\psi F^{(l)}) (t\sqrt{\mathcal{L}}) P_{\widetilde{B}_j} \, (b - b_{B_j}) f \|_{L^2}\\
     & \leq C  (2^l t)^{(d_1 + d_2) (1/q - 1/2)} |x_j'|^{d_2(1/q - 1/2)} \, t^{- (d_1 + d_2) (1/p - 1/2)}\, |x_j'|^{-d_2 (1/p -1/2)}\, \\
     & \hspace{7cm} \|\delta_{t^{-1}}(\psi F^{(l)})(t \cdot)\|_{L^2} \|P_{\widetilde{B}_j} \, (b - b_{B_j}) f\|_{L^p}\\
     & \leq C (2^l t)^{(d_1 + d_2) (1/q - 1/2)} |x_j'|^{d_2(1/q - 1/2)} \, t^{- (d_1 + d_2) (1/p - 1/2)}\, |x_j'|^{-d_2 (1/p -1/2)}\, \|\psi F^{(l)}\|_2 \, \\
     & \hspace{7cm}\|P_{\widetilde{B}_j}(b- b_{B_j})\|_{L^{\frac{qp}{q-p}}}\, \|P_{\widetilde{B}_j} \,  f\|_{L^q} \\
    & \leq C 2^{l(d_1 + d_2) (1/p - 1/2)} \, 2^{-\beta l}\, \|b\|_{BMO^{\varrho}(\mathbb{R}^d)} \| F\|_{W^2 _{\beta}}  \|P_{\widetilde{B}_j} \,  f\|_{L^q} .
\end{align*}

Therefore as $\beta>d(1/p-1/2)$, we get
\begin{align*}
    S_{21} &\leq C 2^{- l \varepsilon} \|b\|_{BMO^{\varrho}(\mathbb{R}^d)} \|F\|_{W^2_{\beta}} \|f\|_{L^q} .
\end{align*}

\underline{Case II:} Assume $|x_j'| < 2^{l+2} t$.

As in case II of estimation of $S_{11}$, we  write 
\begin{align*}
    P_{4 B_j}\, (\psi F^{(l)})(t\sqrt{\mathcal{L}}) P_{\widetilde{B}_j}\, (b - b_{B_j}) f &=  P_{4 B_j} \, \left( \sum_{M=0 } ^l + \sum_{M=l+ 1 } ^{\infty}\right) F^{(l)}_M (\mathcal{L}, T) P_{\widetilde{B}_j} (b - b_{B_j})f \\
    &=: \Tilde{g}_{j, \leq l}  + \Tilde{g}_{j, > l} .
\end{align*}

By an argument similar to that used in the estimate for the term $g_{j,>l}$  of $S_{11}$, using H\"older's inequality, \Cref{Theorem: Restriction Estimate}, (\ref{Inequality: BMO norm calculation}) we conclude that
\begin{align}
\label{Estimate: g_{j,>l} for S_{21}}
   & \| \Tilde{g}_{j, > l}\|_{L^q}\\
   & \nonumber  \leq C  (2^l t) ^{Q(1/q - 1/2)}\, t^{-Q(1/p - 1/2)}\, 2^{-ld_2 (1/p - 1/2)}\, \|F^{(l)}\|_{L^2} \\
   & \hspace{7cm} \nonumber (2^l t) ^{Q(1/p - 1/q)}\, \|b\|_{BMO^{\varrho}(\mathbb{R}^d)}  \|P_{\widetilde{B}_j} f\|_{L^q} \\
   & \nonumber  \leq C \|b\|_{BMO^{\varrho}(\mathbb{R}^d)} 2^{ld(1/p - 1/2)}\,2^{-\beta l} \|F\|_{W^2_{\beta}} \, \|P_{\widetilde{B}_j} f\|_{L^q} .
\end{align}

Next, we settle the term $\Tilde{g}_{j, \leq l}$. Let $\widetilde{B}_{j, m} ^M$, $\chi_{\widetilde{B_{j, m} ^M}}$ and the decomposition of $f$ be the same as in Case II of estimation of $S_{11}$. Also set $\Tilde{g}_{j, m} ^M = P_{4 B_j}\, F^{(l)}_M (\mathcal{L}, T) P_{\widetilde{B}_{j, m} ^M} (b - b_{B_j})  f $ and write 
\begin{align*}
    \Tilde{g}_{j, \leq l} &= \sum_{M= 0} ^l \sum_{m= 1} ^{N_M} \chi_{\widetilde{B_{j, m} ^M}} \Tilde{g}_{j, m} ^M + \sum_{M= 0} ^l \sum_{m= 1} ^{N_M}(1 - \chi_{\widetilde{B_{j, m} ^M}}) \Tilde{g}_{j, m} ^M .
\end{align*}

For the first term, choose $r$ such that $p < r < q < 2$. Then using H\"older's inequality together with \Cref{Theorem: Restriction Estimate}, (\ref{Inequality: BMO norm calculation}) and H\"older's inequality,  we obtain that
\begin{align*}
    & \left\|  \chi_{\widetilde{B_{j, m} ^M}} \Tilde{g}_{j, m} ^M \right\|_{L^q} = \left\| \chi_{\widetilde{B_{j, m} ^M}} F^{(l)}_M (\mathcal{L}, T) P_{\widetilde{B}_{j, m} ^M} (b - b_{B_j}) f \right\|_{L^q} \\
   & \leq C |\widetilde{B_{j, m} ^M}|^{1/q - 1/2}  \left \|   F^{(l)}_M (\mathcal{L}, T) P_{\widetilde{B}_{j, m} ^M} (b - b_{B_j}) f \right\|_{L^2} \\
   &\leq C  (2^l t)^{d_1 (1/q - 1/2)}\, (2^{\gamma l} 2^M t\cdot 2^l t)^{d_2(1/q - 1/2)}\,t^{-Q(1/p - 1/2)}\, 2^{-M d_2 (1/p - 1/2)}\,\\
   &\hspace{6cm} \|\delta_{t^{-1}}(\psi F^{(l)})(t \cdot)\|_{L^2}\, \|P_{\widetilde{B}_{j, m} ^M} (b - b_{B_j}) f\|_{L^p} \\
   &\leq C  (2^l t)^{d_1 (1/q - 1/2)}\, (2^{\gamma l} 2^M t\cdot 2^l t)^{d_2(1/q - 1/2)}\,t^{-Q(1/p - 1/2)}\, 2^{-M d_2 (1/p - 1/2)}\,\\
   &\hspace{6cm} \|\psi F^{(l)}\|_{L^2}\, \|P_{4B_j} (b - b_{B_j}) \|_{L^{\frac{rp}{r-p}}}  \, \|P_{\widetilde{B}_{j, m} ^M} f\|_{L^r}  \\
   & \leq C (2^l t)^{d_1 (1/q - 1/2)}\, (2^{\gamma l} 2^M t\cdot 2^l t)^{d_2(1/q - 1/2)}\,t^{-Q(1/p - 1/2)}\, 2^{-M d_2 (1/p - 1/2)}\,\\
   &\hspace{2cm}\|F^{(l)}\|_{L^2}\, (2^l t )^{Q(1/p -1/r)}\, \|b\|_{BMO^{\varrho}(\mathbb{R}^d)} (2^l t)^{d_1(1/r - 1/q)}\, (2^Mt\cdot 2^l t)^{d_2(1/r - 1/q)} \,\|P_{\widetilde{B}_{j, m} ^M} f\|_{L^q}\\
   & \leq C 2^{l\varepsilon} 2^{lQ(1/p-1/r) + ld( 1/r - 1/2)}\, \|F^{(l)}\|_{L^2}\,  2^{-Md_2(1/p - 1/r)} \|b\|_{BMO^{\varrho}(\mathbb{R}^d)}\, \|P_{\widetilde{B}_{j, m} ^M} f\|_{L^q}\\
   & \leq  C 2^{l\varepsilon + ld( 1/p - 1/2)}\, 2^{-\beta l}\, \|F\|_{W^2_{\beta}} \|b\|_{BMO^{\varrho}(\mathbb{R}^d)}\,  \|P_{\widetilde{B}_{j, m} ^M} f\|_{L^q},
\end{align*}
for some $\varepsilon>0$, by choosing $r$ very close to $p$ and $\gamma>0$ small enough.

As in the Case II of estimation of $S_{11}$ from the above estimate together with H\"older's inequality and bounded overlapping property of the balls $\widetilde{B_{j, m} ^M}$, we obtain
\begin{align}
\label{Estimate: g_{j, <l} for S_{21} main part}
  \left\|  \sum_{M= 0} ^l \sum_{m= 1} ^{N_M} \chi_{\widetilde{B_{j, m} ^M}} \Tilde{g}_{j, m} ^M \right\|_{L^q} ^q &\leq C 2^{lq\varepsilon} 2^{ ldq (1/p - 1/2) - l \beta q}\, \|b\|_{BMO^{\varrho}(\mathbb{R}^d)} ^q\, \|F\|_{W_{\beta} ^2} ^q\, \|P_{\widetilde{B}_j} f\|_{L^q} ^q .
\end{align}

We now proceed with the estimation of the remainder term $\sum_{M= 0} ^l \sum_{m= 1} ^{N_M}(1 - \chi_{\widetilde{B_{j, m} ^M}}) \Tilde{g}_{j, m} ^M$. By arguments similar to those used in proving (\ref{Interpolation in error part for S11}), we get
\begin{align*}
    &  \left \| P_{4B_j} \sum_{M=0} ^l \sum_{m= 1} ^{N_M} ( 1- \chi_{\widetilde{B_{j, m} ^M}}) F^{(l)} _M (\mathcal{L}, T) P_{\widetilde{B}_{j, m} ^M} (b- b_{B_j})  f    \right \|_{L^2}\\
    & \leq C t^{-Q(1/p - 1/2)}\, 2^{-2\gamma l N(1/p - 1/2)}\, 2^{l\varepsilon} 2^{l (1 + d_2)(1-1/p)}\, \|F^{(l)}\|_{L^2}\, \|P_{\widetilde{B}_{j} } (b- b_{B_j})  f \|_{L^p} .
\end{align*}

Above estimates together with H\"older's inequality and (\ref{Inequality: BMO norm calculation}),  we conclude that
\begin{align}
\label{Estimate: g_{j,<l} for S_{21} error part}
   & \left \| P_{4B_j} \sum_{M= 0} ^l \sum_{m= 1} ^{N_M}(1 - \chi_{\widetilde{B_{j, m} ^M}}) \Tilde{g}_{j, m} ^M \right \|_{L^q}\\
   & \nonumber \leq C (2^l t)^{Q(1/q - 1/2)}\,  t^{-Q(1/p - 1/2)}\, 2^{-2\gamma l N(1/p - 1/2)}\, 2^{l\varepsilon} 2^{l (1 + d_2)(1-1/p)}\, \|F^{(l)}\|_{L^2}\, \|P_{\widetilde{B}_{j} } (b- b_{B_j})  f \|_{L^p} \\
   & \nonumber \leq C (2^l t)^{Q(1/q - 1/2)}\,  t^{-Q(1/p - 1/2)}\, 2^{-2\gamma l N(1/p - 1/2)}\, 2^{l\varepsilon} 2^{l (1 + d_2)(1-1/p)}\, 2^{-l\beta} \|F\|_{W^2 _{\beta}}\, \\
    & \nonumber \hspace{9cm} \,(2^l t)^{Q(1/p - 1/q)}\, \|b\|_{BMO^{\varrho}(\mathbb{R}^d)} \|P_{\widetilde{B}_j}f\|_{L^q}\\
    & \nonumber \leq C  2^{-l \varepsilon}\,\|F\|_{W^2 _{\beta}}\, \|b\|_{BMO^{\varrho}(\mathbb{R}^d)} \,\|P_{\widetilde{B}_j}f\|_{L^q},
\end{align}
by choosing $N$ large enough.

Therefore as $\beta>d(1/p-1/2)$, combining (\ref{Estimate: g_{j,>l} for S_{21}}), (\ref{Estimate: g_{j, <l} for S_{21} main part}) and (\ref{Estimate: g_{j,<l} for S_{21} error part}), we get
\begin{align*}
    S_{21} &\leq C 2^{-l\varepsilon} \|b\|_{BMO^{\varrho}(\mathbb{R}^d)} \|F\|_{W^2 _{\beta}}\, \|f \|_{L^q}.
\end{align*}

\underline{\emph{Estimation of $S_{22}$}:} 
\underline{Case I:} Assume $|x_j'| \geq 2^{l+2} t$. 

Our argument is similar to Case I of $S_{12}$. Therefore using H\"older's inequality, \Cref{Theorem: Restriction Estimate}, (\ref{Inequality: Calculation with cutoff phi}), H\"older's inequality and (\ref{Inequality: BMO norm calculation}) we have
\begin{align*}
     & \|P_{4 B_j}\,  ((1-\psi) F^{(l)})(t\sqrt{\mathcal{L}}) P_{\widetilde{B}_j}(b - b_{B_j}) f\|_{L^q} \\
     & \leq |B(x_j, 2^{l+2} t)|^{1/q - 1/2} \| P_{4 B_j}\, ((1-\psi) F^{(l)})(t\sqrt{\mathcal{L}}) P_{\widetilde{B}_j}(b - b_{B_j}) f \|_{L^2} \\
     &\leq C(2^{l} t)^{(d_1+ d_2)(1/q - 1/2)} |x_j'|^{d_2(1/q - 1/2)} t^{ -(d_1 + d_2) (1/p - 1/2)}\, |x_j'|^{-d_2 (1/p -1/2)}\\
     & \hspace{3cm} 2^{-Nl} \|  F \|_{L^2} (2^{l} t)^{(d_1+d_2)(1/p - 1/q)} |x_j'|^{d_2(1/p - 1/q)}\,\|b\|_{BMO^{\varrho}(\mathbb{R}^d)} \|P_{\widetilde{B}_j} f\|_{L^q}\\
     &\leq C 2^{ld(1/p -1/2)}\, 2^{-Nl}\, \, \|  F \|_{L^2}\, \|b\|_{BMO^{\varrho}(\mathbb{R}^d)} \|P_{\widetilde{B}_j} f\|_{L^q} .
\end{align*}
So that choosing $N$ large enough we get
\begin{align*}
    S_{22} &\leq C 2^{-l\varepsilon} \|b\|_{BMO^{\varrho}(\mathbb{R}^d)} \|  F \|_{L^2}\, \| f\|_{L^q} .
\end{align*}

\underline{Case II:} Assume $|x_j'| < 2^{l+2} t$.

As in Case II of $S_{12}$ we write
\begin{align*}
    & P_{4 B_j}\, ((1-\psi) F^{(l)})(t\sqrt{\mathcal{L}}) P_{\widetilde{B}_j} (b - b_{B_j})\, f \\
    &= P_{4 B_j}\,  \Bigl(\sum_{\iota \geq 0} + \sum_{\iota \leq -6}\Bigr)  \, \left( \sum_{M=0 } ^l + \sum_{M=l+ 1 } ^{\infty}\right) F^{(l,\iota)}_M (\mathcal{L}, T) P_{\widetilde{B}_j} (b - b_{B_j})f \\
    &=:\Bigl(\sum_{\iota \geq 0} + \sum_{\iota \leq -6}\Bigr) \Tilde{g}_{j, \leq l} ^{\iota} + \Bigl(\sum_{\iota \geq 0} + \sum_{\iota \leq -6}\Bigr) \Tilde{g}_{j, > l} ^{\iota}.
\end{align*}

As (\ref{Estimate of S_{12} case II second term}) and (\ref{Estimate: g_{j,>l} for S_{12}}) we can estimate the second term. Indeed, using H\"older's inequality, \Cref{Theorem: Restriction Estimate}, (\ref{Inequality: Calculation with cutoff phi}), and again H\"older's inequality along with (\ref{Inequality: BMO norm calculation}), we get
\begin{align}
\label{Estimate: g_{j,>l} for S_{22}}
   & \left\|\Bigl(\sum_{\iota \geq 0} + \sum_{\iota \leq -6}\Bigr) \Tilde{g}_{j, > l} ^{\iota} \right\|_{L^q}\\
   & \nonumber \leq C(2^l t)^{Q(1/q - 1/2)}\,   \Bigl(\sum_{\iota \geq 0} + \sum_{\iota \leq -6}\Bigr)(2^{\iota} t^{-1})^{Q(1/p - 1/2)}\, 2^{-ld_2 (1/p - 1/2)}\,  2^{-N(l + \max{\{\iota,0}\})}\\
   & \nonumber \hspace{7cm} \| F \|_{L^2} \,(2^l t)^{Q(1/p - 1/q)}\,\|b\|_{BMO^{\varrho}(\mathbb{R}^d)} \|P_{\widetilde{B}_j} f\|_{L^q}\\
   & \nonumber \leq C  2 ^{ld(1/p - 1/2)}\, 2^{-Nl} \|F\|_{L^2} \, \|b\|_{BMO^{\varrho}(\mathbb{R}^d)} \|P_{\widetilde{B}_j} f\|_{L^q} \\
   & \nonumber \leq C 2^{-l \varepsilon} \|F\|_{L^2} \, \|b\|_{BMO^{\varrho}(\mathbb{R}^d)} \|P_{\widetilde{B}_j} f\|_{L^q} ,
\end{align}
by choosing $N$ sufficiently large.

For the first term $  \Bigl(\sum_{\iota \geq 0} + \sum_{\iota \leq -6}\Bigr) \Tilde{g}_{j, \leq l} ^{\iota}$, with the same notation as in Case I of $S_{11}$, let us set $\Tilde{g}_{j, m} ^{\iota,M} = P_{4 B_j} \, F^{(l,\iota)}_M (\mathcal{L}, T) P_{\widetilde{B}_{j, m} ^M} (b - b_{B_j}) f $ and write 
\begin{align*}
   \Bigl(\sum_{\iota \geq 0} + \sum_{\iota \leq -6}\Bigr) \Tilde{g}_{j, \leq l} ^{\iota} = \Bigl(\sum_{\iota \geq 0} + \sum_{\iota \leq -6}\Bigr) \sum_{M= 0} ^l \sum_{m= 1} ^{N_M} \chi_{\widetilde{B_{j, m} ^M}} \Tilde{g}_{j, m} ^{\iota,M} + \Bigl(\sum_{\iota \geq 0} + \sum_{\iota \leq -6}\Bigr) \sum_{M= 0} ^l \sum_{m= 1} ^{N_M}(1 - \chi_{\widetilde{B_{j, m} ^M}}) \Tilde{g}_{j, m} ^{\iota,M} .
\end{align*}

Choose $r$ such that $p < r < q < 2$. Then using H\"older's inequality, \Cref{Theorem: Restriction Estimate}, (\ref{Inequality: Calculation with cutoff phi}) and again H\"older's inequality with (\ref{Inequality: BMO norm calculation}),  we have
\begin{align*}
    & \left\| \Bigl(\sum_{i \geq 0} + \sum_{i \leq -6}\Bigr) \chi_{\widetilde{B_{j, m} ^M}} \Tilde{g}_{j, m} ^{\iota,M} \right\|_{L^q}\\
    & \leq C (2^l t)^{d_1(1/q - 1/2)}\, (2^{\gamma l} 2^M t \cdot 2^l t)^{d_2(1/q - 1/2)} \left\| \Bigl(\sum_{\iota \geq 0} + \sum_{\iota \leq -6}\Bigr) F^{(l,\iota)}_M (\mathcal{L}, T) P_{\widetilde{B}_{j, m} ^M}(b - b_{B_j}) f \right\|_{L^2}\\
    & \leq C  (2^l t)^{d_1(1/q - 1/2)}\, (2^{\gamma l} 2^M t \cdot 2^l t)^{d_2(1/q - 1/2)} \Bigl(\sum_{\iota \geq 0} + \sum_{\iota \leq -6}\Bigr) (2^{\iota} t^{-1})^{Q(1/p - 1/2)} 2^{-Md_2(1/p - 1/2)}\,  \\
    & \hspace{6cm} \|\phi_{\iota} F^{(l)}\|_{L^{\infty}}\, \|P_{4B_j} (b - b_{B_j}) \|_{L^{\frac{rp}{r-p}}}  \, \|P_{\widetilde{B}_{j, m} ^M} f\|_{L^r}  \\
    & \leq C  (2^l t)^{d_1(1/q - 1/2)}\, (2^{\gamma l} 2^M t \cdot 2^l t)^{d_2(1/q - 1/2)} \Bigl(\sum_{\iota \geq 0} + \sum_{\iota \leq -6}\Bigr) (2^{\iota} t^{-1})^{Q(1/p - 1/2)} 2^{-Md_2(1/p - 1/2)}\,  \\
    & 2^{-N(l + \max{\{\iota ,0}\})} \|  F \|_{L^2}\, (2^l t )^{Q(1/p -1/r)}\, \|b\|_{BMO^{\varrho}(\mathbb{R}^d)} (2^l t)^{d_1(1/r - 1/q)}\, (2^M t\cdot
    2^l t)^{d_2(1/r - 1/q)} \,\|P_{\widetilde{B}_{j, m} ^M} f\|_{L^q}\\
    &\leq C 2^{l\varepsilon} 2^{ld(1/r-1/2)} 2^{Q(1/p-1/r)} 2^{-Md_2(1/p-1/r)} 2^{-lN} \|F\|_{L^2}\, \|b\|_{BMO^{\varrho}(\mathbb{R}^d)}\, \|P_{\widetilde{B}_{j, m} ^M} f\|_{L^q} \\
    &\leq C 2^{l\varepsilon} 2^{ld(1/r-1/2)} 2^{-lN} \|F\|_{L^2}\, \|b\|_{BMO^{\varrho}(\mathbb{R}^d)}\, \|P_{\widetilde{B}_{j, m} ^M} f\|_{L^q} ,
\end{align*}
for some $\varepsilon>0$, by choosing $r$ very close to $p$ and $\gamma>0$ small enough before.
 
Hence as in (\ref{Estimate: g_{j, <l} for S_{12} main part}) we get
\begin{align}
\label{Estimate: g_{j, <l} for S_{22} main part}
 \left\| \Bigl(\sum_{\iota \geq 0} + \sum_{\iota \leq -6}\Bigr)  \sum_{M= 0} ^l \sum_{m= 1} ^{N_M} \chi_{\widetilde{B_{j, m} ^M}} g_{j, m} ^{\iota, M} \right\|_{L^q} ^q  & \leq C 2^{-lq\varepsilon} \,  \|b\|_{BMO^{\varrho}(\mathbb{R}^d)} ^q\, \|F\|_{L^2} ^q\,\|P_{\widetilde{B}_j} f\|_{L^q} ^q ,
\end{align}
by choosing $N$ large enough.

On the other hand, a similar argument as in the proof of (\ref{interpolation in remaindeder term for S_{12}}), shows that for all $1\leq p < 2$,
\begin{align*}
   &  \left \| \sum_{M= 0} ^l \sum_{m= 1} ^{N_M}(1 - \chi_{\widetilde{B_{j, m} ^M}})  F^{(l,\iota)}_M (\mathcal{L}, T) P_{\widetilde{B}_{j, m} ^M} (b- b_{B_j}) f \right \|_{L^2}\\
    &\leq C 2^{l\varepsilon }\, t^{-Q(1/p - 1/2)}\, 2^{\iota Q (1/p-1/2)} \, 2^{l d_2 (1-1/p)}\, \|\phi_{\iota} F^{(l)}\|_{L^{\infty}}\, \|P_{\widetilde{B}_{j} } (b- b_{B_j})f \|_{L^p} .
\end{align*}

The above estimate together with H\"older's inequality and (\ref{Inequality: Calculation with cutoff phi}) and again H\"older's inequality with (\ref{Inequality: BMO norm calculation}),  implies that
\begin{align}
\label{Estimate: g_{j,<l} for S_{22} error part}
    & \left \| P_{4B_j} \sum_{M=0} ^l \sum_{m= 1} ^{N_M} ( 1- \chi_{\widetilde{B_{j, m} ^M}}) F^{(l,\iota)} _M (\mathcal{L}, T) P_{\widetilde{B}_{j, m} ^M} (b - b_{B_j} )f  \right \|_{L^q}\\
    & \nonumber \leq (2^l t)^{Q(1/q - 1/2)}\, \Bigl(\sum_{\iota \geq 0} + \sum_{\iota \leq -6}\Bigr) \,2^{l\varepsilon }\, t^{-Q(1/p - 1/2)}\, 2^{\iota Q (1/p-1/2)} \, 2^{l d_2 (1-1/p)}\, \\
    & \nonumber\hspace{7cm} \|\phi_{\iota} F^{(l)}\|_{L^{\infty}}\, \|P_{\widetilde{B}_{j} }(b - b_{B_j}) f \|_{L^p}\\
   & \nonumber \leq C (2^l t)^{Q(1/q - 1/2)}\, \Bigl(\sum_{\iota \geq 0} + \sum_{\iota \leq -6}\Bigr)  2^{l\varepsilon }\,  t^{-Q(1/p - 1/2)} 2^{\iota Q (1/p-1/2)}  \, 2^{l d_2 (1-1/p)}\, 2^{-N(l + \max{\{\iota,0}\})} \,  \|F\|_{L^2}\, \\
   & \nonumber \hspace{8cm} \|P_{\widetilde{B}_j}(b - b_{B_j})\|_{L^{\frac{pq}{q-p}}} \|P_{\widetilde{B}_j} f\|_{L^q} \\
   & \nonumber \leq C (2^l t)^{Q(1/q - 1/2)}\,  2^{l\varepsilon }\,  t^{-Q(1/p - 1/2)}\,  2^{-Nl } \, 2^{l d_2 (1-1/p)}\, \|F\|_{L^2}\, \\
   & \nonumber \hspace{8cm} (2^l t) ^{Q(1/p - 1/q)}\,
   \|b\|_{BMO^{\varrho}(\mathbb{R}^d)}  \|P_{\widetilde{B}_j} f\|_{L^q} \\
   & \nonumber \leq C  2^{-l\varepsilon }\, \|F\|_{L^2}\, \|b\|_{BMO^{\varrho}(\mathbb{R}^d)}\, \|P_{\widetilde{B}_{j} } f \|_{L^q} ,
\end{align}
by choosing $N$ large enough.

Summarizing (\ref{Estimate: g_{j,>l} for S_{22}}), (\ref{Estimate: g_{j, <l} for S_{22} main part}) and (\ref{Estimate: g_{j,<l} for S_{22} error part}), we have
\begin{align*}
    S_{22} &\leq C 2^{-l\varepsilon} \|b\|_{BMO^{\varrho}(\mathbb{R}^d)} \|F\|_{L^2}\, \|f \|_{L^q}.
\end{align*}

$(4)$ \emph{Estimate of the commutator:}
 Aggregating all the estimates of $S_1$ and $S_2$, we get 
\begin{align*}
    \|[b, F^{(l)}(t\sqrt{\mathcal{L}})f]\|_{L^q} &\leq S_1^{1/q} + S_2^{1/q}\nonumber \\
    & \leq C 2^{-l \varepsilon} \|b\|_{BMO^{\varrho}(\mathbb{R}^d)} \|F\|_{W^2_{\beta}} \|f\|_{L^q} + C 2^{-l\varepsilon} \|b\|_{BMO^{\varrho}(\mathbb{R}^d)} \|  F \|_{L^2}\, \| f\|_{L^q}\nonumber \\
    & \leq C 2^{-l \varepsilon} \|b\|_{BMO^{\varrho}(\mathbb{R}^d)} \|F\|_{W^2_{\beta}} \|f\|_{L^q}\nonumber .
\end{align*}
Finally, summing over $l$, we conclude that for all $p< q<2$ and $\beta>d(1/p-1/2)$, 
\begin{align*}
    \|[b, F(t\sqrt{\mathcal{L}})f]\|_{L^q} & \leq C \sum_{l\geq 0} 2^{-l \varepsilon} \|b\|_{BMO^{\varrho}(\mathbb{R}^d)} \|F\|_{W^2_{\beta}} \|f\|_{L^q} \\
    & \leq C \|b\|_{BMO^{\varrho}(\mathbb{R}^d)} \|F\|_{W^2_{\beta}} \|f\|_{L^q},
\end{align*}
which completes the proof of the \Cref{Theorem: Multiplier for Commutator}. 
    
\end{proof}

\section{Compactness of Bochner-Riesz Commutator}

\begin{proof}[Proof of \Cref{Theorem: Compactness of Bochner-Riesz commutator}]
Let us set $F(\eta) = (1-\eta^2)_{+}^{\alpha}$. Recall that from (\ref{Equation: Dyadic decomposition}) we get $[b, F(\sqrt{\mathcal{L}})] = \sum_{l \geq 0} [b, F^{(l)} (\sqrt{\mathcal{L}})]$. Using (\ref{equation: inequality for F^l}) one can easily check that $\sum_{l =0} ^N [b, F^{(l)}(\sqrt{\mathcal{L}})]$ converges to $[b, F(\sqrt{\mathcal{L}})]$ in $L^q$-norm as $N \rightarrow \infty$. Therefore it is enough to show that for each $l\geq 0$ the operator $[b, F^{(l)}(\sqrt{\mathcal{L}})]$ is compact. Since $C_c ^{\infty} (\mathbb{R}^d)$ is dense in $CMO(\mathbb{R}^d)$, it suffices to show that for any arbitrary bounded set $\mathcal{F}$ in $L^q(\mathbb{R}^d)$ and $b \in C_c ^{\infty} (\mathbb{R}^d)$, the set $\mathcal{E}= \{ [b, F^{(l)}(\sqrt{\mathcal{L}})]f: f \in \mathcal{F}\}$ satisfies all the conditions (1), (2) and (3) of \Cref{Theorem: Characterizations of compactness}.

The condition (1) for $\mathcal{E}$ is straightforward. Indeed, from (\ref{equation: inequality for F^l}) we have  
\begin{align*}
   \sup_{f \in \mathcal{F}} \|[b, F^{(l)}(\sqrt{\mathcal{L}})]f \|_{L^q (\mathbb{R}^d)}
    & \leq C \, \sup_{f \in \mathcal{F}} \|b\|_{BMO} \|F\|_{W^2 _{\beta}} \|f\|_{L^q (\mathbb{R}^d)} < \infty.
\end{align*}

Before proceeding to the proof of the conditions (2) and (3), let us choose a function $\psi\in C_c ^{\infty} (-4,4)$ such that $\psi(\lambda) = 1$ on $(-2,2)$. Note that $1-\psi$ is supported outside $(-2,2)$, so we can choose $\phi \in C_c^{\infty}(2,8)$ such that $1-\psi(\lambda) = \sum_{\iota \geq 0} \phi(2^{-\iota }\lambda)$ for all $\lambda>0$. Then we may write $F^{(l)}(\lambda) = (\psi F^{(l)})(\lambda) + \sum_{\iota \geq 0} (\phi_{\iota} F^{(l)})(\lambda)$ where $\phi_{\iota}(\lambda) = \phi(2^{-\iota }\lambda)$ for $\iota>0$.

Using Sobolev embedding theorem one can easily check that
\begin{align}
\label{Calculation: F^{l} with psi}
    \|\psi F^{(l)}\|_{L^{\infty}_{N}} \leq C 2^{l(N+1/2+\varepsilon)} \| F \|_{L^2},
\end{align}
 and as in (\ref{Inequality: Calculation with cutoff phi}), for any $M>0$,
\begin{align}
\label{Calculation: F^{l} with phi}
     \|\phi_{\iota} F^{(l)}\|_{L^{\infty}_{N}} \leq C 2^{-M(l + \iota)} \| F \|_{L^2} .
\end{align}
As $C_c^{\infty}(\mathbb{R}^d)$ is dense in $L^q(\mathbb{R}^d)$, for $f \in \mathcal{F}$ and any $\varepsilon>0$ there exists $g \in C_c^{\infty}(\mathbb{R}^d)$ such that $\|f-g\|_{L^q} < \varepsilon$. 

Now let us first check the condition (3) for $\mathcal{E}$. Note that
\begin{align*}
    \|\chi_{\mathcal{F}_A} [b, F^{(l)}(\sqrt{\mathcal{L}})]f \|_{L^q} &\leq \|\chi_{\mathcal{F}_A} [b, F^{(l)}(\sqrt{\mathcal{L}})](f-g) \|_{L^q} + \|\chi_{\mathcal{F}_A} [b, F^{(l)}(\sqrt{\mathcal{L}})]g \|_{L^q} \\
    &\leq \varepsilon + \|\chi_{\mathcal{F}_A} [b, F^{(l)}(\sqrt{\mathcal{L}})]g \|_{L^q} .
\end{align*}
Therefore enough to show that $\|\chi_{\mathcal{F}_A} [b, F^{(l)}(\sqrt{\mathcal{L}})]g \|_{L^q} < \varepsilon$ for $g \in C_c^{\infty}(\mathbb{R}^d)$. As $b \in C_c ^{\infty} (\mathbb{R}^d)$, we may choose $j\in \mathbb{N}$ such that $\supp{b} \subseteq B(0, 2^j)$. Then for $\varrho (x, 0) > A = 2\cdot 2^{2j}$, we estimate the following.
\begin{align*}
     \|\chi_{\mathcal{F}_A} [b, F^{(l)}(\sqrt{\mathcal{L}})]g \|_{L^q} &\leq \|\chi_{\mathcal{F}_A} [b, (\psi F^{(l)})(\sqrt{\mathcal{L}})]g \|_{L^q} + \sum_{\iota \geq 0} \|\chi_{\mathcal{F}_A} [b, (\phi_{\iota} F^{(l)})(\sqrt{\mathcal{L}})]g \|_{L^q} .
 \end{align*}
Let us first estimate the second term. Note that
 \begin{align*}
     \|\chi_{\mathcal{F}_A} [b, (\phi_{\iota} F^{(l)})(\sqrt{\mathcal{L}})]g \|_{L^q (\mathbb{R}^d)} & \leq \Biggl\{  \int_{\varrho(x, 0) \geq 2\cdot 2^{2j}} \Bigg | \int_{B(0, 2^j)} K_{(\phi_{\iota} F^{(l)})(\sqrt{\mathcal{L}})} (x,y)\,  b(y)\, g(y)\, dy   \Bigg|^q dx \Biggl\}^{1/q}.
 \end{align*}
Now, for $y \in B(0, 2^j)$, using \Cref{Theorem: Weighted Plancherel with L infinity condition}, (\ref{Calculation: F^{l} with phi}) and \Cref{lemma: outside distance} we have 
\begin{align*}
   & \int_{\varrho(x, 0) > 2 \cdot 2^{2j}} | K_{ (\phi_{\iota} F^{(l)})(\sqrt{\mathcal{L}})} (x,y)|\, dx \\
   & \leq C \int_{\varrho(x, y) >  2^{2j}} \frac{(1+2^{\iota} \varrho(x,y))^N |K_{ (\phi_{\iota} F^{(l)})(\sqrt{\mathcal{L}})} (x,y)| }{(1+ 2^{\iota} \varrho(x,y))^N }\, dx\\
   &\leq C |B(y, 2^{-\iota})|^{-1}\, \|\phi_{\iota} F^{(l)}\|_{L^{\infty} _{N+1/2+ \varepsilon}}\, \int_{\varrho(x, y) > 2^{2j} } \frac{dx}{(1+ 2^{\iota} \varrho(x,y))^N}\\
   & \leq C |B(y, 2^{-\iota})|^{-1} 2^{-M(l+\iota)} \, 2^{-\iota N} 2^{2j(-N + d_1 + d_2)}\, \max\{2^{2j}, |y'|\}^{d_2}\\
   & \leq C 2^{\iota Q} 2^{-\iota M} 2^{j(-2N + 2d_1 + 4d_2)} ,
\end{align*}
for all $M> 0$ and $N>Q$.

On the other hand, using \Cref{Theorem: Weighted Plancherel with L infinity condition} and (\ref{Calculation: F^{l} with phi}) for any $M>0$,
\begin{align*}
    \int_{B(0, 2^{j})} |    K_{ (\phi_{\iota} F^{(l)})(\sqrt{\mathcal{L}})} (x,y)|\, dy & \leq C |B(0, 2^{j}) | |B(x, 2^{-\iota})|^{-1} \| \phi_{\iota} F^{(l)} \|_{L^{\infty}_{1/2+\varepsilon}} \\
    & \leq C 2^{jQ} 2^{\iota Q} 2^{-M(l+\iota)} \leq C 2^{jQ} 2^{\iota Q} 2^{-\iota M}.
\end{align*}
Then choosing $N$ sufficiently large and $M> Q$, by Schur's lemma(see \cite{Grafakos_Modern_Fourier_Analysis_2014}) for $1<q<\infty$, as $j \to \infty$ we get 
\begin{align}
\label{Inequality: Condition 3 for phi F}
    \sum_{\iota \geq 0} \|\chi_{\mathcal{F}_A} [b, (\phi_{\iota} F^{(l)})(\sqrt{\mathcal{L}})]g \|_{L^q (\mathbb{R}^d)} \leq C \|g\|_{L^q(\mathbb{R}^d)} 2^{jQ(1-\frac{1}{q})} 2^{j(-2N + 2d_1 + 4d_2)/q} \sum_{\iota \geq 0} 2^{-\iota (M-Q)} < \varepsilon .
\end{align}
Estimate for the first term is also similar. Using \Cref{Theorem: Weighted Plancherel with L infinity condition}, (\ref{Calculation: F^{l} with psi}), \Cref{lemma: outside distance} and taking $j>l$, we have
\begin{align*}
   \int_{\varrho(x, 0) > 2 \cdot 2^{2j}} | K_{ (\psi F^{(l)})(\sqrt{\mathcal{L}})} (x,y)|\, dx & \leq C \int_{\varrho(x, y) >  2^{2j}} \frac{(1+ \varrho(x,y))^N |K_{ (\psi F^{(l)})(\sqrt{\mathcal{L}})} (x,y)| }{(1+ \varrho(x,y))^N }\, dx\\
   &\leq C |B(y, 1)|^{-1}\, \|\psi F^{(l)}\|_{L^{\infty}_{N+1/2+ \varepsilon}}\, \int_{\varrho(x, y) > 2^{2j} } \frac{dx}{(1+ \varrho(x,y))^N}\\
   & \leq C |B(y, 1)|^{-1} 2^{l (N+1+ 2\varepsilon)}\, 2^{2j(-N + d_1 + d_2)}\, \max\{2^{2j}, |y'|\}^{d_2}\\
   & \leq C 2^{j(-N + 2d_1 + 4d_2+1+2\varepsilon)} .
\end{align*}
And using \Cref{Theorem: Weighted Plancherel with L infinity condition} and (\ref{Calculation: F^{l} with psi}) for $j>l$, we have 
\begin{align*}
    \int_{B(0, 2^{j})} | K_{ (\psi F^{(l)})(\sqrt{\mathcal{L}})} (x,y)|\, dy & \leq C |B(0, 2^{j}) | |B(x, 1)|^{-1} \| \psi F^{(l)} \|_{L^{\infty}_{1/2+\varepsilon}} \\
    & \leq C 2^{jQ} 2^{l(1+2\varepsilon)} \leq C 2^{j(Q+1+2\varepsilon)}.
\end{align*}
Therefor choosing $N$ sufficiently large, by Schur's lemma for $1<q<\infty$, as $j \to \infty$ we get
\begin{align}
\label{Inequality: Condition 3 for psi F}
    \|\chi_{\mathcal{F}_A} [b, (\psi F^{(l)})(\sqrt{\mathcal{L}})]g \|_{L^q(\mathbb{R}^d)} &\leq C \|g\|_{L^q(\mathbb{R}^d)} 2^{j(1+2\varepsilon)} 2^{jQ(1-\frac{1}{q})} 2^{j(-N + 2d_1 + 4d_2)/q} <\varepsilon .
\end{align}
Then combining estimates (\ref{Inequality: Condition 3 for phi F}) and (\ref{Inequality: Condition 3 for psi F}) as $A \to \infty$ we have 
\begin{align*}
    \|\chi_{\mathcal{F}_A} [b, F^{(l)}(\sqrt{\mathcal{L}})]g \|_{L^q(\mathbb{R}^d)} < \varepsilon.
\end{align*}
Therefore it remains to check only the condition (2) for $\mathcal{E}$. Again note that
\begin{align*}
    & \|[b, F^{(l)}(\sqrt{\mathcal{L}})] f(\cdot + t) - [b, F^{(l)}(\sqrt{\mathcal{L}})]f\|_{L^q} \\
    &\leq \|[b, F^{(l)}(\sqrt{\mathcal{L}})] (f-g)(\cdot + t) - [b, F^{(l)}(\sqrt{\mathcal{L}})](f-g)\|_{L^q} \\
    & \hspace{6cm} + \|[b, F^{(l)}(\sqrt{\mathcal{L}})] g(\cdot + t) - [b, F^{(l)}(\sqrt{\mathcal{L}})]g \|_{L^q} \\
    &\leq 2\|[b, F^{(l)}(\sqrt{\mathcal{L}})] (f-g)\|_{L^q} + \|[b, F^{(l)}(\sqrt{\mathcal{L}})] g(\cdot + t) - [b, F^{(l)}(\sqrt{\mathcal{L}})]g \|_{L^q} \\
    &\leq \varepsilon + \|[b, F^{(l)}(\sqrt{\mathcal{L}})] g(\cdot + t) - [b, F^{(l)}(\sqrt{\mathcal{L}})]g \|_{L^q} .
\end{align*}
Therefore enough to verify condition (2) for $g \in C_c^{\infty}(\mathbb{R}^d)$. We can write 
\begin{align*}
  & [b, F^{(l)}(\sqrt{\mathcal{L}})] g(x + t) - [b, F^{(l)}(\sqrt{\mathcal{L}})]g (x) \\
  & = \int_{\varrho(x,y) > 2 \varepsilon^{-1} \varrho(x+t, x)} \{b(x+t) - b(x)\} K_{F^{(l)}(\sqrt{\mathcal{L}})} (x, y) g(y)\, dy\\
  & + \int_{\varrho(x,y) > 2 \varepsilon^{-1} \varrho(x+t, x)} \{b(y) - b(x+t)\}\{ K_{F^{(l)}(\sqrt{\mathcal{L}})} (x, y) - K_{F^{(l)}(\sqrt{\mathcal{L}})} (x+ t, y)\}  g(y)\, dy\\
  & +  \int_{\varrho(x,y) \leq 2 \varepsilon^{-1} \varrho(x+t, x)} \{b(y) - b(x)\} K_{F^{(l)}(\sqrt{\mathcal{L}})} (x, y) g(y)\, dy\\
  & -  \int_{\varrho(x,y) \leq 2 \varepsilon^{-1} \varrho(x+t, x)} \{b(y) - b(x+t)\} K_{F^{(l)}(\sqrt{\mathcal{L}})} (x+t, y) g(y)\, dy\\
  &=: J_1 + J_2 + J_3 + J_4.
\end{align*}
In the following, we estimate each $J_i$ separately for $i=1,2,3,4$.

\underline{\emph{Estimate of $J_1$}}:
Since $b \in C_c ^{\infty} (\mathbb{R}^d)$, by \Cref{Theorem: Mean value theorem} we have $| b(x + t) - b(x) | \leq C \varrho(x+t, x)$. Then we can write
\begin{align*}
    |J_1| &\leq C \varrho(x+t, x) \left[\int_{\varrho(x,y) > 2 \varepsilon^{-1} \varrho(x+t, x)} |K_{(\psi F^{(l)})(\sqrt{\mathcal{L}})} (x, y)| |g(y)|\, dy \right. \\
    & \hspace{6cm} + \left. \sum_{\iota \geq 0} \int_{\varrho(x,y) > 2 \varepsilon^{-1} \varrho(x+t, x)} |K_{(\phi_{\iota} F^{(l)})(\sqrt{\mathcal{L}})} (x, y)| |g(y)|\, dy \right] \\
    &=:J_{11} + J_{12} .
\end{align*}
For $0<\tau<1$, using \Cref{Theorem: Weighted Plancherel with L infinity condition} and (\ref{Calculation: F^{l} with phi}),
\begin{align*}
    |J_{12}| &\leq C \varrho(x+t, x) \sum_{\iota \geq 0} \int_{\varrho(x,y) > 2 \varepsilon^{-1} \varrho(x+t, x)} |K_{(\phi_{\iota} F^{(l)})(\sqrt{\mathcal{L}})} (x, y)| |g(y)|\, dy \\
    &\leq C \varrho(x+t, x) \sum_{\iota \geq 0} \sum_{k=1}^{\infty} \int_{2^{k} \varepsilon^{-1} \varrho(x+t, x)< \varrho(x, y) \leq 2^{k+1} \varepsilon^{-1} \varrho(x+t, x)} \\
    & \hspace{5cm} \frac{(1+2^{\iota} \varrho(x,y))^{Q+\tau} |K_{(\phi_{\iota} F^{(l)})(\sqrt{\mathcal{L}})} (x, y)|}{(1+2^{\iota}\varrho(x,y))^{Q+\tau}} |g(y)|\, dy \\
    &\leq C \varrho(x+t, x) \sum_{\iota \geq 0} \sum_{k=1} ^{\infty} \frac{|B(x,2^{-\iota})|^{-1} \|\phi_{\iota} F^{(l)}\|_{L^{\infty}_{Q + \tau +1/2+ \varepsilon} } |B(x, 2^{k+1} \varepsilon^{-1} \varrho(x+t, x))|  }  {\{2^{\iota} 2^k \varepsilon^{-1} \varrho(x+t, x)\}^{Q + \tau}} \times \\
    & \hspace{5cm} \frac{1}{|B(x, 2^{k+1} \varepsilon^{-1} \varrho(x+t, x))| }\, \int_{\varrho(x, y) \leq 2^{k+ 1} \varepsilon^{-1} \varrho(x+t, x) } |g(y)|\, dy\\
    & \leq C \varrho(x+t, x) \sum_{\iota \geq 0} \sum_{k=1} ^{\infty} \frac{2^{\iota Q} 2^{-M(l+\iota)} \bigl( 2^{k+1} \varepsilon^{-1} \varrho(x+t, x) \bigr)^Q }  {\{ 2^{\iota} 2^k \varepsilon^{-1} \varrho(x+t, x)\}^{Q + \tau}}  \mathcal{M}g(x)   \\
    & \leq  C \varepsilon^{\tau} \varrho(x+t, x)^{1-\tau} \mathcal{M}g(x) \left(\sum_{k=1} ^{\infty} \frac{1}{2^{k\tau}} \right) \left(\sum_{\iota \geq 0} \frac{1}{2^{\iota(M+\tau)}}  \right) \\
    & \leq C \varepsilon^{\tau} \varrho(x+t, x)^{1-\tau} \mathcal{M}g(x),
\end{align*}
by choosing $M>0$.

Estimate of $J_{11}$ is similar. Using \Cref{Theorem: Weighted Plancherel with L infinity condition} and (\ref{Calculation: F^{l} with psi}) for $0<\tau<1$ we have $|J_{11}| \leq C \varepsilon^{\tau} 2^{l(Q +\tau + 1 + 2\varepsilon)} \varrho(x+t, x)^{1-\tau} \mathcal{M}g(x)$. Therefore combining both estimates for fixed $l$, as $t \to 0$ we get
\begin{align*}
    \|J_1\|_{L^q(\mathbb{R}^d)} &\leq C\varepsilon \|\mathcal{M}g\|_{L^q(\mathbb{R}^d)} \leq C \varepsilon \, \|g\|_{L^q(\mathbb{R}^d)} \leq C \varepsilon.
\end{align*}

\underline{\emph{Estimate of $J_2$}}:
Note that $|b(y) - b(x+t)| \leq 2 \|b\|_{L^{\infty}} \leq C$. As in $J_1$ here we also break $J_2$ into two parts $J_{21}$ and $J_{22}$ corresponding to $\psi$ and $\sum_{\iota \geq 0} \phi_{\iota}$. Then using \Cref{Theorem: Mean value theorem}, \Cref{Theorem: Weighted Plancherel with L infinity condition}, (\ref{Calculation: F^{l} with phi}), for $0 < \tau < 1$, as estimated in $J_{12}$, for $J_{22}$ with $M>1$ we get 
\begin{align*}
    &|J_{22}| \leq C \sum_{\iota \geq 0} \int_{\varrho(x, y) > 2 \varepsilon^{-1} \varrho(x+t, x)} \Big| K_{(\phi_{\iota} F^{(l)})(\sqrt{\mathcal{L}})} (x, y) -K_{(\phi_{\iota} F^{(l)})(\sqrt{\mathcal{L}})} (x+t, y) \Big| \, \big|g(y)\big| \, dy\\
    & \leq C \varrho(x+t, x) \sum_{\iota \geq 0} \sum_{k=1} ^{\infty} \int_{2^{k} \varepsilon^{-1} \varrho(x+t, x)< \varrho(x, y) \leq 2^{k+1} \varepsilon^{-1} \varrho(x+t, x)} \int_{0}^{1} \\
    &  \hspace{4cm} \frac{(1+2^{\iota} \varrho(\gamma_0 (s),y))^{Q+\tau} |X K_{(\phi_{\iota} F^{(l)})(\sqrt{\mathcal{L}})} (\gamma_{0}(s), y)|}{(1+2^{\iota} \varrho(\gamma_0 (s),y))^{Q+\tau}} |g(y)|\, dy \ ds \\
    & \leq C \varepsilon^{\tau} \varrho(x+t, x)^{1-\tau} \mathcal{M}g(x),
\end{align*}
where we have used the fact that for $t$ small, $\varrho(x,y) \sim \varrho(\gamma_0(s), y)$.

For $J_{21}$ similarly we get $|J_{21}| \leq C \varepsilon^{\tau} 2^{l(Q +\tau +1+ 2\varepsilon)} \varrho(x+t, x)^{1-\tau} \mathcal{M}g(x)$.
Therefore as $t \to 0$,
\begin{align*}
    \|J_2\|_{L^q(\mathbb{R}^d)} &\leq C \varepsilon \|\mathcal{M}g\|_{L^q(\mathbb{R}^d)} \leq C \varepsilon .
\end{align*}

\underline{\emph{Estimate of $J_3$}}:
Again as $J_1$ we write $J_3$ as sum of $J_{31}$ and $J_{32}$. Since $b \in C_c ^{\infty} (\mathbb{R}^d)$, we have $|b(y) - b(x)| \leq C \varrho(x, y)$. Using this fact, \Cref{Theorem: Weighted Plancherel with L infinity condition} and (\ref{Calculation: F^{l} with phi}) we see that 
\begin{align*}
   |J_{32}| &\leq C \sum_{\iota \geq 0} \int_{\varrho(x, y) \leq \varepsilon^{-1} \varrho(x+t, x)} \varrho(x, y) |K_{(\phi_{\iota} F^{(l)})(\sqrt{\mathcal{L}})} (x, y)| |g(y)|\, dy \\
   &\leq  C \varepsilon^{-1} \varrho(x+t,x) \sum_{\iota \geq 0} |B(x,2^{-\iota})|^{-1}\, \|\phi_{\iota} F^{(l)}\|_{L^{\infty}_{1/2+\varepsilon}}\, |B(x,2 \varepsilon^{-1} \varrho(x+t, x)) | \\
   & \hspace{5cm} \frac{1}{|B(x,2 \varepsilon^{-1} \varrho(x+t, x)) |} \int_{B(x, 2 \varepsilon^{-1} \varrho(x+t, x))}|g(y)| \, dy\\
   &\leq C \varepsilon^{-1} \varrho(x+t,x) \sum_{\iota \geq 0} 2^{\iota Q} 2^{-M(l+\iota)} (2 \varepsilon^{-1} \varrho(x+t, x))^Q\, \mathcal{M}g(x) \\
   & \leq C (\varepsilon^{-1} \varrho(x+t, x))^{Q+1} \, \mathcal{M}g(x) ,
\end{align*}
by choosing $M>Q$.

Similarly we also get $|J_{31}| \leq C 2^{l(1+2\varepsilon)} (\varepsilon^{-1} \varrho(x+t, x))^{Q+1} \, \mathcal{M}g(x)$. Therefore combining both the estimates as $t \to 0$ we get, 
\begin{align*}
   \|J_3\|_{L^q(\mathbb{R}^d)} \leq C \varepsilon \| \mathcal{M}g \|_{L^q(\mathbb{R}^d)} \leq C \varepsilon \|g \|_{L^q(\mathbb{R}^d)} \leq C \varepsilon.
\end{align*}

\underline{\emph{Estimate of $J_4$}}:
By \Cref{Theorem: Mean value theorem} and triangle inequality, we have 
 \begin{align*}
     &| b(x+ t) - b(y)| \leq  C \varrho (x+t, y) \leq C \{\varrho (x+t, x) + \varrho (x, y)\}.
 \end{align*}
Therefore similarly as in $J_3$ as $t \to 0$ we get
\begin{align*}
    \|J_4\|_{L^q(\mathbb{R}^d)}& \leq C 2^{l(1/2+\varepsilon)} (\varepsilon^{-1} \varrho(x+t, x))^{Q+1} \|\mathcal{M} g\|_{L^q(\mathbb{R}^d)} \leq C \varepsilon \|g\|_{L^q(\mathbb{R}^d)} \leq  C\varepsilon.
\end{align*}
Finally, gathering all the  estimates of $J_1, J_2, J_3$ and $J_4$ we conclude that 
\begin{align*}
   \lim_{t \rightarrow 0} \|[b, F^{(l)}(\sqrt{\mathcal{L}})] f(\cdot + t) - [b, F^{(l)}(\sqrt{\mathcal{L}})]f\|_{L^q(\mathbb{R}^d)} = 0 \,\,\, \mbox{uniformly}\,\,\, \mbox{in}\, \, \, f\in \mathcal{F}.
\end{align*}
This completes the proof of \Cref{Theorem: Compactness of Bochner-Riesz commutator}.
\end{proof}

\section*{Acknowledgements}
The first author was supported from his NBHM post-doctoral fellowship, DAE, Government of India. The second author would like to acknowledge the support of the Prime Minister's Research Fellows (PMRF) supported by Ministry of Education, Government of India. We would like to thank Sayan Bagchi for his careful reading of this article and several useful discussions.


\newcommand{\etalchar}[1]{$^{#1}$}
\providecommand{\bysame}{\leavevmode\hbox to3em{\hrulefill}\thinspace}
\providecommand{\MR}{\relax\ifhmode\unskip\space\fi MR }
\providecommand{\MRhref}[2]{%
  \href{http://www.ams.org/mathscinet-getitem?mr=#1}{#2}
}
\providecommand{\href}[2]{#2}

\end{document}